\def\Bscr{\mathcal{B}}
\def\Dscr{\mathcal{D}}
\def\Gscr{\mathcal{G}}
\def\Iscr{\mathcal{I}}
\def\Mscr{\mathcal{M}}
\def\Oscr{\mathcal{O}}
\def\Pscr{\mathcal{P}}
\def\Uscr{\mathcal{U}}
\def\Vscr{\mathcal{V}}
\newcommand{\tond}[1]{{\left(#1\right)}}
\newcommand{\quadr}[1]{{\left[#1\right]}}
\newcommand{\inter}[1]{{\langle#1\rangle}}
\newcommand{\graff}[1]{{\left\{#1\right\}}}
\newcommand{\norm}[1]{\left\|#1\right\|}
\newcommand{\opnorm}[1]{\left\|#1\right\|_{\textsf{op}}}
\newcommand{\matext}[1]{#1_{\#}}
\newcommand{\matred}[1]{#1_{\flat}}
\DeclareMathOperator{\dist}{dist}
\def\naturali{\mathbb{N}}
\def\interi{\mathbb{Z}}
\def\reali{{\mathbb{R}}}
\def\complessi{{\mathbb{C}}}
\def\Mat{{\textsf{Mat}}}
\def\toro{\mathbb{T}}
\def\Id{{\rm Id}}
\def\rmI{{\rm I}}
\def\rmII{{\rm II}}
\def\rmIII{{\rm III}}
\def\rmIV{{\rm IV}}
\def\epsilon{\varepsilon}
\def\imunit{{\bf i}}
\def\lie#1{L_{#1}}
\def\Pset{{\cal P}}
\def\realpart{\mathop{\rm Re}\nolimits}
\def\imaginary{\mathop{\rm Im}\nolimits}
\def\prsca#1#2{\langle #1 , #2 \rangle}
\def\parder#1#2{{\partial#1 \over \partial#2}}
\newtheorem{theorem}{Theorem}[section]
\newtheorem{theorem*}{Theorem}
\newtheorem{lemma}{Lemma}[section]
\newtheorem{corollary}{Corollary}[section]
\newtheorem{proposition}{Proposition}[section]
\newtheorem{remark}{Remark}[section]
\title{On the continuation of degenerate periodic orbits\\ via normal form:
       lower dimensional resonant tori}
\author[1]{M. Sansottera}
\author[1]{V. Danesi}
\author[1]{T. Penati}
\author[1]{S. Paleari}
\address[1]{Department of Mathematics, University of Milan, via
  Saldini 50, 20133 --- Milan, Italy.}
\begin{document}

\begin{abstract}
We consider the classical problem of the continuation of periodic
orbits surviving to the breaking of invariant lower dimensional
resonant tori in nearly integrable Hamiltonian systems.  In particular
we extend our previous results (presented in CNSNS, 61:198-224, 2018)
for full dimensional resonant tori to lower dimensional ones.  We
develop a constructive normal form scheme that allows to identify and
approximate the periodic orbits which continue to exist after the
breaking of the resonant torus.  A specific feature of our algorithm
consists in the possibility of dealing with degenerate periodic
orbits. Besides, under suitable hypothesis on the spectrum of the
approximate periodic orbit, we obtain information on the linear
stability of the periodic orbits feasible of continuation. A
pedagogical example involving few degrees of freedom, but connected to
the classical topic of discrete solitons in dNLS-lattices, is also
provided.
\end{abstract}

\begin{keyword}
Hamiltonian normal forms \sep lower dimensional resonant tori \sep
degenerate periodic orbits \sep linear stability
\end{keyword}

\maketitle

\section{Introduction}
\label{sec:introduction}

Consider a canonical system of differential equations with Hamiltonian
\begin{equation}
H(I,\varphi,\xi,\eta,\epsilon) = H_0(I,\xi,\eta) +\epsilon H_1(I,\varphi,\xi,\eta;\epsilon)\ ,\quad
H_0=h_0(I)+g_0 (\xi,\eta)\ ,
\label{frm:H-modello}
\end{equation}
in $n=n_1+n_2$ degree of freedom where
$(I,\varphi)\in\Uscr(I^*)\times\toro^{n_1}$ are angle-action variables
defined in a neighbourhood $\Uscr(I^*)\subset\reali^{n_1}$ of the
action $I^*$, $(\xi,\eta)\in\Vscr(0)$ are Cartesian variables defined
in a neighbourhood $\Vscr(0)\subset \complessi^{2n_2}$ of the origin
and $\epsilon$ is a small parameter. The
Hamiltonian~\eqref{frm:H-modello} is assumed to be analytic in all
variables and in the small parameter $\epsilon$.  The unperturbed
Hamiltonian $H_0$ is assumed to be the sum of a generic Hamiltonian
$h_0(I)$ that can be expanded in power series of $J=I-I^*$ as
\begin{equation}
h_0(J;I^*) = \langle \hat\omega(I^*), J\rangle + \frac{1}{2} \langle
D_I^2h_0(I^*)J,J\rangle + h.o.t.\ ,\qquad\hbox{with}\quad
\hat\omega(I^*)=\parder{h_0}{I}\bigg|_{I=I^*}\ ,
\label{frm:h0expansion}
\end{equation}
and an Hamiltonian $g_0(\xi,\eta)$ that has an elliptic equilibrium at the origin, i.e.,
\begin{equation}
  g_0(\xi,\eta) = \sum_{j=1}^{n_2} \imunit\Omega_j \xi_j \eta_j + h.o.t.\ .
  \label{frm:g0expansion}
\end{equation}

In this paper we investigate the problem of the continuation of
periodic orbits which survive to the breaking of a completely resonant
$n_1$-dimensional torus $I^*$ of~\eqref{frm:H-modello}.  A
  typical example is provided by physical models described by a
  Hamiltonian ~\eqref{frm:H-modello} made by identical and weakly
  coupled nonlinear oscillators (see for example the editorial
  review~\cite{PalP19} on Hamiltonian Lattices), with $n_1$~ones that
  have been excited and oscillate periodically with the same
  frequencies $\hat\omega(I^*)$ and $n_2$~ones are at rest.

The existence of sub-tori surviving to $n_1$-dimensional
  partially resonant tori has been widely treated in the
  literature; see, e.g.,~\cite{Gra74,Tre91,CheW99}, where some
nondegeneracy assumptions on one or both of the Hessian
$D^2_Ih_0(I^*)$ (Kolmogorov nondegeneracy) and of the critical points
of the time-averaged\footnote{Notice that $H_1$ is turned into a
  function of time once evaluated on the periodic flow given by the
  unperturbed resonant torus $I^*$; hence it can be time-averaged over
  the period $T$ of the unperturbed flow.}  perturbation
$\inter{H_1}_T$ (Poincar\'e nondegeneracy) are assumed. Other more
recent works investigate the problem when degeneracy of
$D^2_Ih_0(I^*)$ occurs, like in
\cite{VoyI99,LiYi03,HanLiYi06,HanLiYi10} or in the recent works
\cite{XuLiYi17,XuLiYi18}, where degeneracy is due to different time
scales in the integrable Hamiltonian (like in problems of Celestial
Mechanics).

Differently from the previous literature, our attention is on the
problem of continuation of periodic orbits when Poincar\'e
nondegeneracy does not hold; this typically happens when critical
points of $\inter{H_1}_T$ are not isolated, being part of a
$d$-parameter family. Our perspective is then to look for a normal
form construction which allows us to inspect the Poincar\'e
degeneracy, in the case of completely resonant lower dimensional tori:
such a perturbation approach is able to identify unperturbed periodic
orbits which are candidates for continuation at $\epsilon\neq 0$, as
well as to show the structure of the linear dynamics around those
orbits. In this sense, the results here included represent the natural
generalization of \cite{PenSD18}, where the same problem was faced
limiting to resonant tori of maximal dimension, thus extending the
original ideas of Poincar\'e, see~\cite{PoiI,PoiOU7}. An informal
statement that sums up our results is the following

\begin{quote}  
Consider the Hamiltonian~\eqref{frm:H-modello} with $H_0$ as specified
in~\eqref{frm:h0expansion} and~\eqref{frm:g0expansion}.  Take an
unperturbed resonant torus carrying periodic orbits with frequency
$\omega$, where $\omega$ is such that $\hat\omega(I^*) = \omega k$
with $k\in\interi^{n_1}$.  Assume that the frequency $\omega$ and
the transverse frequency vector $\Omega=\{\Omega_j\}$ are
strongly nonresonant and satisfy the first and second Melnikov
condition.  Assume also that $h_0(I)$ is nondegenerate.  Then, there
exists $\epsilon^*>0$ such that for $|\epsilon|<\epsilon^*$ the
following statements hold true:
\begin{itemize}
\item the Hamiltonian can be put in normal form up to a finite
  arbitrary order $r$ by means of an analytic
  canonical transformation. \emph{Typically} the normal form allows to
  identify isolated approximate periodic orbits that can survive the
  breaking of the unperturbed torus;
\item under suitable assumptions on the spectrum of the approximate
  monodromy matrix, given by the truncated normal form, the
  approximate periodic orbit can be continued for $\epsilon\neq0$;
\item under stricter conditions on the spectrum, the linear
  stability of the true periodic orbit can be inferred from the
  approximate one.
\end{itemize}
\end{quote}

In order to illustrate our original approach, we propose a pedagogical
example with few degrees of freedom, which is inspired to the problem
of the existence of discrete solitons in discrete NLS models.  The
example is described in the following subsection, and is analyzed in
detail in Section~\ref{s:appl}; the corresponding calculations have
been developed with the help of Mathematica Software. Applications of
the normal form to proper dNLS models, with a sufficiently large
number of sites and a suitable variety of degenerate spatially
localized configurations (like vortexes, multi-peaked solutions or
different resonances), would require a systematic investigation with
longer algebraic manipulations; this will be the object of a distinct
and subsequent publication.

\subsection{The \emph{seagull} example}\label{sbs:1.1}

Consider a system of $5$ coupled anharmonic oscillators with Hamiltonian
\begin{equation}
  \label{e.ex.dnls}
  H = H_{0}+\varepsilon H_{1} = \sum_{j=-2}^{2}\Biggl( \frac{x_j^2 +
    y_j^2}{2} + \gamma\biggl( \frac{x_j^2 + y_j^2}{2}\biggr)^2 \Biggr)
  +\epsilon \sum_{j=-2}^{1} (x_{j+1}x_j+ y_{j+1}y_j) \ ,\quad
  (x,y)\in\reali^{5}\times\reali^{5}\ ,
\end{equation}
with $\gamma\neq0$ a parameter tuning the nonlinearity; considering
the figure below, one could consider in an equivalent way a chain of
$7$ masses, i.e., $(x,y)\in\reali^7\times\reali^7$ and with fixed boundary
conditions $x_{-3}=y_{-3}=x_{3}=y_{3}=0$.  We introduce action-angle
variables $ (x_j,y_j) =
(\sqrt{2I_j}\cos\varphi_j,-\sqrt{2I_j}\sin\varphi_j)$, for the set of
indices $j\in\Iscr=\{-2,-1,1,2\}$, and the complex canonical
coordinates
$$
 x_0 = \frac{1}{\sqrt{2}}(\xi_0 + \imunit \eta_0), \quad y_0 =
\frac{\imunit}{\sqrt{2}}(\xi_0 - \imunit \eta_0) \ ,
$$ for the remaining central one $(x_0,y_0)$, so that the Hamiltonian
reads as~\eqref{frm:H-modello} with 
\begin{displaymath}
  \begin{aligned}
    h_0(I) &= \sum_{j\in\Iscr}\left(I_j+\gamma I_j^2
    \right)\ ,\\ g_0(\xi,\eta) &= \imunit\xi_0\eta_0
    -\gamma\xi_0^2\eta_0^2\ ,\\
    H_1 &=2\sqrt{I_{-1} I_{-2}}\cos(\varphi_{-1}-\varphi_{-2})+
    2\sqrt{I_2 I_1}\cos(\varphi_2-\varphi_1)+  \\
    &\quad  +(\xi_0 + \imunit \eta_0) \Bigl(\sqrt{I_{-1}} \cos(\varphi_{-1})+\sqrt{I_1} \cos(\varphi_1)\Bigr)
            -\imunit( \xi_0 - \imunit \eta_0 ) \Bigl(\sqrt{I_{-1}} \sin(\varphi_{-1})+\sqrt{I_1} \sin(\varphi_1)\Bigr)\ .
\end{aligned}
\end{displaymath}
Consider now the 4-dimensional unperturbed resonant torus $I=I^*$ with
${I}_j^*=I_l^*$, for $j,l\in\Iscr$, and $\xi_0=\eta_0=0$. The
configuration is represented in the following picture, which explains the
name seagull

\begin{center}
\begin{tikzpicture}[scale = 0.5]
\begin{scope} %[yshift=-5cm]
%% setting the sites positions
  \foreach \name/\posX/\posY in {A/0/0,B/1/2,C/2/2,D/3/0,E/4/2,F/5/2,G/6/0}
    \node (\name) at (\posX,\posY) {};
    \draw [dotted, black!50] (-1,0) -- (7,0);
    \draw [dotted, black!50] (-1,2) -- (7,2);
    \draw [dotted, black!50] (1,2) -- (1,0);
    \draw [dotted, black!50] (2,2) -- (2,0);
    \draw [dotted, black!50] (4,2) -- (4,0);
    \draw [dotted, black!50] (5,2) -- (5,0);
    %% drawing the reference
\draw (A) circle (2mm) \foreach \n in {B,D,F,G} {(\n) circle (2mm)};
%% drawing the MB profile
\fill [black] \foreach \n in {B,C,E,F} {(\n) circle (2.0mm)};
\fill [white] \foreach \n in {A,D,G} {(\n) circle (1.9mm)};
  \draw [red] (A) -- (B) -- (C) -- (D) -- (E) -- (F) -- (G);
  \node[label=below:{\footnotesize-3}] at (A) {$\times$};
  \node[label=below:{\footnotesize0}] at (D) {\phantom{$\times$}};
  \node[label=below:{\footnotesize3}] at (G) {$\times$};
  \node[label=below:{\footnotesize-2}] at (1,0) {\phantom{$\times$}};
  \node[label=below:{\footnotesize-1}] at (2,0) {\phantom{$\times$}};
  \node[label=below:{\footnotesize1}] at (4,0) {\phantom{$\times$}};
  \node[label=below:{\footnotesize2}] at (5,0) {\phantom{$\times$}};
  \node[label=left:{\footnotesize $0$}] at (-1,0) {};
  \node[label=left:{\footnotesize $I^*$}] at (-1,2) {};

\end{scope}
\end{tikzpicture}
\end{center}
where the central oscillator is free to move, while the first and last
one are kept at rest due to the Dirichlet boundary conditions. This
case provides a typical and easy mechanism for Poincar\'e degeneracy,
due to the absence of the $1$:$1$ resonance among the nonlinear
oscillators $I_{-1}$ and $I_1$ in the perturbation $\epsilon H_1$ (see
also \cite{PenKSKP19}). Indeed these two oscillators interact at order
$\Oscr(\epsilon)$ only with the central one $(\xi_0,\eta_0)$, which is at
rest in the unperturbed dynamics; as a consequence $\inter{H_1}_T$ is
independent of the phase difference $\varphi_1-\varphi_{-1}$, and its
critical points are not isolated.

In order to reveal a finer structure of the dynamics around the
unperturbed low-dimensional torus, we expand $H$ in power series of
$J=I-I^*$ and introduce the resonant angles $\hat q=(q_1,q)$ and their
conjugate actions $\hat p=(p_1,p)$ as
\begin{equation}
\label{e.4sites.tr}
  \left\lbrace \begin{aligned}
  & q_1 = \varphi_{-2} \\ 
  & q_2 = \varphi_{-1}- \varphi_{-2} \\ 
  & q_3 = \varphi_1 - \varphi_{-1} \\ 
  & q_4 = \varphi_2 -\varphi_1
\end{aligned}
\right. \qquad\qquad \left\lbrace \begin{aligned} 
  & p_1 = J_{-2} + J_{-1} + J_1+ J_2 \\ 
  & p_2 = J_{-1} + J_1 + J_2\\ 
  & p_3= J_1 + J_2 \\ 
  & p_4 = J_2
\end{aligned}
\right. \ .
\end{equation}
Besides using the change of coordinates above, we decide to split the Hamiltonian in the form
\begin{displaymath}
\begin{aligned}
H^{(0)} =\, &\omega p_1 + \imunit\xi_0\eta_0 + f_4^{(0,0)} 
%+ \sum_{\ell>4} f_\ell^{(0,0)}
\\
&+f_0^{(0,1)}+ f_1^{(0,1)}+ f_2^{(0,1)}+ f_3^{(0,1)} +
f_4^{(0,1)} + \sum_{\ell>4}  f_\ell^{(0,1)} + \Oscr(\epsilon^2)\ ,
\end{aligned}
\end{displaymath}
where $\omega=1+2\gamma I^*$ is the frequency of any periodic orbit on
the unperturbed torus $p=0$ and $f^{(r,s)}_\ell$ is a polynomial of
degree $l$ in ${\hat{p}}$ and degree $m$ in $(\xi_0,\eta_0)$ with
$\ell=2l+m$ and with coefficients depending on the angles $\hat{q}$.
The index $r\geq 0$ identifies the order of normalization ($r=0$ being
the original Hamiltonian), while $s$ keeps track of the order in the
small parameter $\epsilon$.  The explicit form of $f_0^{(0,1)}$ is
$$
f_0^{(0,1)} =2I^*\epsilon \left(\cos(q_2)+ \cos(q_4)\right)\ .
$$
The splitting of the Hamiltonian in such a form may seem quite
obscure now.  However, considering the equation of motion restricted
to the lower dimensional torus $p=0$, $\xi_0=\eta_0=0$ a moment's
thought suggests how to put in evidence the relevant terms of the
perturbation.
 
In order to continue the periodic orbit surviving the breaking of the
unperturbed lower dimensional torus, the standard approach consists in
averaging the leading term of the perturbation, namely $f_0^{(0,1)}$,
with respect to the fast angle $q_1$ and to look for critical points
of the averaged function on the torus $\toro^3$.  In this specific
example however no averaging is required as $f_0^{(0,1)}$ does not
depend on $q_1$, due to the rotational symmetry typical of dNLS models.
Still, solutions of $\nabla_{q} f_0^{(0,1)}=0$ are not isolated and
appear as $1$-parameter families parameterized by $q_3$, hence
Poincar\'e degeneracy occurs.  Let us remark again that here the
degeneracy is due to the lack of the harmonic
$(\varphi_{-1}-\varphi_1)$ in the perturbation at order $\epsilon$,
that entails the independence of $f_0^{(0,1)}$ by $q_3$.

Our aim is to show that only solutions $(q_2,q_3,q_4)$ with
$q_j\in\graff{0,\pi}$ (the so-called \emph{in} or \emph{out-of} phase
solutions) can be continued for $\epsilon\neq 0$. To this end we
implement a normal form construction that is reminiscent of the
Kolmogorov algorithm (see also~\cite{SanLocGio-2010,GioLocSan-2014}).
Indeed, we perform a sequence of canonical transformations in order to
remove the terms $f_1^{(0,1)}$ and $f_3^{(0,1)}$ and to average the
terms $f_0^{(0,1)}$, $f_2^{(0,1)}$ and $f_4^{(0,1)}$ over the fast
angle $q_1$.  In addition, we perform a translation of the actions
$\hat{p}$ so as to keep fixed the linear frequency $\omega$.

This procedure brings the Hamiltonian in normal form at order
$\epsilon$.  Iterating twice the procedure we get the Hamiltonian in
normal form at order $\epsilon^2$ that reads
\begin{displaymath}
\begin{aligned}
H^{(2)} =\, &\omega p_1 + \imunit\xi_0\eta_0 + f_4^{(2,0)} 
%+ \sum_{\ell>4} f_\ell^{(2,0)}
\\
&+ f_0^{(2,1)}+ f_2^{(2,1)}+ f_4^{(2,1)} + \sum_{\ell>4} f_\ell^{(2,1)} \\
&+ f_0^{(2,2)}+  f_2^{(2,2)}+  f_4^{(2,2)} + \sum_{\ell>4}  f_\ell^{(2,2)} + \Oscr(\epsilon^3)\ .
\end{aligned}
\end{displaymath}

Considering the normal form truncated at order two, i.e., neglecting
terms of order $\Oscr(\epsilon^3)$, the leading terms of the
perturbation\footnote{Let us stress that the parameters
    $q_2^*$ and $q_4^*$ allow to select the approximate periodic
    orbit, see equation~\eqref{frm:qstar}.  These parameters are
    introduced by the translation of the actions $\hat{p}$ outlined
    above, see Proposition~\ref{pro:forma-normale-1}
    and~\eqref{frm:traslazioneR} for more details.} are
$$
 f_0^{(2,1)}+ f_0^{(2,2)} = 
2I^*\epsilon\left(\cos(q_2)+ \cos(q_4)\right) +
\frac{\epsilon^2}{\gamma}\bigl(\cos(q_3)-\cos(q_2)\cos(q_2^*)-\cos(q_4)\cos(q_4^*)\bigr)\ ,
$$
and looking for the critical point one gets
\begin{equation}
\label{e.ex.qstar}
2 I^* \sin(q_j)-\frac{\epsilon}{\gamma}\sin(q_j)\cos(q_j^*) =0\ , \quad\hbox{for }j=2,4\ ,\quad\hbox{and}\quad
\epsilon\dfrac{\sin(q_3)}{\gamma} = 0\ .
\end{equation}

The normal form at order two, as we see from the previous equations,
introduces the dependence on $q_3$ that was missing at order one.
With standard arguments of bifurcation theory (the same already used
for example in \cite{PenSPKK18,PenKSKP19}) it is possible to prove
that for $\epsilon$ small enough all families break down and only
solutions with $q_j\in\graff{0,\pi}$ survive; continuation then
follows by means of Newton-Kantorovich fixed point method, since
suitable spectral conditions are verified. We refer the reader to
Section~\ref{s:appl} for details and for interesting results about the
role of $\gamma$ in the linear stability analysis of the solutions:
indeed we will show that neither changing the sign in the nonlinear
parameter $\gamma$ (from focusing to defocusing) nor in the coupling
parameter $\epsilon$ (from attractive to repulsive) affects the nature
of the degenerate eigenspace related to $q_3$, while nondegenerate
directions (as already known from the literature, see
\cite{PelKF05,KouK09}) switch from saddle to center depending on the
sign of the product $\gamma\epsilon$.

Let us remark that the previous example might be explored with a
different approach, which exploits the dNLS structure
of~\eqref{e.ex.dnls}, namely its second conserved quantity
$\sum_{j=-2}^2(x_j^2+y_j^2)$ and the discrete soliton ansatz, which
separate time and (discrete) space variables (see
\cite{JohAGCR98,HenT99,PelKF05,Kev_book09,KouK09,PelKF05b,ChoCMK11}). The
same problem is investigated also in discrete Klein-Gordon models, but
with different perturbation techniques (Lyapunov-Schmidt decomposition
as in \cite{PelS12,PenKSKP19} and Hamiltonian averaging as in
\cite{Aub97,AhnMS01,KouK09,KouKCR13,Kou13}). However, up to our
knowledge, the existing results are valid for specific configurations
(e.g., restricting to consecutive oscillators) and degenerate
solutions can be hardly explored (see \cite{CueKKA11}).  Moreover,
available methods for nondegenerate solutions (as in \cite{Kap01,
  AhnMS01,KouK09}) can be recovered by a single step of our normal
form approach.

The formal statements of the three main results, i.e., the normal form
construction, the continuation theorem and the linear stability
theorem, are detailed in Section~\ref{sec:results} as
Proposition~\ref{pro:forma-normale-1},
Theorem~\ref{teo:forma-normale-r} and Theorem~\ref{t.stab.split},
respectively. Sections~\ref{sec:nfalg} and~\ref{sec:analytics} provide
the description of the normal form construction together with some
analytical estimates. Section~\ref{s:appl} treats extensively the
example~\eqref{e.ex.dnls}. A concluding Appendix collects some
technical results.

%%%%%%%%%%%%%%%%%%%%%%%%%%%%%%%%%%%%%%%%%%%%%%%%%%%%%%%%%%%%%%%%%%%%%%%%%

\section{Main results}
\label{sec:results}

In this section we introduce the analytic setting and we precisely
state the main results of the paper.

\subsection{Analytic setting}

Consider the distinguished classes of functions
$\widehat{\Pset}_{l,m}$, with integers $l$ and $m$, which can be
written as a Taylor-Fourier expansion
\begin{equation}
g(I,\varphi,\xi,\eta) =
\sum_{i\in\naturali^{n_1} \atop |i|= l}
\,\sum_{(m_1,m_2)\in\naturali^{2n_2} \atop |m_1|+|m_2|=m} 
\, \sum_{{\scriptstyle{k\in\interi^{n_1}}}}
g_{i,m_1,m_2,k}\,I^{i} \exp(\imunit
\langle k, \, \varphi \rangle)\xi^{m_1} \eta^{m_2} \ ,
\label{frm:funz}
\end{equation}
with coefficients $g_{i,m_1,m_2,k}\in\complessi$. 
We say that $g\in\Pset_\ell $ in case
$$ g\in\bigcup_{\substack{l\geq 0, m\geq 0
    \\ 2l+m=\ell}}\widehat{\Pset}_{l,m} \ .
$$ We also set $\Pset_{-4}=\Pset_{-3}=\Pset_{-2}=\Pset_{-1}=\{0\}$;
moreover, we introduce the following notation for those terms which
are independent of both actions $I$ and Cartesian variables
$(\xi,\eta)$
\begin{equation}
\label{frm:hat-tilde-def}
f_{l,0}\in{\widehat\Pscr}_{l,0}\ ,\qquad\qquad f_{0,m}\in{\widehat\Pscr}_{0,m}\ .
\end{equation}

Consider the Hamiltonian~\eqref{frm:H-modello} and select a specific
completely resonant elliptic lower dimensional torus for the
unperturbed Hamiltonian, i.e., set $\xi=\eta=0$ and $I=I^*$ such that
\begin{equation}
\label{e.freq.res}
\hat\omega(I^*)=\parder{h_0}{I}\bigg|_{I=I^*} = \omega k\ ,
\quad\hbox{with }
\omega\in\reali\ ,\ k\in\interi^{n_1}\ .
\end{equation}
Expanding the Hamiltonian in Taylor series of the translated actions
$J=I-I^*$ and the Cartesian coordinates $(\xi,\eta)$, and in Fourier
series of the angles $\varphi$ one has
\begin{equation}
  \label{frm:H(start)}
H^{(0)} = \langle {\hat\omega}, J\rangle
+\sum_{j=1}^{n_2} \imunit\Omega_j\xi_j \eta_j
+\sum_{\ell> 2} f_\ell^{(0,0)}(J,\xi,\eta)  
+\sum_{s>0}\sum_{\ell\geq0}f_\ell^{(0,s)}(J,\varphi,\xi,\eta )\ ,
\end{equation}
where $f_\ell^{(0,s)}\in\Pset_\ell$ and is of order
$\Oscr(\epsilon^s)$. The superscript~0 indicates that the Hamiltonian
is the starting one and in the following will be used to keep track of
the normalization order.

\subsection{The normal form}
\label{ssec:nf}
We define the $(n_1-1)$-dimensional resonant module associated to the
resonant frequency $\hat\omega(I^*)$ in~\eqref{e.freq.res} as
\begin{displaymath}
  \Mscr_{\omega} = \Bigl\{ h\in\interi^{n_1}: \langle\hat\omega(I^*), h\rangle
  =0\Bigr\}\ .
\end{displaymath}
In a neighbourhood of the resonant torus, it is useful to introduce
the resonant variables $(\hat p, \hat q)$ in place of
$(J,\varphi)$. Without affecting the generality of the result, we will
assume $k_1=1$ (see~\eqref{e.freq.res}); this choice simplifies the interpretation of the new
variables.  Given $k\in\interi^{n_1}$ defined by \eqref{e.freq.res},
the canonical change of coordinates is built with an unimodular matrix
which defines the \emph{slow} angles $\hat{q}_{j}= k_j \varphi_1 -
\varphi_j$, for $j=2,\ldots,n_1$, as the phase differences with
respect to the \emph{fast} angle $\hat{q}_1$ of the periodic orbit;
the momenta are defined so as to complement the canonical change of
coordinates, in particular $\hat p_1=\inter{k, J}$.

In order to distinguish the dependence on fast and slow variables in
the normal form construction, we introduce the notations
$\hat{p}=(p_1,p)$, $\hat{q}=(q_1,q)$ with $p_1=\hat{p}_1$,
$p=(\hat{p}_2,\ldots,\hat{p}_n)$ and correspondingly for $q_1$ and
$q$. The Hamiltonian~\eqref{frm:H(start)} then reads
\begin{equation}
  H^{(0)} = \omega p_1 +\sum_{j=1}^{n_2} \imunit\Omega_j\xi_j \eta_j
  +\sum_{\ell> 2} f_{\ell}^{(0,0)}(\hat{p},\xi,\eta) +
  \sum_{s>0}\sum_{{\ell\geq 0}}f_{\ell}^{(0,s)}(\hat{p},\hat{q},\xi,\eta)\ ,
    \label{frm:H(0)}
\end{equation}
where $f_{\ell}^{(0,s)}\in\Pset_\ell$ and it is a function of order
$\Oscr(\epsilon^s)$.  Indeed, the linear change of coordinates keeps
unchanged the classes of functions $\Pset_\ell$.

We introduce the usual complex domains $\Dscr_{\rho,\sigma,R} =
\Gscr_{\rho} \times \toro^{n_1}_{\sigma} \times \Bscr_R$, namely
$$
\begin{aligned}
\Gscr_{\rho}&=\big\{\hat{p}\in\complessi^{n_1}:\max_{1\le j\le
  n_1}|\hat{p}_j|<\rho\big\}\ ,\\ \toro^{n_1}_{\sigma}&=\big\{\hat{q}\in\complessi^{n_1}:\realpart
\hat{q}_j\in\toro,\break\ \max_{1\le j\le n_1}|\imaginary
\hat{q}_j|<\sigma\big\}\ ,
\\ \Bscr_{R}&=\big\{(\xi,\eta)\in\complessi^{2n_2}:\max_{1\le j\le
  n_2} \tond{|\xi_j|+|\eta_j|}< R \big\}\ .
\end{aligned}
$$

Given a generic analytic function
$g:\Dscr_{\rho,\sigma,R}\to\complessi$, we define the
weighted Fourier norm
\begin{equation*}
\|g\|_{\rho,\sigma,R}=
\sum_{i\in\naturali^{n_1}}
\,\sum_{(m_1,m_2)\in\naturali^{2n_2}}
\, \sum_{{\scriptstyle{k\in\interi^{n_1}}}}
|g_{i,m_1,m_2,k}| \rho^{|i|} R^{|m_1|+|m_2|} e^{|k|\sigma}\ ;
\end{equation*}
hereafter, we are going to use the shorthand notation
$\|\cdot\|_{\alpha}\,$ for $\|\cdot\|_{\alpha(\rho,\sigma)}\,$.

We now state our main result on the normal form construction; the
proof is deferred to Section~\ref{sec:analytics}.

\begin{proposition}\label{pro:forma-normale-1}
Consider the Hamiltonian $H^{(0)}$, expanded as in~\eqref{frm:H(0)},
and being analytic in the domain $\Dscr_{\rho,\sigma,R}$.  Assume that
\begin{description}
\item[H1)]
  there exists a positive constant $m$ such that for every
  $v\in\reali^{n_1}$ one has
  \begin{equation}
    m \sum_{i=1}^{n_1} |v_i| \leq \sum_{i=1}^{n_1} \bigl|\sum_{j=1}^{n_1} C_{0,ij}
    v_j\bigr|\ ,\quad\hbox{where}\quad C_{0,ij}=\frac{\partial^2
      f_4^{(0,0)}}{\partial \hat{p}_i \partial \hat{p}_j}\bigg|_{\hat p=0}\ ;
    \label{frm:twist}
  \end{equation}
\item[H2)]  the terms appearing in the expansion of the Hamiltonian satisfy
  \begin{equation}
  \|f_\ell^{(0,s)}\|_1 \leq \frac{E}{2^\ell}
  \epsilon^s\ ,\qquad\hbox{with}\quad E>0\ ,\ l,s\geq 0\ .
  \label{frm:decadimento-iniziale}
  \end{equation}
\item[H3)] the frequencies $\omega$ defined in \eqref{e.freq.res} and
  $\Omega_l$ introduced in \eqref{frm:g0expansion} satisfy the first
  and second nonresonance Melnikov conditions
   \begin{align} 
     k_1\omega\pm \Omega_j &\neq 0, \qquad k_1\in\interi \ ,
    \label{melnikov1}
    \\ k_1\omega\pm \Omega_l \pm \Omega_k &\neq 0, \qquad
    k_1\in\interi\setminus \lbrace 0\rbrace \ .
    \label{melnikov2}
   \end{align}
\end{description}
Then, for every integer $r\geq 1$ there exists
$\epsilon_{r}^*>0$ such that for $|\epsilon| < \epsilon_{r}^*$
there exists an analytic canonical transformation $\Phi^{(r)}$
satisfying
\begin{equation}
\Dscr_{\frac{1}{4}(\rho,\sigma,R)} \subset
\Phi^{(r)}\Bigl(\Dscr_{\frac{1}{2}(\rho,\sigma,R)}\Bigr) \subset
\Dscr_{\frac{3}{4}(\rho,\sigma,R)}
\label{frm:domini}
\end{equation}
such that the Hamiltonian $H^{(r)}= H^{(0)} \circ \Phi^{(r)}$ has the
following expansion in normal form up to order $r$
\begin{equation}
  \begin{aligned}
    H^{(r)}(\hat{q},\hat{p},\xi,\eta;q^*) =& \omega p_1 + \sum_{j=1}^{n_2} \imunit \Omega_j\xi_j\eta_j
    +\sum_{\ell> 2} f_{\ell}^{(r,0)}(\hat{p},\xi,\eta)\cr
    &+\sum_{s=1}^{r} \Bigl(f_{0}^{(r,s)}(q;q^*) +
    f_{2}^{(r,s)}(q,\hat{p},\xi,\eta;q^*) + f_{3}^{(r,s)}(\hat{q},\xi,\eta;q^*) +
    f_{4}^{(r,s)}(\hat{q},\hat{p},\xi,\eta;q^*)\Bigr) \cr
    &+\sum_{s>r} \sum_{\ell=0}^{4} f_{l}^{(r,s)}(\hat{q},\hat{p},\xi,\eta;q^*)+\sum_{s>0}\sum_{\ell>4}f_{l}^{(r,s)}(\hat{q},\hat{p},\xi,\eta;q^*)\ ,
    \label{frm:H(r)}
  \end{aligned}
\end{equation}
where $q^*\in\toro^{n_1-1}$ is a fixed but arbitrary vector of
parameters. The Hamiltonian~\eqref{frm:H(r)} is said to be in normal form up to order $r$ since for $s\leq r$ it satisfies

\begin{enumerate}
\item $f_{0}^{(r,s)}(q;q^*)$ do not depend on the fast angle $q_1$;

\item $f_{1}^{(r,s)}(\hat{q},\xi,\eta;q^*)$ do not appear;

\item $f_{2}^{(r,s)}(q,\hat{p},\xi,\eta;q^*)$ do not
  depend on $q_1$ and, evaluated at $\xi=\eta=0$
  and $q=q^*$, are equal to zero;
%\begin{equation}
%  f_{2}^{(r,s)}(q^*,\hat{p},0,0;q^*) = 0\ ;
%  \label{frm:nf-qstar}
%\end{equation}

\item $f_{3}^{(r,s)}(\hat{q},\hat p, \xi,\eta;q^*)$ do not
  depend on the actions ${\hat p}$;
  %, namely
%\begin{equation}
%\label{frm:nf-f3}
%f_{3}^{(r,s)}(\hat{q},0, \xi,\eta;q^*) = 0\ ;
%\end{equation}

\item $f_{4}^{(r,s)}(\hat{q},\hat{p},\xi,\eta;q^*)$, evaluated at
  $\xi=\eta=0$, do not depend on the fast angle $q_1$.
%  \begin{equation}\
%    \label{frm:nf-f4}
%    f_{4}^{(r,s)}(\hat{q},\hat{p},\xi,\eta;q^*)\Big|_{{\xi=0\atop\eta=0}} =
%    f_{4}^{(r,s)}(q,\hat{p},0,0;q^*)\ .
%  \end{equation}
\end{enumerate}  

\end{proposition}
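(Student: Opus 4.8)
The plan is to proceed by induction on the normalization order $r$, constructing $\Phi^{(r)}$ as a composition $\Phi^{(r)}=\Phi^{(r-1)}\circ\Psi^{(r)}$, where $\Psi^{(r)}$ is built from a finite sequence of Lie transforms that normalize the terms of order $\epsilon^r$ while leaving the already-normalized terms of order $\epsilon^s$, $s<r$, untouched. Writing $Z=\omega p_1+\sum_j\imunit\Omega_j\xi_j\eta_j$ for the integrable normal part, each elementary step generated by $\chi$ transforms the Hamiltonian through the Lie series $\exp(\lie{\chi})H=\sum_{k\geq0}\frac{1}{k!}\lie{\chi}^k H$, with $\lie{\chi}\cdot=\bigpoisson{\cdot}{\chi}$, and its leading effect on the order-$\epsilon^r$ part is the homological term $\bigpoisson{Z}{\chi}$. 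Since $\bigpoisson{Z}{\cdot}$ acts diagonally on the Taylor--Fourier monomials $\hat p^{\,a}e^{\imunit\langle k,\hat q\rangle}\xi^{m_1}\eta^{m_2}$, multiplying them by the divisor $\imunit(k_1\omega+\langle m_1-m_2,\Omega\rangle)$, it preserves each class $\Pset_\ell$ and the bidegree $(l,m)$; this is exactly what lets us treat the five families $f_0,\dots,f_4$ independently and, at step $r$, preserve the structure already achieved at lower orders (because $\chi=\Oscr(\epsilon^r)$ moves every lower-order term only to orders $\geq r$).

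The core of the inductive step is the term-by-term solution of the homological equations. The family $f_1^{(\cdot,r)}$ is removed entirely: its monomials have $|m_1|+|m_2|=1$, hence divisor $k_1\omega\pm\Omega_j$, nonzero by the first Melnikov condition \eqref{melnikov1} (including $k_1=0$). The $q_1$-averaging of $f_0^{(\cdot,r)}$, of $f_2^{(\cdot,r)}$ and of the pure-action part of $f_4^{(\cdot,r)}$ is obtained by removing their Fourier modes with $k_1\neq0$: on pure-action and diagonal $\xi_j\eta_j$ monomials the divisor is $k_1\omega$, nonzero for $k_1\neq0$, and on the transverse degree-two monomials with $k_1\neq0$ it is $k_1\omega\pm\Omega_l\pm\Omega_k$, nonzero by the second Melnikov condition \eqref{melnikov2}. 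Finally the action-dependent, bidegree-$(1,1)$ part of $f_3^{(\cdot,r)}$ is eliminated with divisor $k_1\omega\pm\Omega_j$, again via \eqref{melnikov1}. The modes we deliberately retain are precisely those permitted by properties (1)--(5), so these hold by construction. Because the combinations $\langle m_1-m_2,\Omega\rangle$ entering the homological equations involve at most two of the $\Omega_j$ (the higher Cartesian-degree modes being left in place), they form a finite set, and the nonvanishing divisors admit a uniform positive lower bound $\gamma_0>0$, which the quantitative estimates will require.

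The remaining structural requirement — that $f_2^{(r,s)}$ vanish at $\xi=\eta=0$, $q=q^*$ — is enforced by the Kolmogorov-type action translation $\hat p\mapsto\hat p+\tau(q^*;\epsilon)$. After averaging, the bidegree-$(1,0)$ part of $f_2$ at $\xi=\eta=0$ is a linear form $\langle c(q),\hat p\rangle$ with $c=\Oscr(\epsilon)$; the translation feeds the quadratic term $\tfrac12\langle C_0\hat p,\hat p\rangle$ of $f_4^{(r,0)}$ back into the linear part, turning its coefficient at $q=q^*$ into $C_0\tau+c(q^*)$. Imposing that this vanish — which simultaneously fixes the coefficient of $p_1$ at $\omega$ — leads to $C_0\tau=-c(q^*)$; the twist hypothesis H1, i.e.\ the lower bound \eqref{frm:twist} on $C_0$, guarantees that $C_0$ is invertible (to leading order, and for $\epsilon$ small by perturbation) and that $\tau=\Oscr(\epsilon)$ is small. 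The vector $q^*$ enters only through this translation and thus plays the role of a free parameter selecting the approximate periodic orbit about which the normal form is centered.

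It remains to turn this formal scheme into an analytic one on the nested domains. I would estimate each generating function in the weighted Fourier norm $\|\cdot\|_\alpha$: solving the homological equation divides coefficients by divisors bounded below by $\gamma_0$, so $\|\chi\|$ is controlled by $\gamma_0^{-1}$ times the norm of the removed term, which by H2 is $\Oscr(\epsilon^r/2^\ell)$. Cauchy estimates for the Poisson bracket on a scale of shrinking domains then bound the Lie series and, at the cost of a geometric loss of analyticity radius per order, yield the inclusions \eqref{frm:domini} and a threshold $\epsilon_r^*$ below which $\Psi^{(r)}$, hence $\Phi^{(r)}$, is analytic and close to the identity. I expect the main obstacle to be precisely this bookkeeping: one must check that the several elementary transforms composing $\Psi^{(r)}$, together with the nonlinear action translation, do not spoil the geometric decay \eqref{frm:decadimento-iniziale} inherited by the new terms $f_\ell^{(r,s)}$, so that the induction closes with constants uniform in the number of steps and with summable domain losses. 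Maintaining the class structure $\Pset_\ell$ and the five normal-form properties under composition, while tracking how each transform generates contributions at all higher orders $s>r$ and degrees $\ell>4$, is the delicate part of the argument.
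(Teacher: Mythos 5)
Your proposal is correct and follows essentially the same route as the paper: an order-by-order Lie-series normalization in which the five families $f_0,\dots,f_4$ are treated by separate homological equations with divisors $k_1\omega$ and $k_1\omega\pm\Omega_j$, $k_1\omega\pm\Omega_l\pm\Omega_k$ controlled by the Melnikov conditions, a Kolmogorov-type action translation determined through the twist condition \textbf{H1} (the paper's equation~\eqref{frm:traslazioneR}), and weighted Fourier norms with Cauchy estimates on shrinking domains to close the induction (the paper's Lemma~\ref{lem:lemmone.1}). The only differences are organizational: the paper packages each step into five explicit stages with generating functions $\chi_0^{(r)},\dots,\chi_4^{(r)}$ and delegates the quantitative bookkeeping you flag as delicate to Section~\ref{sec:analytics}, but the substance is the same.
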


\noindent
Some comments are in order. Assumption {\bf H1} is needed in order to
keep the frequency fixed on the torus, as in the classical Kolmogorov
construction.  It implies the invertibility of the Hessian $C_0$,
which is equivalent to the invertibility of $D_J^2{h_0}(I^*)$ in the
original coordinates: this is sometimes known as \emph{twist}
condition, or \emph{Kolmogorov nondegeneracy} condition, and encodes
the fact that the resonant torus is locally isolated in the space of
actions. Assumption~{\bf H2} is a typical requirement on the decay of
the homogeneous terms of the Taylor expansion in $\epsilon$. The last
ones, {\bf H3}, are the so-called first and second Melnikov
conditions, and ensure absence of resonances between the periodic
motion and the transverse linear oscillators. Actually, the first
Melnikov condition~\eqref{melnikov1} is enough to get existence of the
continuation of the periodic orbit, while the second
one~\eqref{melnikov2} is needed to exhibit the linear stability of the
orbit (see \cite{Giorgilli-2012}).

The proof of Proposition~\ref{pro:forma-normale-1} is based on
standard arguments in Lie series theory.  The key estimates that
allow to complete the proof are reported in Lemma~\ref{lem:lemmone.1}
in Section~\ref{sec:analytics}. We do not report here all the
(tedious) details since similar results have been already published
in,
e.g.,~\cite{GioLoc-1997,Gio03,Giorgilli-2012,GioLocSan-2014,GioLocSan-2015,SanCec-2017,PenSD18}.

\subsection{Approximation and continuation of periodic orbits}
\label{ssec:cpo}

The Hamiltonian~\eqref{frm:H(r)}, being in normal form up to order
$r$, allows to find approximate periodic orbits.  Precisely, consider
the normal form approximation $Z^{(r)}$, i.e., $H^{(r)}$ neglecting
the terms of order $\Oscr\tond{\epsilon^{r+1}}$,
\begin{equation}
  \begin{aligned}
    Z^{(r)}(\hat{q},\hat{p},\xi,\eta;q^*) =\,& \omega p_1 + \sum_{j=1}^{n_2} \imunit \Omega_j\xi_j\eta_j
    +\sum_{\ell> 2} f_{\ell}^{(r,0)}(\hat{p},\xi,\eta)\cr
    &+\sum_{s=1}^{r} \Bigl(f_{0}^{(r,s)}(q;q^*) +
    f_{2}^{(r,s)}(q,\hat{p},\xi,\eta;q^*) + f_{3}^{(r,s)}(\hat{q},\xi,\eta;q^*) +
    f_{4}^{(r,s)}(\hat{q},\hat{p},\xi,\eta;q^*)\Bigr) \cr
    &+\sum_{s=0}^r\sum_{\ell>4}f_{l}^{(r,s)}(\hat{q},\hat{p},\xi,\eta;q^*)\ ,
    \label{frm:Z(r)}
  \end{aligned}
\end{equation}
and take as initial datum
$x^*=(q=q^*,\hat p= 0,\xi=0,\eta=0)$.  It is straightforward to see that the canonical equations read
\begin{equation*}
\dot{q}_1 = \omega\ ,  \qquad
\dot{q} = 0\ , \qquad
\dot{p}_1 = 0\ ,  \qquad
\dot{p} = -\sum_{s=1}^r \nabla_{q} f_0^{(r,s)}\Big|_{q=q^*}\ ,  \qquad
\dot{\xi}=0 \ , \qquad
\dot{\eta}=0
\ .
\end{equation*}
Hence, if $q=q^*$ is chosen as a solution of
\begin{equation}
\label{frm:qstar}
\sum_{s=1}^r \nabla_q {f_0^{(r,s)}}\big|_{q=q^*} = 0\ ,  
\end{equation}
then $(q_1(0),x^*)$ is the initial datum (modulo the initial phase
$q_1(0)$) of a periodic orbit with frequency $\omega$ for the
truncated normal form, $x=x^*$ being a relative
equilibrium\footnote{Let us stress that, for $r>1$, $q^*(\epsilon)$
  actually depends on $\epsilon$ and it is analytic.  Indeed, $(\omega
  t+q_1(0),x^*(\epsilon))$ is a periodic solution of an analytic
  Hamiltonian whose flow is analytic
  in $\epsilon$.} for $Z^{(r)}$.

We now introduce the smooth map
$\Upsilon(x):\Uscr(x^*)\subset\reali^{2n-1}\to
\Vscr(x^*)\subset\reali^{2n-1}$ as
\begin{equation}\label{frm:Ups}
\Upsilon(x(0);\epsilon,q_{1}(0))=
  \begin{pmatrix}
  q_1(T)-q_1(0)-\omega T\\
  q(T)-q(0)\\
  p(T)-p(0)\\
  \xi(T)-\xi(0) \\
  \eta(T)-\eta(0)
  \end{pmatrix} \ ,
\end{equation}
parameterized by the initial phase $q_1(0)$ and $\epsilon$, with $T$ the period of the periodic orbit.  Then, the
periodicity condition is equivalent to $\Upsilon(x(0);\epsilon,q_{1}(0))=0$; notice that in~\eqref{frm:Ups} we have neglected the equation for $p_1$, due to the
conservation of the energy, which provides a dependence relation among
the $2n$ equations (see for example \cite{MelS05}).

The periodic orbit $x^*$ of the truncated normal form $Z^{(r)}$ turns
out to be an approximate periodic orbit of the full Hamiltonian
system; indeed it will be shown (see Lemma~\ref{lem:stima-approx-p.o.}
in Section~\ref{sec:analytics}) that
\begin{displaymath}
\norm{\Upsilon(x^*;\epsilon,q_{1}(0))}\leq c_1\epsilon^{r+1}\ ,
\end{displaymath}
for some positive constant $c_1(r)$ (growing with $r$) and $\epsilon$
small enough.  A true periodic orbit, close to the approximate one, is
then identified by an initial datum $x^*_{\rm p.o.}\in\Uscr(x^*)$ such
that
\begin{equation*}
  \Upsilon(x^*_{\rm p.o.};\epsilon,q_1(0))= 0\ .
\end{equation*}
In order to prove the existence of a true periodic orbit $x^*_{p.o.}$
close to $x^*$ we apply the Newton-Kantorovich
method (see \cite{KolF89,PenSD18}).

\begin{proposition}[Newton-Kantorovich]
\label{prop:N-K}
Consider a smooth map
$\Upsilon\in\mathcal{C}^1\left(\mathcal{U}(x^*)\times\mathcal{U}(0),V\right)$.
Assume that there exist three positive constants $c_{1}$, $c_{2}$ and $c_{3}$ dependent, for
$\epsilon$ small enough, on $\Uscr(x^*)\subset V$ only, and two
parameters $0\leq 2\alpha<\beta$ such that
\begin{align}
  \|\Upsilon(x^*,\epsilon)\| &\leq c_1 |\epsilon|^{\beta}\ ,\label{equilb-approx}\\
  \opnorm{[\Upsilon'(x^*,\epsilon)]^{-1}} &\leq c_2 |\epsilon|^{-\alpha}\ ,\label{inversa}\\
  \opnorm{\Upsilon'(z,\epsilon)-\Upsilon'(x^*,\epsilon)} &\leq c_3 \norm{z-x^*}\ ,\label{lipsc}
\end{align}
where $\opnorm{\cdot}$ denotes the operator norm.  Then there exist
positive $c_0$ and $\epsilon^*$ such that, for $|\epsilon|<\epsilon^*$, there
exists a unique $x^*_{p.o.}(\epsilon)\in\mathcal{U}(x^*)$ which fulfills
\begin{displaymath}
\Upsilon(x^*_{p.o.},\epsilon)=0\ ,\qquad \|x^*_{p.o.}-x^*\| \leq
c_0|\epsilon|^{\beta-\alpha}\ .
\end{displaymath}
Furthermore, Newton's algorithm converges to $x^*_{p.o.}$.
\end{proposition}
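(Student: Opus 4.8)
The plan is to recognize this as the classical Newton--Kantorovich setting and to prove convergence through a contraction argument applied to the (simplified) Newton map. Fix $\epsilon$ and abbreviate $A=[\Upsilon'(x^*,\epsilon)]^{-1}$, whose existence is guaranteed by hypothesis~\eqref{inversa}. I would introduce the map
\begin{equation*}
\Psi(x)=x-A\,\Upsilon(x,\epsilon)\ ,
\end{equation*}
whose fixed points coincide with the zeros of $\Upsilon(\cdot,\epsilon)$, and then show that $\Psi$ is a contraction of a suitable closed ball $\overline{B}(x^*,\delta)$ into itself.

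For the contraction estimate, differentiating gives $\Psi'(x)=\Id-A\,\Upsilon'(x,\epsilon)=A\bigl(\Upsilon'(x^*,\epsilon)-\Upsilon'(x,\epsilon)\bigr)$, so that by~\eqref{inversa} and the Lipschitz bound~\eqref{lipsc} one has $\opnorm{\Psi'(x)}\leq\opnorm{A}\,\opnorm{\Upsilon'(x^*,\epsilon)-\Upsilon'(x,\epsilon)}\leq c_2c_3|\epsilon|^{-\alpha}\norm{x-x^*}$. Hence on $\overline{B}(x^*,\delta)$ we get $\opnorm{\Psi'(x)}\leq c_2c_3|\epsilon|^{-\alpha}\delta$, and any $\delta$ with $c_2c_3|\epsilon|^{-\alpha}\delta\leq 1/2$ makes $\Psi$ Lipschitz of constant $1/2$ there. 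For the self-mapping property I would use~\eqref{equilb-approx} together with~\eqref{inversa} to control the displacement of the centre, $\norm{\Psi(x^*)-x^*}=\norm{A\,\Upsilon(x^*,\epsilon)}\leq c_1c_2|\epsilon|^{\beta-\alpha}$; then for $x\in\overline{B}(x^*,\delta)$,
\begin{equation*}
\norm{\Psi(x)-x^*}\leq\norm{\Psi(x)-\Psi(x^*)}+\norm{\Psi(x^*)-x^*}\leq\tfrac{1}{2}\delta+c_1c_2|\epsilon|^{\beta-\alpha}\ ,
\end{equation*}
which is $\leq\delta$ provided $c_1c_2|\epsilon|^{\beta-\alpha}\leq\delta/2$.

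Combining the two requirements, I need a radius satisfying
\begin{equation*}
2c_1c_2|\epsilon|^{\beta-\alpha}\leq\delta\leq\frac{1}{2c_2c_3}|\epsilon|^{\alpha}\ ,
\end{equation*}
and such a $\delta$ exists precisely when $4c_1c_2^2c_3|\epsilon|^{\beta-2\alpha}\leq 1$. This is the crux of the argument, and it is exactly here that the hypothesis $2\alpha<\beta$ enters: since the exponent $\beta-2\alpha$ is strictly positive, the inequality holds for all $|\epsilon|<\epsilon^*$ with $\epsilon^*$ small enough. Choosing $\delta=2c_1c_2|\epsilon|^{\beta-\alpha}$, which tends to $0$ so that the ball lies inside $\Uscr(x^*)$ for $\epsilon$ small, the Banach fixed point theorem yields a unique $x^*_{p.o.}\in\overline{B}(x^*,\delta)$ with $\Upsilon(x^*_{p.o.},\epsilon)=0$ and $\norm{x^*_{p.o.}-x^*}\leq\delta=c_0|\epsilon|^{\beta-\alpha}$, where $c_0=2c_1c_2$; moreover the iterates $x_{k+1}=\Psi(x_k)$ converge geometrically to it.

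Finally, to match the statement about Newton's algorithm in its full form, I would note that on $\overline{B}(x^*,\delta)$ the factorization $\Upsilon'(x,\epsilon)=\Upsilon'(x^*,\epsilon)\bigl(\Id-A(\Upsilon'(x^*,\epsilon)-\Upsilon'(x,\epsilon))\bigr)$ together with the bound $\opnorm{A(\Upsilon'(x^*,\epsilon)-\Upsilon'(x,\epsilon))}\leq 1/2<1$ shows, via the Neumann series, that $\Upsilon'(x,\epsilon)$ remains invertible throughout the ball. The full Newton iteration is therefore well defined and, being a standard perturbation of the contraction $\Psi$, converges to the same $x^*_{p.o.}$. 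The only genuine obstacle is the balancing of the two radius constraints, which I expect to reduce, as above, to the single clean inequality controlled by the sign of $\beta-2\alpha$.
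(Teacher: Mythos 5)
Your argument is essentially correct, but it cannot be compared step-by-step with the paper's proof for a simple reason: the paper offers no proof of Proposition~\ref{prop:N-K} at all, quoting it as the classical Newton--Kantorovich theorem with pointers to \cite{KolF89} and \cite{PenSD18}. What you have done is supply a self-contained proof via the simplified Newton (chord) map $\Psi(x)=x-A\Upsilon(x,\epsilon)$ and the Banach fixed point theorem, and this route is well chosen: hypothesis~\eqref{lipsc} is only a \emph{center}-Lipschitz condition at $x^*$ (it controls $\Upsilon'(z)-\Upsilon'(x^*)$, not $\Upsilon'(z_1)-\Upsilon'(z_2)$ for arbitrary pairs), and the chord iteration is precisely the scheme for which this weak hypothesis suffices, whereas the textbook Kantorovich theorem for the genuine Newton iteration is usually stated under a full Lipschitz bound on the derivative. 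You also isolate correctly the quantitative heart of the matter: the two requirements on the radius are compatible exactly because $\beta-2\alpha>0$, which is where the hypothesis $0\le 2\alpha<\beta$ enters, and this matches the way the proposition is invoked in Theorem~\ref{teo:forma-normale-r} (there $\beta=r+1$ and the condition $2\alpha<r+1$ appears verbatim).

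Two caveats, neither fatal. First, Banach's theorem gives uniqueness only in the ball $\overline{B}(x^*,\delta)$ with $\delta=2c_1c_2|\epsilon|^{\beta-\alpha}$, while the statement claims uniqueness in $\Uscr(x^*)$. You can enlarge the uniqueness region: any zero $z\in\Uscr(x^*)$ satisfies $\norm{z-x^*}\le c_2c_3|\epsilon|^{-\alpha}\norm{z-x^*}^2+c_1c_2|\epsilon|^{\beta-\alpha}$, so it is either within $O(|\epsilon|^{\beta-\alpha})$ of $x^*$ or at distance at least of order $|\epsilon|^{\alpha}/(c_2c_3)$; uniqueness in a fixed, $\epsilon$-independent neighbourhood does not literally follow from the hypotheses, and is best read as the usual local-uniqueness convention in such statements. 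Second, your last paragraph is informal: ``being a standard perturbation of the contraction $\Psi$'' is not an argument for the convergence of the full Newton iteration. It can be completed with ingredients you already have: on a slightly enlarged ball the center-Lipschitz bound gives $\opnorm{\Upsilon'(x)-\Upsilon'(y)}\le 2c_3\delta$, your Neumann-series step gives $\opnorm{[\Upsilon'(x,\epsilon)]^{-1}}\le 2c_2|\epsilon|^{-\alpha}$, and writing $N(x)-x^*_{p.o.}=[\Upsilon'(x,\epsilon)]^{-1}\int_0^1\bigl(\Upsilon'(x,\epsilon)-\Upsilon'(x^*_{p.o.}+t(x-x^*_{p.o.}),\epsilon)\bigr)(x-x^*_{p.o.})\,dt$ yields $\norm{N(x)-x^*_{p.o.}}\le 8c_1c_2^2c_3|\epsilon|^{\beta-2\alpha}\norm{x-x^*_{p.o.}}$ for the Newton map $N$, i.e.\ a contraction toward the root for $\epsilon$ small. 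As written, though, that final clause of the statement is asserted rather than proved.
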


We stress that the main assumption concerns the invertibility of
differential of $\Upsilon$,
\begin{equation}
  \label{frm:M(epsilon)}
 M(\epsilon) = \Upsilon'(x^*;\epsilon, q_1(0))\ ,
\end{equation}
being essentially a condition on the minimum eigenvalue, that is
vanishing with $\epsilon$.  Indeed, it is extremely difficult to directly
verify~\eqref{inversa} on an actual application starting from the
definition~\eqref{frm:M(epsilon)}.

A way out is given by the so-called variational equations, around a
given orbit $\phi^t(x_0)$, where $x_0=\phi^0(x_0)$ is the initial
datum.  It turns out that $M(\epsilon)$ can be derived by the monodromy matrix
$\Phi(T;H^{(r)},x^*)$, which is the evolution at time $T$ of the
fundamental matrix of the linear vector field
$DX_{H^{(r)}}(\phi^t(x^*))$
\begin{displaymath}
\frac{d}{dt} \Phi(t;H^{(r)},x^*) =
DX_{H^{(r)}}(\phi^t(x^*))\Phi(t;H^{(r)},x^*)\ ,
\end{displaymath}
where we have denoted by $X_H$ the Hamiltonian vector field given by
$H$. Actually, one can take advantage of the normal normal form
construction in order to approximate $M(\epsilon)$; indeed, by
considering the truncated normal form $Z^{(r)}$ in~\eqref{frm:Z(r)},
it turns out that the linearization $DX_{Z^{(r)}}(x^*)$ around the
relative equilibrium $x^*$ is a constant matrix and, furthermore, it
is block diagonal. This is a consequence of properties \emph{4}
and~\emph{5} in the normal form construction, which allows to split
the dependence on the ``internal'' variables $(q,\hat p)$ and
$(\xi,\eta)$ in the quadratic part of $Z^{(r)}$. In order to better
develop this point, and show how to exploit $DX_{Z^{(r)}}(x^*)$ to
verify~\eqref{inversa}, we need to introduce a convenient notation.
Let $M$ be a $2n$-dimensional square matrix. We denote by $\matred{M}$
the reduced matrix: the $2(n-1)$ dimensional square matrix obtained
from $M$ by removing the first column (related to the fast angle
$q_1$) and the ($n_1+1$)-th row (related to the momentum $p_1$).

We can now state the following
\begin{lemma}
\label{l.mat.red}
The differential $M(\epsilon)$ defined in~\eqref{frm:M(epsilon)} is
the reduction of $\Phi(T;H^{(r)},x^*)-\Id$, namely
\begin{displaymath}
M(\epsilon) = \matred{\tond{\Phi(T;H^{(r)},x^*)-\Id}}\ .
\end{displaymath}
Moreover, $M(\epsilon)$ has the following decomposition
\begin{equation}
M(\epsilon)=N(\epsilon)+\Oscr(\epsilon^{r+1})\ ,
\qquad\hbox{with}\quad
N(\epsilon)=\left( \begin{matrix}
{N_{11}}(\epsilon) & O\\
O & N_{22}(\epsilon) \\
\end{matrix}\right)\ ,
\label{frm:N11N22}
\end{equation}
where the leading term reads
$$
N(\epsilon) =
\matred{\tond{\Phi(T;Z^{(r)},x^*)-\Id}}\ ,\qquad\hbox{with}\quad
\Phi(T;Z^{(r)},x^*)=\exp\tond{DX_{Z^{(r)}}(x^*) T}\ .
$$
\end{lemma}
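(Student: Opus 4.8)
The plan is to prove Lemma~\ref{l.mat.red} in two parts, corresponding to its two assertions. First I would establish that $M(\epsilon)$ is the reduction of $\Phi(T;H^{(r)},x^*)-\Id$, and then I would show the block-diagonal decomposition of its leading term $N(\epsilon)$ coming from the truncated normal form.

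For the first assertion, the key observation is that the period-$T$ map applied to an initial datum $x(0)$ near $x^*$ produces the displacement vector whose components are exactly the entries of $\Upsilon$ in~\eqref{frm:Ups}, up to the subtraction of the rigid rotation $\omega T$ in the fast-angle slot. I would differentiate the relation $x(T)=\phi^T(x(0))$ with respect to the initial datum at $x^*$, so that $\partial x(T)/\partial x(0)=\Phi(T;H^{(r)},x^*)$ by definition of the monodromy matrix. Since $\Upsilon$ is built from $x(T)-x(0)$ (with the extra $-\omega T$ term, which is constant in $x(0)$ and hence drops under differentiation), its differential at $x^*$ is $\Phi(T;H^{(r)},x^*)-\Id$. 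The only remaining point is bookkeeping: $\Upsilon$ omits the $p_1$-equation (energy conservation) and is parameterized rather than differentiated in the fast angle $q_1$; this is precisely the reduction operation $\matred{\cdot}$ that deletes the column associated to $q_1$ and the row associated to $p_1$, giving $M(\epsilon)=\matred{\tond{\Phi(T;H^{(r)},x^*)-\Id}}$.

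For the second assertion I would split $H^{(r)}=Z^{(r)}+\Oscr(\epsilon^{r+1})$, so that the monodromy matrix of the full normal form differs from that of the truncation by a term controlled by the neglected tail. Standard smooth dependence of the flow on the vector field (Gronwall-type estimates over the finite time $T$) yields $\Phi(T;H^{(r)},x^*)=\Phi(T;Z^{(r)},x^*)+\Oscr(\epsilon^{r+1})$, hence $M(\epsilon)=N(\epsilon)+\Oscr(\epsilon^{r+1})$ with $N(\epsilon)=\matred{\tond{\Phi(T;Z^{(r)},x^*)-\Id}}$. Because $x^*$ is a relative equilibrium of $Z^{(r)}$, the linearization $DX_{Z^{(r)}}(x^*)$ is a \emph{constant} matrix, so its fundamental solution is simply the matrix exponential $\exp\tond{DX_{Z^{(r)}}(x^*) T}$. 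The block structure then follows from properties \emph{4} and \emph{5} of Proposition~\ref{pro:forma-normale-1}: evaluating $f_4^{(r,s)}$ at $\xi=\eta=0$ kills the $q_1$-dependence and, more importantly, the quadratic part of $Z^{(r)}$ at $x^*$ contains no mixed terms coupling the internal variables $(q,\hat p)$ with the transverse variables $(\xi,\eta)$. I would verify this by computing the Hessian of $Z^{(r)}$ at $x^*$ and checking that the off-diagonal $(q,\hat p)$--$(\xi,\eta)$ blocks vanish, so that $DX_{Z^{(r)}}(x^*)$, and therefore its exponential and its reduction, are block diagonal as claimed.

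The main obstacle I expect is the careful tracking of which rows and columns survive the reduction $\matred{\cdot}$ and the verification that the block-diagonal structure of $DX_{Z^{(r)}}(x^*)$ is \emph{preserved} under this reduction. The exponential of a block-diagonal matrix is block diagonal, but the deletion of the $q_1$-column and $p_1$-row must be shown to respect the splitting into the $N_{11}$ (internal) and $N_{22}$ (transverse) blocks rather than to mix them; this requires ordering the variables so that $q_1$ and $p_1$ belong to the internal block and confirming that their removal does not disturb the transverse block. The analytic estimate $\Phi(T;H^{(r)},x^*)=\Phi(T;Z^{(r)},x^*)+\Oscr(\epsilon^{r+1})$ is routine given the size of the remainder in $H^{(r)}$, so the essential work lies in the linear-algebraic check that the normal form properties genuinely decouple the quadratic part at $x^*$.
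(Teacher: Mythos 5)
Your proposal is correct and follows essentially the same route as the paper's proof: identify $M(\epsilon)$ with the reduced monodromy matrix via differentiation of the flow (the constant $-\omega T$ term and the omitted $q_1$-column/$p_1$-row accounting for the reduction $\matred{\cdot}$), then approximate $\Phi(T;H^{(r)},x^*)$ by $\Phi(T;Z^{(r)},x^*)$, which is the exponential of the constant, block-diagonal matrix $DX_{Z^{(r)}}(x^*)$ thanks to properties \emph{4} and \emph{5} of the normal form. The paper states these points more tersely; your added details (the Gronwall-type estimate for the $\Oscr(\epsilon^{r+1})$ error and the check that removing the $q_1$-column and $p_1$-row only affects the internal block) are exactly the bookkeeping the paper leaves implicit.
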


\begin{proof}
It is well known that the fundamental matrix $\Phi(T;H^{(r)},x^*)$
equals the differential of the Hamiltonian flow with respect to the
generic initial datum (close to $\phi^t(x^*)$). Since $\Upsilon$
ignores the evolution of $p_1$ and does not depend on the fast angle
$q_1$, we obtain $M(\epsilon) =
\matred{\tond{\Phi(T;H^{(r)},x^*)-\Id}}$. Thus, the structure of
$\Phi(T;H^{(r)},x^*)-\Id$ can be investigated exploiting the
linearization around the relative equilibrium $x^*$ of the truncated
normal form $Z^{(r)}$.  Indeed, the matrix $\Phi(T;H^{(r)},x^*)$ can
be approximated by $\Phi(T;Z^{(r)},x^*)$, the last having a block
diagonal structure and being represented by the exponential of the
time-independent matrix $DX_{Z^{(r)}}(x^*)$.
\end{proof}

In view of Lemma~\ref{l.mat.red}, we can focus on the leading term
$N(\epsilon)$, which is expected to be explicitly calculated via the
normal form algorithm. Moreover, the nonresonance
condition~\eqref{melnikov1} ensures that the matrix $N_{22}(0)$ is
diagonal with eigenvalues of order $\Oscr(1)$.  Thus, by continuity of
the spectrum with respect to $\epsilon$, the same order of magnitude holds
also for the eigenvalues of $N_{22}(\epsilon)$. Accordingly, only the
eigenvalues in $\Sigma(N_{11}(\epsilon))$ vanish with $\epsilon\to 0$, and the
continuation result can be formulated by assuming suitable conditions
on $N_{11}(\epsilon)$.

\begin{theorem}
\label{teo:forma-normale-r}
  Consider the map $\Upsilon$ defined in~\eqref{frm:Ups} in a
  neighbourhood of the lower dimensional torus $\hat{p}=0$, $\xi=\eta=0$ and let
  $x^*(\epsilon)=({q^*}(\epsilon),0,0,0)$, with ${q^*}(\epsilon)$
  satisfying~\eqref{frm:qstar}.  Assume that
  \begin{equation}
    \label{e.small.Ups}
    \norm{\Upsilon(x^*(\epsilon);\epsilon,q_1(0))}\leq c_1
    \epsilon^{r+1}\ ,
  \end{equation}
  where $c_1$ is a positive constant depending on $\Uscr$ and
  $r$. Assume that $N_{11}(\epsilon)$ in~\eqref{frm:N11N22} is invertible
  and there exists $\alpha>0$ with $2\alpha<r+1$ such that
  \begin{equation}
    \label{e.small.eig}
    |\lambda|\gtrsim |\epsilon|^\alpha\ ,\qquad\hbox{for all}\quad\lambda\in
    \Sigma\tond{{N_{11}}(\epsilon)}\ .
  \end{equation}
  Then, there exist $c_0>0$ and $\epsilon^*>0$ such that for any
  $0\leq|\epsilon|<\epsilon^*$ there exists a unique $x^*_{\rm
    p.o.}(\epsilon)=(q^*_{\rm p.o.}(\epsilon),\hat{p}_{\rm
    p.o.}(\epsilon),\xi_{\rm p.o.}(\epsilon),\eta_{\rm
    p.o.}(\epsilon))\in\Uscr$ which solves
  \begin{equation}
    \label{e.exist.appr}
    \Upsilon(x^*_{\rm p.o.};\epsilon,q_1(0))=0
    \ ,\qquad\hbox{with}\quad
    \norm{x^*_{\rm
        p.o.}-x^*}\leq c_0\epsilon^{r+1-\alpha}\ .
\end{equation}
\end{theorem}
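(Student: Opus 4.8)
The plan is to deduce the statement directly from the Newton--Kantorovich Proposition~\ref{prop:N-K}, applied with $\beta=r+1$ and with $\alpha$ as provided by~\eqref{e.small.eig}. Under this identification the standing constraint $2\alpha<r+1$ is exactly the requirement $2\alpha<\beta$ of the proposition, so once its three hypotheses are checked the conclusion transfers verbatim: it yields the unique $x^*_{\rm p.o.}(\epsilon)$ solving $\Upsilon=0$ together with $\norm{x^*_{\rm p.o.}-x^*}\leq c_0|\epsilon|^{\beta-\alpha}=c_0\epsilon^{r+1-\alpha}$, which is~\eqref{e.exist.appr}. Of the three hypotheses, the first, \eqref{equilb-approx}, is nothing but the standing assumption~\eqref{e.small.Ups}, so no work is needed there; only the invertibility bound~\eqref{inversa} and the Lipschitz bound~\eqref{lipsc} must be established.

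For the invertibility bound I would start from Lemma~\ref{l.mat.red}, which writes $M(\epsilon)=\Upsilon'(x^*;\epsilon,q_1(0))=N(\epsilon)+\Oscr(\epsilon^{r+1})$ with $N(\epsilon)=\mathrm{diag}\tond{N_{11}(\epsilon),N_{22}(\epsilon)}$ block diagonal. By the first Melnikov condition~\eqref{melnikov1} and continuity of the spectrum (as noted just before the statement) the block $N_{22}(\epsilon)$ has eigenvalues of order $\Oscr(1)$, hence $\opnorm{N_{22}(\epsilon)^{-1}}=\Oscr(1)$; the block $N_{11}(\epsilon)$ is invertible by assumption with all eigenvalues bounded below in modulus by $|\epsilon|^\alpha$, giving $\opnorm{N_{11}(\epsilon)^{-1}}\lesssim|\epsilon|^{-\alpha}$, so by block diagonality $\opnorm{N(\epsilon)^{-1}}\lesssim|\epsilon|^{-\alpha}$. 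I would then absorb the remainder via a Neumann series: writing $M=N\tond{\Id+N^{-1}\tond{M-N}}$ and using $\opnorm{N^{-1}\tond{M-N}}\lesssim|\epsilon|^{-\alpha}\cdot|\epsilon|^{r+1}=|\epsilon|^{r+1-\alpha}\to 0$ (the exponent is positive since $2\alpha<r+1$ forces $\alpha<r+1$), one gets that $M(\epsilon)$ is invertible for $\epsilon$ small with $\opnorm{M(\epsilon)^{-1}}\leq 2\opnorm{N(\epsilon)^{-1}}\leq c_2|\epsilon|^{-\alpha}$, i.e.\ exactly~\eqref{inversa}.

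The Lipschitz bound~\eqref{lipsc} I would obtain from the regularity of the time-$T$ flow. Since the Hamiltonian is analytic, $\Upsilon$ is $C^2$ on a neighbourhood of $x^*$ with second derivatives bounded uniformly in $\epsilon$ on the compact closure of $\Uscr(x^*)$; a first-order Taylor estimate then gives $\opnorm{\Upsilon'(z,\epsilon)-\Upsilon'(x^*,\epsilon)}\leq c_3\norm{z-x^*}$ with $c_3$ independent of $\epsilon$ for $\epsilon$ small. With the three hypotheses verified, Proposition~\ref{prop:N-K} produces the asserted $x^*_{\rm p.o.}(\epsilon)$.

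I expect the main obstacle to be the invertibility estimate~\eqref{inversa}, and inside it the competition of two scales: the smallest eigenvalue of $N_{11}$, collapsing like $|\epsilon|^\alpha$, against the normal-form remainder of size $\epsilon^{r+1}$. The hypothesis $2\alpha<r+1$ is precisely what keeps the remainder negligible after amplification by $\opnorm{N^{-1}}\sim|\epsilon|^{-\alpha}$ (the product is $\Oscr(\epsilon^{r+1-\alpha})$, small because $r+1-\alpha>\alpha>0$) and, at the same time, makes the final displacement exponent $\beta-\alpha=r+1-\alpha$ genuinely positive. One subtlety worth flagging is that~\eqref{e.small.eig} controls the \emph{eigenvalues} of $N_{11}$ whereas $\opnorm{N_{11}^{-1}}$ is governed by its smallest \emph{singular value}; the passage from one to the other introduces at most the conditioning of the eigenvector frame of $N_{11}(\epsilon)$, which in the present fixed-dimensional setting can be checked to stay bounded as $\epsilon\to0$ from the explicit normal-form expression for $N_{11}$ and hence absorbed into the constant $c_2$.
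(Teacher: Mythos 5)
Your proposal is correct and follows essentially the same route as the paper: Newton--Kantorovich (Proposition~\ref{prop:N-K}) with \eqref{equilb-approx} coming from the hypothesis, \eqref{lipsc} from analyticity of the flow, and \eqref{inversa} from the block structure of Lemma~\ref{l.mat.red} (Melnikov for $N_{22}$, the spectral assumption for $N_{11}$) plus a Neumann-series absorption of the $\Oscr(\epsilon^{r+1})$ remainder --- your inline Neumann step is precisely the content of the paper's Proposition~\ref{p.min.eigen}, and the eigenvalue-versus-operator-norm subtlety you flag is exactly what the paper encapsulates in the constant $c_{\textsf{op}}$ of Lemma~\ref{l.op.est}. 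Incidentally, your choice $\beta=r+1$ is the consistent one: the paper's proof text says $\beta=r$, which matches neither the hypothesis $2\alpha<r+1$ nor the conclusion exponent $r+1-\alpha$, so your reading quietly repairs a slip.
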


\begin{proof}
The proof consists in the application of the Newton-Kantorovich
method, as stated in the Proposition~\ref{prop:N-K}.  Indeed,
condition~\eqref{equilb-approx} holds true with $\beta=r$, because of
Lemma~\ref{lem:stima-approx-p.o.}.  Moreover, condition~\eqref{lipsc}
is also satisfied, in view of the analyticity of the Hamiltonian and
its vector field.  The third and last hypothesis~\eqref{inversa} is
about the invertibility of the Jacobian matrix $M(\epsilon)$ and on
the smallness of its eigenvalues. Due to Lemma~\ref{l.mat.red},
invertibility of $N(\epsilon)$ requires invertibility of the two
blocks $N_{11}(\epsilon)$ and $N_{22}(\epsilon)$; but
$N_{22}(\epsilon)$ is invertible because of the first Melnikov
condition, hence assuming $N_{11}(\epsilon)$ invertible ensures
invertibility of $N(\epsilon)$. Then, thanks to $|\lambda|\gtrsim
|\epsilon|^\alpha$ (with $\alpha<r$, which is guaranteed by the hypothesis
$2\alpha<r+1$) and by exploiting Proposition~\ref{p.min.eigen} in the
Appendix, invertibility is preserved under perturbations of order
$\Oscr(\epsilon^{r+1})$ and $|\nu|\gtrsim |\epsilon|^\alpha$ is ensured for
any $\nu\in\Sigma(M(\epsilon))$, thus proving the validity of~\eqref{inversa}.
\end{proof}

\begin{remark}
Let us note that in the nondegenerate case ($r=1$) it can be shown
(e.g. via a $\sqrt\epsilon$-scaling of the momenta $\hat p$, as in
\cite{Tre91}) that the eigenvalues are of order $\Oscr(\sqrt\epsilon)$ and
the existence of periodic orbits follows by direct application of the
implicit function theorem, taking $\alpha=\frac12$.
\end{remark}

\subsection{Approximate and effective linear stability}
\label{ssec:ls}

We come now to the investigation of the linear stability of the
approximate periodic orbit $x^*$ via the normal form construction. In
plain words, the normal form procedure has removed the time dependence
in the $\Phi(t;H^{(r)},x^*)$ up to order $\Oscr(\epsilon^{r})$, thus
reducing the problem of the approximate stability to the computation
of the eigenvalues of a constant matrix, in place of the Floquet
exponents.  To clarify this point, we introduce the variables $(\hat
Q,\hat P)$, representing the small displacements around the relative
equilibrium
\begin{equation*}
  Q_1 =  q_1 - \omega t - q_1(0)\ ,\qquad
  Q = q-q^*\ , \qquad
  P_1 = p_1\ , \qquad
  P = p\ .
\end{equation*}

Replacing the original variables $(\hat q, \hat p)$ with the new ones
$(\hat Q,\hat P)$ in~\eqref{frm:Z(r)}, i.e., the truncated Hamiltonian
in normal form at order $r$, and keeping only the quadratic terms, one
immediately obtains the Hamiltonian vector field linearized around
$x^*$.  Let us stress that, by normal form construction, the quadratic
Hamiltonian, and hence the linearized equations, is independent of
$Q_1$.

To represent the linearized Hamiltonian vector field, it is convenient
to introduce the matrices
\begin{alignat*}{2}
  B(\epsilon) &= D^2_qZ^{(r)}(\omega t+q_1(0),x^*) \ ,\qquad &G(\epsilon) &= D^2_{\xi}Z^{(r)}(\omega t+q_1(0),x^*)\ ,\\
  D(\epsilon) &= D^2_{q\hat p}Z^{(r)}(\omega t+q_1(0),x^*) \ ,\qquad &F(\epsilon) &= D^2_{\eta}Z^{(r)}(\omega t+q_1(0),x^*)\ ,\\
  C(\epsilon) &= D^2_{\hat p}Z^{(r)}(\omega t+q_1(0),x^*) \ ,\qquad &E(\epsilon) &= D^2_{\xi\eta}Z^{(r)}(\omega t+q_1(0),x^*)\ .
\end{alignat*}
We notice that the above matrices admit the asymptotic (analytic)
expansions in $\epsilon$
\begin{alignat*}{6}
  &B(\epsilon) &&= &&\epsilon B_1 + \Oscr(\epsilon^2)\ ,\qquad      &&G(\epsilon) &&= &&\epsilon G_1 + \Oscr(\epsilon^2)\ ,\\
  &D(\epsilon) &&= &&\epsilon D_1 + \Oscr(\epsilon^2)\ ,\qquad      &&F(\epsilon) &&= &&\epsilon F_1 + \Oscr(\epsilon^2)\ ,\\
  &C(\epsilon) &&=C_0 + &&\epsilon C_1 + \Oscr(\epsilon^2)\ ,\qquad &&E(\epsilon) &&= E_0 + &&\epsilon C_1 +
  \Oscr(\epsilon^2)\ ,\quad\hbox{with}\quad E_0=\text{diag}(\imunit\Omega_j)\ .
\end{alignat*}
In order to formally include the $Q_1$ dependence in the quadratic
Hamiltonian, we extend the matrices $B(\epsilon)$ and $D(\epsilon)$
(the last one being $(n_1-1)\times n_1$ rectangular) to $n_1\times
n_1$ square matrices. Precisely, we denote by $\matext{B(\epsilon)}$
the square-matrix obtained by adding a zero row and column,
respectively at the top and left of $B(\epsilon)$. Similarly, we
denote by $\matext{D(\epsilon)}$ the square-matrix obtained adding a
zero row at top of $D(\epsilon)$.  In this way, the quadratic
Hamiltonian that gives the linear approximation of the dynamics close
to the approximate periodic orbit reads
\begin{equation}
  \label{frm:square-K}
  Z^{(r)}_2 = \frac{1}{2} \langle \matext{B(\epsilon)} \hat Q,  \hat Q \rangle+
  \langle \matext{D(\epsilon)}\hat Q, \hat P\rangle
  +\frac12 \langle C(\epsilon) \hat P,  \hat P\rangle + \frac12 \langle G(\epsilon) \xi,\eta\rangle +
  \langle E(\epsilon) \xi,\eta \rangle + \frac12 \langle F(\epsilon)\eta , \eta \rangle  \ ,
\end{equation}
and the canonical linear vector field can be represented by a block
diagonal matrix $L(\epsilon)$ as
\begin{equation}
\label{frm:L}
L(\epsilon)=
\begin{pmatrix}
L_{11}(\epsilon) & 0\\
0 & L_{22}(\epsilon)
\end{pmatrix}\ ,
\end{equation}
with
$$
L_{11}(\epsilon)=
\begin{pmatrix}
    \phantom{-}\matext{D(\epsilon)}^\top & C(\epsilon)\\
    -\matext{B(\epsilon)} & -\matext{D(\epsilon)}
\end{pmatrix}
\quad\hbox{and}\quad
L_{22}(\epsilon)=
\begin{pmatrix}
    \phantom{-}E(\epsilon)^\top & \phantom{-}F(\epsilon)\\
    -G(\epsilon) & -E(\epsilon)
  \end{pmatrix}\ .
$$

As $L(\epsilon)$ is independent of time, the stability of the
approximate periodic orbit reduces to the study of the spectrum of
$L(\epsilon)$, which splits into two distinct components. The first
one is $\Sigma(L_{11}(\epsilon))$, made of $2n_1$ eigenvalues which
vanish as $\epsilon\to 0$, in view of the $n_1-1$ resonances on the
unperturbed invariant torus $I=I^*$. The second one is
$\Sigma(L_{22}(\epsilon))$.  From now on, we assume that
$L_{22}(\epsilon)$ is positive (negative) definite, so that
$\Sigma(L_{22}(\epsilon))$ is generically made of $n_2$ pairs of
conjugate imaginary eigenvalues $\imunit\Omega_j(\epsilon)$, at least
for $\epsilon$ small enough. The linear stability in the directions
transverse to the lower dimensional torus are guaranteed by the
assumptions on $L_{22}(\epsilon)$ to be positive definite, i.e.,
$\Omega_j(0)>0$ for $j=1,\ldots,n_2$.  Indeed, the origin is a
nondegenerate elliptic equilibrium for the unperturbed Hamiltonian
$g_0$, therefore it persists for $\epsilon$ small enough.  As a
consequence, the approximate linear stability of the periodic orbit
depends only on the \emph{vanishing part} of the spectrum
$\Sigma(L_{11}(\epsilon))$. On the contrary, if some harmonic
oscillators $\imunit\Omega_j\xi_j\eta_j$ are free to rotate with equal
frequencies, but in opposite directions, then the transverse
instability of the approximate periodic orbit can be produced by
collisions of eigenvalues having different Krein signature.

We now investigate to what extent the stability of the \emph{true}
periodic orbit can be inferred by the stability of the approximate
one, under suitable assumptions on $\Sigma(L_{11}(\epsilon))$.  In fact,
the distance between the monodromy matrix
$\Phi(T;H^{(r)},x^*_{{\rm p.o.}})$ and the matrix $\exp(L(\epsilon)T)$
is not only due to the normal form remainder, that is of order
$\Oscr(\epsilon^{r+1})$, but also to the approximation of the periodic
orbit, which is actually dominant, being of order
$\Oscr(\epsilon^{r+1-\alpha})$, with $2\alpha<r+1$. This is part of the
statement claimed in the next Theorem~\ref{t.stab.split}; this
statement also claims that the spectrum
$\Sigma(\Phi(T;H^{(r)},x^*_{{\rm p.o.}}))$ of the monodromy matrix splits into two different components, which are deformations of the
approximate ones.

\begin{theorem}
  \label{t.stab.split}
Consider the monodromy matrix $\Phi(T;H^{(r)},x^*_{{\rm p.o.}})$ and
its approximation given by $\exp\tond{L(\epsilon)T}$, with $L_{22}(\epsilon)$ positive
definite. Then for $|\epsilon|$ small enough the following holds true:
\begin{enumerate}

  \item there exists a positive constant $c_A$ such that one has
    \begin{equation}
      \label{e.mon.appr}
      \Phi(T;H^{(r)},x^*_{{\rm p.o.}}) = \exp\tond{L(\epsilon)T} +
      A\ ,\qquad\hbox{with}\quad \opnorm{A}\leq c_A|\epsilon|^{r+1-\alpha}\ ,
    \end{equation}
    where $\alpha$ is the same as in
    Theorem~\ref{teo:forma-normale-r};
    
  \item $\Sigma(\Phi(T;H^{(r)},x^*_{{\rm p.o.}}))=\Sigma_{11}
    \cup \Sigma_{22}$, where $\Sigma_{11}$ is close to
    $\Sigma(\exp(L_{11}(\epsilon) T))$ and includes at least two elements equal
    to $1$, while $\Sigma_{22}$ is close to $\Sigma(\exp(L_{22}(\epsilon) T))$
    and all its elements lie on the unit circle.
\end{enumerate}
\end{theorem}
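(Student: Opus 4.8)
The plan is to prove the two claims separately: the norm estimate~\eqref{e.mon.appr} as a quantitative comparison of fundamental matrices, and the spectral splitting as a perturbation statement built on top of it.

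For part~1 I would compare $\Phi(T;H^{(r)},x^*_{\rm p.o.})$ with $\exp(L(\epsilon)T)$, recalling from Lemma~\ref{l.mat.red} that $\exp(L(\epsilon)T)=\Phi(T;Z^{(r)},x^*)$ because $DX_{Z^{(r)}}$ is constant along the relative equilibrium. I would write the variational equation along the true orbit as a perturbation of the constant-coefficient system $\dot Y=L(\epsilon)Y$: along $\phi^t(x^*_{\rm p.o.})$ one has
$$
DX_{H^{(r)}}\bigl(\phi^t(x^*_{\rm p.o.})\bigr) = L(\epsilon) + R(t),
$$
and I would split $R(t)$ into two contributions. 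The first, $DX_{H^{(r)}}-DX_{Z^{(r)}}$, is governed by the normal-form remainder and is $\Oscr(\epsilon^{r+1})$; the second, $DX_{Z^{(r)}}(\phi^t(x^*_{\rm p.o.}))-DX_{Z^{(r)}}(x^*)$, is controlled by Lipschitz continuity of the analytic field times the orbit mismatch. By Theorem~\ref{teo:forma-normale-r} the initial mismatch is $\Oscr(\epsilon^{r+1-\alpha})$, and a Gronwall estimate over the fixed period $T=2\pi/\omega$ propagates it to the same order, so $\opnorm{R(t)}=\Oscr(\epsilon^{r+1-\alpha})$ uniformly on $[0,T]$, with the orbit-approximation term dominant since $\alpha>0$. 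The variation-of-constants formula then yields $\opnorm{A}\le c_A|\epsilon|^{r+1-\alpha}$, which is~\eqref{e.mon.appr}.

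For part~2 I would first exploit the block structure of $\exp(L(\epsilon)T)$. At $\epsilon=0$ the block $L_{11}(0)$ is nilpotent, since $B$ and $D$ vanish at $\epsilon=0$ while $C(0)=C_0$, so $\Sigma(\exp(L_{11}(\epsilon)T))$ is a cluster collapsing onto $1$ as $\epsilon\to0$; whereas $L_{22}(0)$ has purely imaginary spectrum $\{\pm\imunit\Omega_j\}$, so $\Sigma(\exp(L_{22}(\epsilon)T))$ lies on the unit circle, and the first Melnikov condition~\eqref{melnikov1} (together with $T=2\pi/\omega$, which gives $\exp(\imunit\Omega_jT)\neq1$) keeps it away from $1$ by an $\Oscr(1)$ gap. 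Since $\opnorm{A}=\Oscr(\epsilon^{r+1-\alpha})$ is far smaller than this gap, a Riesz spectral-projection argument along a fixed contour separating the two clusters shows that the eigenvalue count inside is preserved, so $\Sigma(\Phi(T;H^{(r)},x^*_{\rm p.o.}))$ splits as $\Sigma_{11}\cup\Sigma_{22}$ with each piece $\Oscr(\epsilon^{r+1-\alpha})$-close to the corresponding approximate cluster. The two finer assertions I would derive from the structure of the \emph{true} monodromy rather than its approximation. Because $x^*_{\rm p.o.}$ is a genuine periodic orbit of the autonomous Hamiltonian $H^{(r)}$, its monodromy carries the two trivial Floquet multipliers equal to $1$, one from the direction tangent to the flow and one from energy conservation; as $\Sigma_{22}$ is bounded away from $1$, these two unit eigenvalues belong to $\Sigma_{11}$. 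For $\Sigma_{22}$ I would use that $\Phi(T;H^{(r)},x^*_{\rm p.o.})$ is symplectic, so its eigenvalues come in reciprocal and conjugate pairs; the elements of $\Sigma_{22}$ are close to $\exp(\pm\imunit\Omega_j(\epsilon)T)$ and inherit a \emph{definite} Krein signature from the positive (negative) definiteness of $L_{22}(\epsilon)$. Being isolated from $\Sigma_{11}$ and carrying a single Krein sign, they cannot collide with an opposite-signature eigenvalue, and Krein's strong-stability theorem forces them to remain exactly on the unit circle.

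The main obstacle is precisely this last point: proving that $\Sigma_{22}$ lies \emph{exactly} on the unit circle, since perturbation theory alone only gives closeness and one genuinely needs the symplectic structure of the monodromy together with the definite Krein signature supplied by the definiteness hypothesis on $L_{22}(\epsilon)$. A secondary care is checking that the $\Oscr(1)$ spectral gap between the two clusters survives the perturbation, which reduces to ensuring that the eigenvalue displacement, at worst $\Oscr(\opnorm{A}^{1/2n})$ in the presence of Jordan blocks in $L_{11}$, stays below the gap for $\epsilon$ small.
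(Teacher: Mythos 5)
Your part~1 is in substance the paper's own argument, made quantitative: the paper likewise decomposes the error into the normal-form remainder, of order $\Oscr(\epsilon^{r+1})$, and the orbit-approximation error, of order $\Oscr(\epsilon^{r+1-\alpha})$ from Theorem~\ref{teo:forma-normale-r}, and observes that the latter dominates; your Gronwall/variation-of-constants filling-in of this step is sound. The splitting of the spectrum via an $\Oscr(1)$ gap and spectral projections is also the paper's ``continuity and separation'' argument, and your identification of the two unit multipliers with the flow direction and energy conservation is correct --- indeed it supplies a detail the paper's proof leaves implicit.

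The genuine gap is in the final step, the claim that $\Sigma_{22}$ lies \emph{exactly} on the unit circle. You assert that the elements of $\Sigma_{22}$ ``carry a single Krein sign'' because $L_{22}(\epsilon)$ is definite, and hence cannot undergo an opposite-signature collision. For Floquet \emph{multipliers} this inference fails: in the standard complex convention the conjugate multipliers $e^{+\imunit\Omega_j T}$ and $e^{-\imunit\Omega_j T}$ carry \emph{opposite} Krein signature even when the transverse Hamiltonian block is positive definite. Definiteness protects the eigenvalues $\pm\imunit\Omega_j$ of the autonomous linearization, but exponentiation wraps the imaginary axis around the circle, so $e^{\imunit\Omega_j T}$ can coincide with $e^{-\imunit\Omega_k T}$; this happens precisely when $\Omega_j+\Omega_k=k_1\omega$ for some $k_1\in\interi$, and such a coincidence is an opposite-signature Krein collision, from which an arbitrarily small symplectic perturbation can eject multipliers off the circle --- this is just parametric resonance (the Mathieu equation with positive definite time-periodic Hamiltonian is the classical example of instability despite definiteness). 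What rules out these collisions is the second Melnikov condition~\eqref{melnikov2} (for $k_1\neq 0$) together with the same-sign assumption on the transverse frequencies (for $k_1=0$); your proposal never invokes~\eqref{melnikov2}, only~\eqref{melnikov1}, which you use solely to separate $\Sigma_{22}$ from the cluster at $1$. The paper's proof appeals at exactly this point to ``the Melnikov nonresonance conditions and the fact that transverse linear oscillators have frequencies with the same sign''. Once \eqref{melnikov2} is invoked, your Krein argument closes: the multipliers in $\Sigma_{22}$ are then pairwise separated by $\Oscr(1)$ (apart from harmless equal-signature coincidences coming from equal frequencies), each retains a definite signature under the $\Oscr(\epsilon^{r+1-\alpha})$ perturbation, and they cannot leave the circle without a prior opposite-signature collision.
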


\begin{proof}
First observe that, in view of the continuity and the separation of
the two spectra $\Sigma(L_{11}(\epsilon))$ and
$\Sigma(L_{22}(\epsilon))$, the spectrum of the monodromy matrix can
be split into two different parts. Moreover, in view of the
Melnikov nonresonance conditions and the fact that transverse linear
oscillators have frequencies with the same sign, Krein signature (see for
example \cite{YakS75,MacS98,AhnMS01,Meiss17,KenMey09}) ensures that
$\Sigma_{22}$, which is a deformation of $\Sigma(\exp(L_{22}(\epsilon)
T))$, lies on the unit circle.

In order to obtain the estimate of the error in~\eqref{e.mon.appr}, we
exploit the fact that the monodromy matrix is the differential of the
flow with respect to the initial datum.  Considering the matrix
$\Phi(T;Z^{(r)},x^*_{{\rm p.o.}})$, we take into account two different sources of
approximation: the one of the Hamiltonian $H^{(r)}$ with its normal
form $Z^{(r)}$ and the one due to the approximation of the initial
datum of the periodic orbit.  Hence, the error term consists of the
normal form remainder $\Oscr(\epsilon^{r+1})$ and of the error of the
periodic orbit, which is of order $\Oscr(\epsilon^{r+1-\alpha})$ (with
$ 2\alpha<r+1$), as it follows from Theorem~\ref{teo:forma-normale-r}.
The latter is the dominant one and this allows to conclude the proof.

%The estimate of the error in~\eqref{e.mon.appr} is instead a
%consequence of the standard Gronwall lemma applied to the variational
%equation. Precisely, the monodromy matrix is obtained by evaluating at
%period $T$ the independent solutions of the variational equation
%\begin{displaymath}
%\dot z = DX_{H^{(r)}}(x^*_{{\rm p.o.}}(t))z\ ,
%\end{displaymath}
%where $x^*_{{\rm p.o.}}$ is the periodic orbit given by
%Theorem~\ref{teo:forma-normale-r}. Instead, the approximate monodromy
%matrix $\exp(L(\epsilon)T)$ is given by the independent solutions of
%the variational equation
%\begin{displaymath}
%\dot \zeta = DX_{Z^{(r)}}(x^*(t))\zeta = L(\epsilon)\zeta\ ,
%\end{displaymath}
%where $x^*$ is the relative equilibrium of the normal form Hamiltonian
%vector field $X_{Z^{(r)}}$. Introducing the error $\delta=z-\zeta$,
%it turns out that
%\begin{displaymath}
%\dot\delta = DX_{H^{(r)}}(x^*_{{\rm p.o.}}(t))\delta +
%Q(t)\zeta(t)\ ,\qquad\delta(0)=0\ ,
%\end{displaymath}
%where
%\begin{displaymath}
%Q(t) = \quadr{DX_{H^{(r)}}(x^*) - DX_{H^{(r)}}(x^*_{{\rm p.o.}})} +
%DX_\quadr{H^{(r+1)}-Z^{(r+1)}}(x^*)\ .
%\end{displaymath}
%The error term $Q(t)$ consists of two different terms: the error due
%to the approximation of the periodic orbit, that is the dominant one,
%and the one due to the the normal form construction.  The
%estimate~\eqref{e.mon.appr} follows from
%Theorem~\ref{teo:forma-normale-r} and this concludes the proof.
\end{proof}

We remark that in view of Lemma~\ref{l.mat.red}, in
Theorem~\ref{teo:forma-normale-r} we can rewrite $M(\epsilon)$ as
\begin{equation}
  \label{e.M.appr}
M(\epsilon) = \tond{\exp\tond{L(\epsilon)T}-\Id}_{\flat} +
\Oscr(\epsilon^{r+1})\ .  
\end{equation}
As a consequence, the matrix $N_{11}(\epsilon)$ in~\eqref{e.small.eig} reads
$N_{11}(\epsilon) = (\exp(L_{11}(\epsilon)T)-\Id)_{\flat}$.

\medskip
We are now ready to state the result on the localization of the
eigenvalues for $\Sigma_{11}$ (which are all close to~$1$) by
exploiting the spectrum of $L_{11}(\epsilon)$ in the generic case of distinct
eigenvalues. The result is the following (see also Lemma~2
in~\cite{AhnMS01})
\begin{theorem}
\label{t.lin.stab.2}
Assume that $L_{11}(\epsilon)$ has $2n_1-2$ distinct non-zero eigenvalues
and let $\tilde c>0$ and $\beta<r+1-\alpha$, with $2\alpha<r+1$ as in
Theorem~\ref{teo:forma-normale-r}, be such that
\begin{equation}
\label{e.dist.eig}
|\lambda_j-\lambda_k|>\tilde c\epsilon^{\beta}\ ,\qquad\hbox{for
  all}\quad\lambda_{j},\lambda_{k}\in\Sigma(L_{11}(\epsilon))\setminus\graff{0}\ .
\end{equation}
Then there exists $\epsilon^*>0$ such that if $|\epsilon|<\epsilon^*$ and
$\mu=e^{\lambda T}\in\Sigma(\exp(L_{11}(\epsilon) T))$, there exists
one eigenvalue $\nu\in\Sigma_{11}$ inside the complex disk
$D_\epsilon(\lambda)=\graff{z\in\complessi\ :\ |z-\lambda|<c
  \epsilon^{r-\alpha+1}}$, with $c>0$ a suitable constant independent of
$\mu$.
\end{theorem}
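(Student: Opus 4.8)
The plan is to regard the true monodromy matrix as a small perturbation of its block-diagonal normal-form approximation and to localise its eigenvalues by a contour (Rouch\'e / winding-number) argument. First I would invoke Theorem~\ref{t.stab.split}, which provides the representation $\Phi(T;H^{(r)},x^*_{\rm p.o.})=\exp\tond{L(\epsilon)T}+A$ with $\opnorm{A}\le c_A|\epsilon|^{r+1-\alpha}$ together with the splitting $L=\mathrm{diag}(L_{11},L_{22})$. Since, by the first Melnikov condition~\eqref{melnikov1}, the multipliers in $\Sigma\tond{\exp(L_{22}(\epsilon)T)}$ lie on the unit circle and stay bounded away from $1$ uniformly in $\epsilon$, whereas every $\mu=e^{\lambda T}$ with $\lambda\in\Sigma(L_{11}(\epsilon))\setminus\graff{0}$ clusters at $1$, the two families are separated by an $\Oscr(1)$ gap. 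Hence it suffices to follow the eigenvalues of $\exp(L(\epsilon)T)+A$ near $1$, i.e.\ those branching from $M_{11}:=\exp(L_{11}(\epsilon)T)$, and to show that each such multiplier $\mu=e^{\lambda T}$ is shadowed by exactly one $\nu\in\Sigma_{11}$ inside $D_\epsilon(\lambda)$.

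Next I would transfer the separation hypothesis~\eqref{e.dist.eig} from the generators to the multipliers. Writing $\mu_j-\mu_k=e^{\lambda_jT}-e^{\lambda_kT}=T\,e^{\zeta T}(\lambda_j-\lambda_k)$ for a suitable $\zeta$ on the segment joining $\lambda_j$ and $\lambda_k$, and using that all these eigenvalues are $\Oscr(\epsilon^{\alpha})$ so that $e^{\zeta T}=1+\Oscr(\epsilon^{\alpha})$, one obtains $|\mu_j-\mu_k|\ge \tfrac12 T\tilde c\,\epsilon^{\beta}$ for $\epsilon$ small. Because the $2n_1-2$ nonzero eigenvalues of $L_{11}(\epsilon)$ are simple, each $\mu_k$ is a simple eigenvalue of $M_{11}$, and I would enclose it by a circle $\Gamma_k$ of radius $\rho=c\,\epsilon^{r-\alpha+1}$. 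The inequality $\beta<r+1-\alpha$ in the statement is exactly what guarantees $\rho\ll\epsilon^{\beta}$, so that the $\Gamma_k$ are pairwise disjoint, are disjoint from the cluster at $1$ (whose distance is $\gtrsim\epsilon^{\alpha}$) and from $\Sigma\tond{\exp(L_{22}T)}$, and each encloses precisely one eigenvalue of $\exp(L(\epsilon)T)$.

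The core step is a Rouch\'e-type comparison on each $\Gamma_k$. Using $\det\tond{zI-\exp(L(\epsilon)T)-A}=\det\tond{zI-\exp(L(\epsilon)T)}\det\tond{I-(zI-\exp(L(\epsilon)T))^{-1}A}$, the matrices $\exp(L(\epsilon)T)$ and $\Phi$ have the same number of eigenvalues inside $\Gamma_k$ as soon as $\opnorm{(zI-\exp(L(\epsilon)T))^{-1}A}<1$ there; by Lemma~\ref{l.mat.red} the trapped multiplier is then the sought $\nu\in\Sigma_{11}$. To bound the resolvent I would diagonalise $M_{11}=V\,\mathrm{diag}(\mu_j)\,V^{-1}$ and write $(zI-M_{11})^{-1}=\sum_j P_j/(z-\mu_j)$, so that on $\Gamma_k$ its norm is dominated by $\|P_k\|/\rho$, the remaining poles contributing only $\sum_{j\ne k}\|P_j\|/|z-\mu_j|\lesssim\epsilon^{-\beta}\max_j\|P_j\|$ because $|z-\mu_j|\gtrsim\epsilon^{\beta}\gg\rho$ for $j\ne k$. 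The localisation radius is therefore set by $\opnorm{A}$ times the spectral-projector norm, and the whole estimate reduces to controlling $\|P_k\|$.

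The \textbf{main obstacle} is precisely this non-normality: since {\bf H1} forces $C_0$ to be invertible, the limit matrix $L_{11}(0)=\tond{\begin{smallmatrix}0&C_0\\0&0\end{smallmatrix}}$ is nilpotent, so the eigenvectors attached to the splitting pairs $\pm\lambda$ coalesce and the projectors $P_k$, equivalently the reciprocals $1/|w_k^{*}v_k|$ of the eigenvalue condition numbers, blow up as $\epsilon\to0$. The argument hinges on bounding $\|P_k\|$ via the separation~\eqref{e.dist.eig} and the leading ($\sqrt\epsilon$-type) scaling of the splitting pinned down by {\bf H1}; once such a bound is available, the product $\opnorm{A}\,\|P_k\|$ fixes the localisation radius, the Rouch\'e comparison $\opnorm{(zI-\exp(L(\epsilon)T))^{-1}A}<1$ holds on every $\Gamma_k$ for $|\epsilon|<\epsilon^{*}$, and one obtains exactly one $\nu\in\Sigma_{11}$ in $D_\epsilon(\lambda)$, with a constant $c$ independent of $\mu$ thanks to the uniform scaling of the $\mu_k$. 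This quantitative conditioning estimate for a matrix emanating from a defective point is precisely the input of Lemma~2 in~\cite{AhnMS01}, which I would adapt to the Hamiltonian block $L_{11}(\epsilon)$.
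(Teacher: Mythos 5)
Your first three steps coincide with the paper's own proof: the paper likewise starts from the representation~\eqref{e.mon.appr} of Theorem~\ref{t.stab.split}, $\Phi(T;H^{(r)},x^*_{\rm p.o.})=\exp\tond{L(\epsilon)T}+A$ with $\opnorm{A}\leq c_A|\epsilon|^{r+1-\alpha}$, uses the block splitting to isolate the multipliers clustering at $1$, and transfers the gap hypothesis~\eqref{e.dist.eig} from the exponents to the multipliers via the observation that $e^{\lambda_j T}-e^{\lambda_k T}$ equals $\lambda_j-\lambda_k$ at leading order. At that point, however, the paper simply invokes Proposition~\ref{p.def.eigen} of the Appendix (with $\beta_1=r+1-\alpha$ and $\beta_2=\beta$), a localization result proved through the resolvent estimates of Lemma~\ref{p.local.sp}, whereas you attempt a Rouch\'e comparison based on the spectral decomposition $(z\Id-M_{11})^{-1}=\sum_j P_j/(z-\mu_j)$.

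It is in this last step that your proposal has a genuine gap: the argument is left conditional on a bound for the projector norms $\norm{P_k}$, which you announce (``once such a bound is available'') but never establish, deferring to an unspecified adaptation of Lemma~2 of~\cite{AhnMS01}. This is not a peripheral technicality, because in your scheme the Rouch\'e condition on a circle of radius $\rho$ around $\mu_k$ reads essentially $\opnorm{A}\,\norm{P_k}/\rho<1$, so the radius you can certify is $\rho\sim\opnorm{A}\,\norm{P_k}$, and the radius $c\,\epsilon^{r+1-\alpha}$ claimed in the theorem follows only if $\norm{P_k}=\Oscr(1)$ uniformly as $\epsilon\to0$. As you yourself observe, that uniformity is precisely what is in doubt: $L_{11}(0)$ is defective, and when the nonzero eigenvalues split from it in coalescing pairs the eigenvalue condition numbers, hence the $\norm{P_k}$, generically diverge as $\epsilon\to 0$ like a negative power of $\epsilon$; in that case your estimate only yields a disk strictly larger than the one claimed. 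So the decisive quantitative input is missing, and the proof as written does not reach the stated localization radius. (For completeness: the paper's route through Lemma~\ref{l.op.est} and Proposition~\ref{p.def.eigen} compresses this same conditioning issue into the constant $c_{\textsf{op}}$, claimed to depend on the dimension only; your diagnosis of where the difficulty sits is therefore accurate, but identifying the obstacle is not the same as overcoming it, and without the bound on $\norm{P_k}$ your argument is incomplete exactly where it matters.)
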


\begin{proof}
The proof follows from Proposition~\ref{p.def.eigen} in the Appendix,
by exploiting~\eqref{e.mon.appr} and the fact that the difference
between the Floquet multipliers close to $1$, $e^{\lambda_j
  T}-e^{\lambda_k T}$, is, at leading order, the same as the exponents
$\lambda_j-\lambda_k$.
\end{proof}

\begin{corollary}
\label{c.stab}
  Under the assumptions of Theorem~\ref{t.lin.stab.2} the periodic
  orbit $x^*_{{\rm p.o.}}$ is linearly stable if (and only if) the same holds
  for the approximate periodic orbit $x^*$. In the unstable case, the
  number of hyperbolic directions of the periodic orbit $x^*_{{\rm p.o.}}$ is
  the same as for $x^*$.
\end{corollary}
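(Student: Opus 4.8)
The plan is to deduce Corollary~\ref{c.stab} directly from the localization result in Theorem~\ref{t.lin.stab.2}, exploiting that linear stability of a symplectic map is governed by whether all Floquet multipliers lie on the unit circle (and, in the unstable case, by counting the multipliers off the circle). First I would recall the setup: by Lemma~\ref{l.mat.red} and~\eqref{e.M.appr}, the relevant part of the monodromy matrix $\Phi(T;H^{(r)},x^*_{{\rm p.o.}})$ decomposes, up to order $\Oscr(\epsilon^{r+1-\alpha})$, into the block structure inherited from $L(\epsilon)$. The transverse block $\Sigma_{22}$ was already shown in Theorem~\ref{t.stab.split} to lie on the unit circle by the Krein-signature argument, so it contributes no instability; hence the stability question reduces entirely to the $\Sigma_{11}$ component, i.e.\ to the multipliers close to $1$.

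Next I would apply Theorem~\ref{t.lin.stab.2} to control $\Sigma_{11}$ eigenvalue by eigenvalue. Under the distinct-eigenvalue hypothesis~\eqref{e.dist.eig}, each true multiplier $\nu\in\Sigma_{11}$ sits inside the disk $D_\epsilon(\lambda)$ of radius $c\,\epsilon^{r-\alpha+1}$ centered at the corresponding approximate multiplier $\mu=e^{\lambda T}$ of $\exp(L_{11}(\epsilon)T)$. The key observation is that~\eqref{e.dist.eig} guarantees the approximate multipliers are separated by $\Oscr(\epsilon^\beta)$ with $\beta<r+1-\alpha$, which dominates the localization radius $\epsilon^{r-\alpha+1}$; thus each true multiplier is trapped in a disk too small to cross from one side of the unit circle to the other, or to leave a neighbourhood of a multiplier lying \emph{on} the circle without the approximate one already doing so. Since the multipliers of a real symplectic matrix come in quadruples $\{\nu,\bar\nu,1/\nu,1/\bar\nu\}$, a multiplier is on the unit circle for the true orbit precisely when its approximate counterpart is, for $\epsilon$ small enough. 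This matching, applied to every eigenvalue, shows that $x^*_{{\rm p.o.}}$ is linearly stable if and only if every $\mu\in\Sigma(\exp(L_{11}(\epsilon)T))$ lies on the unit circle, i.e.\ if and only if $x^*$ is linearly stable.

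For the unstable case I would argue by the same trapping: the number of hyperbolic directions equals the number of multipliers strictly off the unit circle, counted with the symplectic pairing. Because each true multiplier $\nu$ remains within the $\Oscr(\epsilon^{r-\alpha+1})$ disk around its approximate $\mu$, and the approximate multipliers are $\Oscr(\epsilon^\beta)$-separated with $\beta<r+1-\alpha$, no true multiplier can migrate onto the unit circle from off it, nor vice versa; the count of off-circle multipliers is therefore preserved, giving the same number of hyperbolic directions for $x^*_{{\rm p.o.}}$ as for $x^*$.

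I expect the main obstacle to be the borderline case in which an approximate multiplier sits \emph{exactly} on the unit circle, so that the $\Oscr(\epsilon^{r-\alpha+1})$ localization disk straddles it and does not by itself certify on-circle persistence for the true multiplier. This is precisely where Krein theory is indispensable rather than a mere convenience: for the stable configuration one must invoke that the approximate elliptic multipliers have definite Krein signature, so that a genuinely symplectic perturbation of size $\Oscr(\epsilon^{r-\alpha+1})$ cannot split them off the circle without a Krein collision, which the distinct-eigenvalue and separation hypotheses~\eqref{e.dist.eig} forbid. Handling this degenerate-on-the-circle situation cleanly, and tying the ``only if'' direction to the robustness of the off-circle multipliers, is the delicate point; the rest is a direct consequence of the quantitative localization already established.
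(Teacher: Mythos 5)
Your proposal is correct and takes essentially the same route the paper intends: the paper states Corollary~\ref{c.stab} without proof, as an immediate consequence of the eigenvalue localization in Theorem~\ref{t.lin.stab.2}, and your argument supplies precisely the details left implicit --- disjointness of the localization disks (since $\beta<r+1-\alpha$), the one-multiplier-per-disk correspondence, and preservation of the on/off-circle dichotomy. Your treatment of the delicate borderline case (an approximate multiplier lying exactly on the unit circle) via symplectic pairing and definite Krein signature of simple eigenvalues is the same ingredient the paper already invokes for $\Sigma_{22}$ in Theorem~\ref{t.stab.split} (citing~\cite{AhnMS01}), so it is fully consistent with the paper's framework.
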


%%%%%%%%%%%%%%%%%%%%%%%%%%%%%%%%%%%%%%%%%%%%%%%%%%%%%%%%%%%%%%%%%%%%%%%%%

\section{Normal form algorithm}
\label{sec:nfalg}

In this section, by using the formalism of Lie series (see
\cite{BenGGS84,GioLoc-1997}), we detail the generic step of the normal
form algorithm that takes the Hamiltonian at order $r$ and brings it
into normal form up to order $r+1$.  We stress here that the normal
form algorithm is a completely constructive procedure that can be
effectively implemented by means of computer algebra, see,
e.g.,~\cite{GioSan-2012}.

The relevant algebraic property of the $\Pset_\ell$ classes 
of functions is stated by the following

\begin{lemma}
  \label{lem:poisson}
  Let $f\in\Pscr_{s_1}$ and $g\in\Pscr_{s_2}$, then
  $\{f,g\}\in\Pscr_{s_1+s_2-2}$.
 \end{lemma}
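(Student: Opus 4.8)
The plan is to work directly from the Taylor–Fourier representation in $\eqref{frm:funz}$ and check that the Poisson bracket of a monomial in $\Pscr_{s_1}$ with one in $\Pscr_{s_2}$ lands in $\Pscr_{s_1+s_2-2}$; since the grading $\ell=2l+m$ is additive over products and the classes $\Pscr_\ell$ are closed under linear combination, it suffices to treat single monomials and then sum. So first I would fix typical generators $f = I^{i}\,e^{\imunit\langle k,\varphi\rangle}\xi^{m_1}\eta^{m_2}\in\widehat{\Pscr}_{l_1,m_1'}$ and $g = I^{i'}\,e^{\imunit\langle k',\varphi\rangle}\xi^{n_1'}\eta^{n_2'}\in\widehat{\Pscr}_{l_2,m_2'}$, where $|i|=l_1$, $|m_1|+|m_2|=m_1'$ (and similarly for $g$), so that $f\in\Pscr_{2l_1+m_1'}$ and $g\in\Pscr_{2l_2+m_2'}$.

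Next I would compute the bracket term by term. With the natural symplectic pairing, the bracket splits into the action–angle part $\sum_j\bigl(\partial_{I_j}f\,\partial_{\varphi_j}g-\partial_{\varphi_j}f\,\partial_{I_j}g\bigr)$ and the Cartesian part $\sum_j\bigl(\partial_{\xi_j}f\,\partial_{\eta_j}g-\partial_{\eta_j}f\,\partial_{\xi_j}g\bigr)$. The key observation is a simple degree count. In the $(I,\varphi)$ part, $\partial_{I_j}$ lowers the degree in $I$ by one (so lowers $\ell$ by $2$) while $\partial_{\varphi_j}$ leaves the $I$- and $(\xi,\eta)$-degrees untouched (so preserves $\ell$); multiplying the two factors therefore produces a monomial of total degree $(2l_1-2)+2l_2+m_1'+m_2'=2(l_1+l_2)-2+m_1'+m_2'=(s_1+s_2)-2$. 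The differentiation in $\varphi$ only brings down a factor $\imunit k_j$ or $\imunit k'_j$ and combines the exponentials into $e^{\imunit\langle k+k',\varphi\rangle}$, which is again a legitimate Fourier mode, so the angle dependence stays within the allowed form. In the $(\xi,\eta)$ part, each of $\partial_{\xi_j}$ and $\partial_{\eta_j}$ lowers the $(\xi,\eta)$-degree $m$ by one, so the product of the two derivatives lowers $m$ by $2$ (i.e. lowers $\ell$ by $2$), while the $I$-degrees simply add; the total degree is again $2(l_1+l_2)+(m_1'+m_2'-2)=(s_1+s_2)-2$. In every case the resulting monomials have the same total degree $s_1+s_2-2$, so each lies in some $\widehat{\Pscr}_{l,m}$ with $2l+m=s_1+s_2-2$, hence in $\Pscr_{s_1+s_2-2}$.

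Finally I would observe that the bracket of the two monomials is a finite sum of such terms, all of degree $s_1+s_2-2$, and that a general $f\in\Pscr_{s_1}$, $g\in\Pscr_{s_2}$ is a (possibly infinite) convergent combination of such monomials; by bilinearity of the Poisson bracket and closedness of $\Pscr_{s_1+s_2-2}$ under (convergent) linear combinations, the claim follows. The only point requiring a little care — really the single substantive step — is the bookkeeping that each partial derivative shifts the grading $\ell=2l+m$ by exactly the right amount: $-2$ for $\partial_{I_j}$, $0$ for $\partial_{\varphi_j}$, and $-1$ for each of $\partial_{\xi_j},\partial_{\eta_j}$, so that the two cross terms in the bracket both drop $\ell$ by precisely $2$. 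Everything else is routine bilinearity and the verification that Fourier modes and Taylor exponents recombine into admissible monomials; there is no genuine obstacle, only the need to track degrees consistently.
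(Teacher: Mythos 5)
Your proposal is correct and is exactly the degree-counting verification the paper has in mind: the paper gives no written proof at all, stating only that the result ``is a trivial consequence of the definition of the Poisson bracket,'' and your bookkeeping (each $\partial_{I_j}$ drops $\ell$ by $2$, each $\partial_{\xi_j},\partial_{\eta_j}$ by $1$, and $\partial_{\varphi_j}$ by $0$, so both blocks of the bracket drop the total grading by exactly $2$) is precisely that trivial consequence spelled out. Nothing in your argument deviates from the intended route, so there is nothing further to reconcile.
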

The proof is left to the reader, being a trivial consequence
of the definition of the Poisson bracket.

\subsection{Generic r-th normalization step}

We summarize the five stages of a generic r-th normalizing step.
The starting Hamiltonian has the form
\begin{equation} \label{frm:H(r-1)}
\begin{aligned}
H^{(r-1)}&= \omega p_1 +\sum_{j=1}^{n_2} \imunit \Omega_j\xi_j\eta_j \\ 
                &\quad +\sum_{s<r} f_{0}^{(r-1,s)}
                +\sum_{s<r} f_{2}^{(r-1,s)}
                +\sum_{s<r} f_{3}^{(r-1,s)}+\sum_{s<r} f_{4}^{(r-1,s)}\\
                &\quad +f_{0}^{(r-1,r)} +f_{1}^{(r-1,r)} +f_{2}^{(r-1,r)}+f_{3}^{(r-1,r)}
                +f_{4}^{(r-1,r)} \\ 
                &\quad +\sum_{s>r} f_{0}^{(r-1,s)}
                +\sum_{s>r} f_{1}^{(r-1,s)} +\sum_{s>r} f_{2}^{(r-1,s)}
                +\sum_{s>r} f_{3}^{(r-1,s)}+\sum_{s>r} f_{4}^{(r-1,s)}\\            
                &\quad +\sum_{s\geq 0}\sum_{\ell>2}f_{\ell}^{(r-1,s)} \ .
\end{aligned}
\end{equation}
where $f_{0}^{(r-1,s)}$, $f_{2}^{(r-1,s)}$, $f_{3}^{(r-1,s)}$ and $f_{4}^{(r-1,s)}$,
for $1\leq s<r$, are in normal form. 

\subsubsection{First stage of the r-th normalization step}

We average the term $f_{0}^{(r-1,r)}$ with respect to the fast angle $q_1$, 
determining the generating function
$$
\chi_{0}^{(r)}(\hat{q})= X^{(r)}_{0}(\hat{q})+
\langle\zeta^{(r)},\hat{q}\rangle  \qquad \text{with} \qquad \zeta^{(r)}\in\reali^{n_1} \ ,
$$
belonging to $\Pset_0$ and of order $\Oscr(\epsilon^r)$, by solving
the homological equations
\begin{equation*}
  \begin{aligned}
   & \lie{X^{(r)}_{0}} \omega p_1
    + f_{0}^{(r-1,r)}  =\langle f_{0}^{(r-1,r)} \rangle_{q_1} \ ,\\
   & \lie{\prsca{\zeta^{(r)}}{{\hat q}}} f_{4}^{(0,0)}\Bigr|_{\xi=\eta=0} +
    \Bigl\langle
     f_{2}^{(r-1,r)}\Bigr|_{\xi=\eta=0\atop
    q=q^*}\Bigr\rangle_{q_1}
    = 0\ .
  \end{aligned}
\end{equation*}
By considering the Taylor-Fourier expansion
$$
 f_{0}^{(r-1,r)}(\hat{q})= \sum_{k} c_{0,0,0,k}^{(r-1,r)}\exp(\imunit
\langle k, \, \hat{q} \rangle) \ ,
$$
we obtain
$$
X^{(r)}_{0}(\hat{q}) =\sum_{k_1\neq0} \frac{c_{0,0,0,k}^{(r-1,r)}}{\imunit
  k_1\omega} \exp(\imunit \langle k, \, \hat{q} \rangle) \ .
$$
The vector $\zeta^{(r)}$ is determined by solving the 
linear system  
\begin{equation}
\sum_j C_{0,ij} \zeta_j^{(r)} =
\parder{}{{\hat p_i}}\Bigl\langle f_{2}^{(r-1,r)}
\Bigr|_{\xi=\eta=0 \atop q=q^*}\Bigr\rangle_{q_1} \ .
\label{frm:traslazioneR}
\end{equation}

The transformed Hamiltonian is computed as
\begin{equation}
\begin{aligned} \label{Ham-Istage-r-step}
H^{(\rmI;r-1)}&= \exp\Bigl(L_{\chi^{(r)}_{0}}\Bigr) H^{(r-1)}= \\ 
                &= \omega p_1 +\sum_{j=1}^{n_2} \imunit \Omega_j\xi_j\eta_j \\ 
                &\quad +\sum_{s<r} f_{0}^{(\rmI;r-1,s)}
                +\sum_{s<r} f_{2}^{(\rmI;r-1,s)}
                +\sum_{s<r} f_{3}^{(\rmI;r-1,s)} +\sum_{s<r} f_{4}^{(\rmI;r-1,s)}\\             
                &\quad +f_{0}^{(\rmI;r-1,r)} +f_{1}^{(\rmI;r-1,r)}
                +f_{2}^{(\rmI;r-1,r)} +f_{3}^{(\rmI;r-1,r)} +f_{4}^{(\rmI;r-1,r)} \\ 
                &\quad +\sum_{s>r} f_{0}^{(\rmI;r-1,s)}
                +\sum_{s>r} f_{1}^{(\rmI;r-1,s)} +\sum_{s>r} f_{2}^{(\rmI;r-1,s)}
                +\sum_{s>r} f_{3}^{(\rmI;r-1,s)} +\sum_{s>r} f_{4}^{(\rmI;r-1,s)}\\            
                &\quad +\sum_{s\geq 0}\sum_{\ell>2}f_{\ell}^{(\rmI;r-1,s)} \ .
\end{aligned} 
\end{equation} 
The functions $f_{\ell}^{(\rmI;r-1,s)}$ are recursively defined as
\begin{equation} \label{fzn-ricorsive-Istage-r-step}
\begin{aligned}
f_{0}^{(\rmI;r-1,r)} &= \langle f_{0}^{(r-1,r)} \rangle_{q_1} \ , \\
f_{\ell}^{(\rmI;r-1,s)}&= \sum_{j=0}^{\lfloor s/r \rfloor} \frac{1}{j!}
                        L_{\chi^{(r)}_{0}}^{j} f^{(r-1,s-jr)}_{\ell+2j} \ ,
                  \qquad\qquad\hbox{for }{\vtop{\hbox{$\ell=0,\ s\neq r$\ ,}
                  \vskip-2pt\hbox{\hskip-1pt or $\ell\neq0 \ s\geq 0$ \ ,}}}
\end{aligned}
\end{equation}
with $f_{\ell}^{(\rmI;r-1,s)} \in \Pscr_{\ell}$.

\subsubsection{Second stage of the r-th normalization step}

We now remove the term $f_{1}^{(\rmI;r-1,r)}$ by means of
the generating function $\chi^{(r)}_{1}$, belonging 
to $\Pset_1$ and of order $\Oscr(\epsilon^r)$, by solving the homological 
equation
\begin{equation} \label{omologica-IIstage-r-step}
L_{\chi^{(r)}_{1}}\Bigl({\omega}{p_1}+ \sum_{j=1}^{n_2} \imunit\Omega_j \xi_{j} \eta_{j}\Bigr)
+f_{1}^{(\rmI;r-1,r)}= 0 \ .
\end{equation}
Considering again the Taylor-Fourier expansion 
$$
f_{1}^{(\rmI;r-1,r)}(\hat{q},\xi,\eta)=\sum_{|m_1|+|m_2|=1\atop
  k}\, c_{0,m_1,m_2,k}^{(\rmI;r-1,r)} \exp(\imunit \langle k, \, \hat{q} \rangle
)\xi^{m_1} \eta^{m_2} \ ,
$$ 
we get
$$
\chi^{(r)}_{1}(\hat{q},\xi,\eta)=\sum_{|m_1|+|m_2|=1\atop
  k}\, \,\frac{c_{0,m_1,m_2,k}^{(\rmI;r-1,r)}\,\exp(\imunit
  \langle k, \, \hat{q} \rangle)\,\xi^{m_1} \eta^{m_2} }{\imunit
  \big[k_1\omega+ \langle m_1-m_2, \, \Omega\rangle\big]} \ .
$$
with $\Omega\in\reali^{n_2}$.

The transformed Hamiltonian is calculated as
\begin{equation}
\begin{aligned} \label{Ham-IIstage-r-step}
H^{(\rmII;r-1)} &= \exp\left(L_{\chi^{(r)}_{1}}\right)H^{(\rmI;r-1)}= \\
              &= \omega p_1 +\sum_{j=1}^{n_2} \imunit \Omega_j\xi_j\eta_j \\ 
                &\quad +\sum_{s<r} f_{0}^{(\rmII;r-1,s)}
                +\sum_{s<r} f_{2}^{(\rmII;r-1,s)}
                +\sum_{s<r} f_{3}^{(\rmII;r-1,s)} +\sum_{s<r} f_{4}^{(\rmII;r-1,s)}\\
                &\quad +f_{0}^{(\rmII;r-1,r)}
                +f_{2}^{(\rmII;r-1,r)} +f_{3}^{(\rmII;r-1,r)} +f_{4}^{(\rmII;r-1,r)} \\ 
                &\quad +\sum_{s>r} f_{0}^{(\rmII;r-1,s)}
                +\sum_{s>r} f_{1}^{(\rmII;r-1,s)} +\sum_{s>r} f_{2}^{(\rmII;r-1,s)}
                +\sum_{s>r} f_{3}^{(\rmII;r-1,s)} +\sum_{s>r} f_{4}^{(\rmII;r-1,s)}\\            
                &\quad +\sum_{s\geq 0}\sum_{\ell>2}f_{\ell}^{(\rmII;r-1,s)} \ ,
\end{aligned}
\end{equation}
with
\begin{equation} \label{fzn-ricorsive-IIstage-r-step}
\begin{aligned}
f_{1}^{(\rmII;r-1,r)} &= 0 \ , \\ 
f_{0}^{(\rmII;r-1,2r)} &= f_{0}^{(\rmI;r-1,2r)} 
                       + L_{\chi^{(r)}_{1}}f_{1}^{(\rmI;r-1,r)} 
                       +\frac{1}{2} L_{\chi^{(r)}_{1}} \left( L_{\chi^{(r)}_{1}}
                       f_{2}^{(\rmI;r-1,0)} \right)= \\ 
                       &= f_{0}^{(\rmI;r-1,2r)} +
                       \frac{1}{2} L_{\chi^{(r)}_{1}}f_{1}^{(\rmI;r-1,r)} \ , \\ 
f_{\ell}^{(\rmII;r-1,s)} &= \sum_{j=0}^{\lfloor s/r \rfloor} \frac{1}{j!}
                       L_{\chi^{(r)}_{1}}^{j} f^{(\rmI;r-1,s-jr)}_{\ell+j},
              \qquad\qquad\hbox{for }{\vtop{\hbox{$\ell=0,\ s\neq 2r$\ ,}
                  \vskip-2pt\hbox{\hskip-1pt or $\ell=1 \ s\neq r$ \ ,}
                  \vskip-2pt\hbox{\hskip-1pt or $\ell\geq2 \ s\geq 0$ \ ,}}}
\end{aligned}
\end{equation}
where we have exploited~\eqref{omologica-IIstage-r-step}.

\subsubsection{Third stage of the r-th normalization step}

We now average the term $f_{2}^{(\rmII;r-1,r)}$
with respect to the fast angle $q_1$,
determining the generating function $\chi^{(r)}_{2}$, belonging 
to $\Pset_2$ and of order $\Oscr(\epsilon^r)$, by solving the homological 
equation
\begin{equation} \label{omologica-IIIstage-r-step}
L_{\chi^{(r)}_{2}} \Bigl({\omega}{p_1}+ \sum_{j=1}^{n_2} \imunit\Omega_j \xi_{j} \eta_{j}\Bigr) +
f_{2}^{(\rmII;r-1,r)} = \langle
f_{2}^{(\rmII;r-1,r)}\rangle_{q_1} \ .
\end{equation}
Therefore, considering the Taylor-Fourier expansion
$$
f_{2}^{(\rmII;r-1,r)}(\hat{p},\hat{q},\xi,\eta)
=\sum_{|l|=1\atop k}\, c_{l,0,0,k}^{(\rmII;r-1,r)}\hat{p}^{l}\exp(\imunit
\langle k, \, \hat{q} \rangle) +\sum_{|m_1|+|m_2|=2\atop
  k}\, c_{0,m_1,m_2,k}^{(\rmII;r-1,r)} \exp(\imunit \langle k, \, \hat{q} \rangle
)\xi^{m_1} \eta^{m_2} \ ,
$$
we obtain
$$
 \chi_{2}^{(r)}(\hat{p},\hat{q},\xi,\eta) =\sum_{|l|=1\atop k_1\neq0}\,
\frac{c_{l,0,0,k}^{(\rmII;r-1,r)}\hat{p}^{l}\exp(\imunit
  \langle k, \, \hat{q} \rangle)}{\imunit k_1 \omega}+
  \sum_{|m_1|+|m_2|=2\atop
  k_1\neq 0}\, \,\frac{c_{0,m_1,m_2,k}^{(\rmII;r-1,r)}\,\exp(\imunit
  \langle k, \, \hat{q} \rangle)\,\xi^{m_1} \eta^{m_2} }{\imunit
  \big[k_1\omega+ \langle m_1-m_2, \, \Omega\rangle\big]} \ .
$$
The transformed Hamiltonian is computed as
\begin{equation*}
H^{(\rmIII;r-1)} = \exp\left(L_{\chi_{2}^{(r)}}\right)H^{(\rmII;r-1)}
\end{equation*}
and is in the form~\eqref{Ham-IIstage-r-step}, replacing 
the upper index II by III, with $f_{\ell}^{(\rmIII;r-1,s)}\in\Pset_\ell$ given by
\begin{equation} \label{fzn-ricorsive-IIIstage-r-step}
\begin{aligned}
f_{2}^{(\rmIII;r-1,r)} &= \langle f_{2}^{(\rmII;r-1,r)}\rangle_{q_1} \ , \\
f_{2}^{(\rmIII;r-1,ri)} &= \frac{1}{(i-1)!}L_{\chi^{(r)}_{2}}^{i-1} \left(
               f_{2}^{(\rmII;r-1,r)} +\frac{1}{i} L_{\chi^{(r)}_{2}}
               f^{(\rmII;r-1,0)}_{2} \right) \\ 
               &\quad+ \sum_{j=0}^{i-2} \frac{1}{j!} L_{\chi^{(r)}_{2}}^{j}
                f^{(\rmII;r-1,ri-rj)}_{2}  \\ 
                &= \frac{1}{(i-1)!}L_{\chi^{(r)}_{2}}^{i-1} \left(
                 \frac{1}{i}\langle f_{2}^{(\rmII;r-1,r)}\rangle_{q_1}
                 +\frac{i-1}{i}f^{(\rmII;r-1,r)}_{2}\right) \\ 
                 &\quad+ \sum_{j=0}^{i-2} \frac{1}{j!}
                  L_{\chi^{(r)}_{2}}^{j} f^{(\rmII;r-1,ri-rj)}_{2} \ , \\ 
f_{\ell}^{(\rmIII;r-1,s)} &= \sum_{j=0}^{\lfloor s/r \rfloor} \frac{1}{j!} L_{\chi^{(r)}_{2}}^{j}
                  f^{(\rmII;r-1,s-jr)}_{\ell} \ ,
                  \qquad\qquad\hbox{for }{\vtop{\hbox{$\ell=2,\ s\neq ri$\ ,}
                  \vskip-2pt\hbox{\hskip-1pt or $\ell\neq2, \ s\geq 0$ \ ,}}}
\end{aligned}
\end{equation}
where we have used the homological equation~\eqref{omologica-IIIstage-r-step}.

\subsubsection{Fourth stage of the r-th normalization step}

We now remove the term 
${f}_{1,1}^{(\rmIII;r-1,r)}\in\Pscr_{3}$,
which depends both on the actions
and on the transverse variables $\xi$, $\eta$.
We determine the generating function $\chi^{(r)}_{3}$, belonging 
to $\Pset_3$ and of order $\Oscr(\epsilon^r)$, by solving the homological 
equation
\begin{equation} \label{omologica-IVstage-r-step}
L_{\chi^{(r)}_{3}}\Bigl({\omega}{p_1}+ \sum_{j=1}^{n_2} \imunit\Omega_j
\xi_{j} \eta_{j}\Bigr) +{f}_{1,1}^{(\rmIII;r-1,r)}= 0 \ .
\end{equation}
Hence, considering the Taylor-Fourier expansion 
$$
{f}_{1,1}^{(\rmIII;r-1,r)}(\hat{q},\hat{p},\xi,\eta)=\sum_{{|l|=1\atop |m_1|+|m_2|=1}\atop
  k}\, c_{l,m_1,m_2,k}^{(\rmIII;r-1,r)} \exp(\imunit \langle k, \, \hat{q} \rangle
)\hat{p}^l \xi^{m_1} \eta^{m_2} \ ,
$$ 
we get
$$
\chi^{(r)}_{3}(\hat{q},\hat{p},\xi,\eta)=\sum_{{|l|=1\atop |m_1|+|m_2|=1}\atop
  k}\, \,\frac{c_{l,m_1,m_2,k}^{(\rmIII;r-1,r)}\,\exp(\imunit
  \langle k, \, \hat{q} \rangle)\,\hat{p}^l\xi^{m_1} \eta^{m_2} }{\imunit
  \big[k_1\omega+ \langle m_1-m_2, \, \Omega\rangle\big]} \ .
$$
with $\Omega\in\reali^{n_2}$.

The transformed Hamiltonian is computed as
$$
H^{(\rmIV;r-1)} = \exp\left(L_{\chi^{(r)}_{3}}\right)
H^{(\rmIII;r-1)} 
$$ 
and is given in the form~\eqref{Ham-IIstage-r-step}, 
replacing the upper index II by IV, with
\begin{equation} \label{fzn-ricorsive-IVstage-r-step}
\begin{aligned}
f_{3}^{(\rmIV;r-1,r)} &= f_{3}^{(\rmIII;r-1,r)}\Bigr|_{\hat{p}=0} \ , \\
f_{4}^{(\rmIV;r-1,2r)} &= f_{4}^{(\rmIII;r-1,2r)} + L_{\chi^{(r)}_{3}} f_{3}^{(\rmIII;r-1,r)}+
                       \frac{1}{2} L_{\chi^{(r)}_{3}}^2 f_{2}^{(\rmIII;r-1,0)}= \\
                    &= f_{4}^{(\rmIII;r-1,2r)} 
                      + \frac{1}{2}L_{\chi^{(r)}_{3}} f_{3}^{(\rmIII;r-1,r)}
                      +\frac{1}{2}L_{\chi^{(r)}_{3}} f_{3}^{(\rmIII;r-1,2)}\Bigr|_{\hat{p}=0} \ , \\
f_{\ell}^{(\rmIV;r-1,s)} &= \sum_{j=0}^{\lfloor s/r \rfloor} \frac{1}{j!}
                       L_{\chi^{(r)}_{3}}^{j} f^{(\rmIII;r-1,s-jr)}_{\ell-j},
    \qquad\qquad\hbox{for }{\vtop{\hbox{$\ell=3,\ s\neq r$ ,}
    \vskip-2pt\hbox{\hskip-1pt or $\ell=4,\ s\neq 2r$ ,}
    \vskip-2pt\hbox{\hskip-1pt or $\ell\neq3,4,\ s\geq0$ .}}}
\end{aligned}
\end{equation}
where we have exploited~\eqref{omologica-IVstage-r-step}.

\subsubsection{Fifth stage of the r-th normalization step}

We average the term $f_{4}^{(\rmIV;r-1,r)}\Bigr|_{\xi=\eta=0}$
with respect to the fast angle $q_1$.
We determine the generating function $\chi^{(r)}_{4}$, belonging 
to $\Pset_4$ and of order $\Oscr(\epsilon^r)$, by solving the homological 
equation
\begin{equation} \label{omologica-Vstage-r-step}
L_{\chi^{(r)}_{4}} {\omega}{p_1}+ 
f_{4}^{(\rmIV;r-1,r)}\Bigr|_{\xi=\eta=0} = \langle
f_{4}^{(\rmIV;r-1,r)}\Bigr|_{\xi=\eta=0}\rangle_{q_1} \ .
\end{equation}
By considering the Taylor-Fourier expansion
$$
f_{4}^{(\rmIV;r-1,r)}(\hat{p},\hat{q})
=\sum_{|l|=2\atop k}\, c_{l,0,0,k}^{(\rmIV;r-1,r)}\hat{p}^{l}\exp(\imunit
\langle k, \, \hat{q} \rangle) \ ,
$$
we obtain
$$
 \chi_{4}^{(r)}(\hat{p},\hat{q}) =\sum_{|l|=2\atop k_1\neq0}\,
\frac{c_{l,0,0,k}^{(\rmIV;r-1,r)}\hat{p}^{l}\exp(\imunit
  \langle k, \, \hat{q} \rangle)}{\imunit k_1 \omega} \ .
$$
The transformed Hamiltonian is calculated as
\begin{equation*}
H^{(r)} = \exp\left(L_{\chi_{4}^{(r)}}\right)H^{(\rmIV;r-1)}
\end{equation*}
and is given in the form~\eqref{frm:H(r-1)}, replacing the upper index $r-1$ by $r$,
with
\begin{equation} \label{fzn-ricorsive-Vstage-r-step}
\begin{aligned}
f_{4}^{(r,r)} &= \langle f_{4}^{(\rmIV;r-1,r)}\Bigr|_{\xi=\eta=0}\rangle_{q_1} 
                   + \left(f_{4}^{(\rmIV;r-1,r)}-f_{4}^{(\rmIV;r-1,r)}\Bigr|_{\xi=\eta=0}\right) \ , \\
f_{\ell}^{(r,s)} &= \sum_{j=0}^{\lfloor s/r \rfloor} \frac{1}{j!} L_{\chi^{(r)}_{4}}^{j}
                  f^{(\rmIV;r-1,s-jr)}_{\ell -2j} \ .
    \qquad\qquad\hbox{for }{\vtop{\hbox{$\ell=4,\ s\neq r$ ,}
    \vskip-2pt\hbox{\hskip-1pt or $\ell\neq4,\ s\geq0$ .}}}
\end{aligned}
\end{equation}

Before closing this section we think it is worth to stress the
connection between Theorem~\ref{teo:forma-normale-r} and the normal
form structure.  In order to simply state a theorem about the
continuation of the periodic orbits, three stages of the normalization
step would be enough, the last one consisting in the average of the
term ${f}_{1,0}^{(\rmII;r-1,r)}(\hat{p},\hat{q})$ only.  With this
minimal normal form construction, the abstract result would require
assumptions on the eigenvalues of the whole matrix $N(\epsilon)$, not only
on the block ${N_{11}}(\epsilon)$; indeed, three stages would not be
enough to split the spectrum of the matrix $N(\epsilon)$ in the spectrum
of two diagonal blocks.
As a consequence, with the purpose of getting a more accessible
criterion for applications, it is necessary to perform at least a
fourth stage in the normalization step, which allows to remove the
term
$f_{1,1}^{(\rmIII;r-1,r)}$,
thus achieving the desired structure with null block on the
anti-diagonal.  Let us stress that the fourth stage does not need a
second Melnikov condition.  However, with four steps only, the matrix
of the linearized system is not independent of time, due to lack of
averaging of the terms ${f}_{0,2}^{(\rmII;r-1,r)}(\xi,\eta)$ and
${f}_{2,0}^{(\rmII;r-1,r)}(\hat{p},\hat{q})$, namely part of the
third stage and the fifth stage.  Therefore, we cannot easily deduce
the structure of the matrix $M(\epsilon)$, simply considering the
linearization of the relative equilibrium for $Z^{(r)}$.
Nevertheless, this allows to simplify the statement, giving a
criterion on the eigenvalues of the block ${N_{11}}(\epsilon)$ which can
represent an easier condition to be checked in applications.

%%%%%%%%%%%%%%%%%%%%%%%%%%%%%%%%%%%%%%%%%%%%%%%%%%%%%%%%%%%%%%%%%%%%%%%%%

\section{Analytic estimates}
\label{sec:analytics}

Our aim now is to turn the formal algorithm into a recursive scheme of estimates.
Prior to describing the main results, we must anticipate some useful
technical tools.

\subsection{Estimates for Poisson brackets and Lie series}\label{sbs:stima-serie-di-Lie}

We report here some basic Cauchy's estimates which will be needed to
bound the transformed Hamiltonian.  Since similar estimates have
already been presented, see, e.g.~\cite{PenSD18}, we decide to not
dwell on it and only include the statement of the Lemmas.

\begin{lemma}\label{lem:stima-derivata-Lie}
Let $d\in\reali$ such that $0<d<1$ and $g\in\Pset_{\ell}$ be an analytic function with bounded norm
$\|g\|_{1}$.  Then one has
\begin{equation*}
\left\|\parder{g}{\hat{p}_j}\right\|_{1-d}\leq
\frac{\left\|g\right\|_{1}}{d\rho}\ ,
\qquad
\left\|\parder{g}{\hat{q}_j}\right\|_{1-d}\leq
\frac{\left\|g\right\|_{1}}{e d \sigma}\ ,
\qquad
\left\|\parder{g}{\xi_j}\right\|_{1-d}\leq
\frac{\left\|g\right\|_{1}}{d R}\ ,
\qquad
\left\|\parder{g}{\eta_j}\right\|_{1-d}\leq
\frac{\left\|g\right\|_{1}}{d R}\ ,
\end{equation*}
\end{lemma}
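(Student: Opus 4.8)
The plan is to establish all four inequalities simultaneously by a term-by-term comparison in the weighted $\ell^1$ norm, since $\norm{\cdot}_\alpha$ is by definition the sum of the moduli of the Taylor--Fourier coefficients of its argument, each multiplied by the weight $(\alpha\rho)^{|i|}(\alpha R)^{|m_1|+|m_2|}e^{|k|\alpha\sigma}$. First I would write
\[
g=\sum_{i,m_1,m_2,k} g_{i,m_1,m_2,k}\,\hat p^{\,i}\,e^{\imunit\prsca{k}{\hat q}}\,\xi^{m_1}\eta^{m_2}\ ,
\]
and differentiate monomial by monomial: differentiation in $\hat p_j$, $\hat q_j$, $\xi_j$, $\eta_j$ pulls out the factors $i_j$, $\imunit k_j$, $(m_1)_j$, $(m_2)_j$ respectively and lowers the corresponding exponent by one. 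It then suffices to show that, for each fixed multi-index, the weight of the differentiated monomial computed at the contracted radius $(1-d)$ is bounded by the claimed constant times its weight at radius $1$; summing over all indices reproduces $\norm{g}_1$ on the right-hand side and yields the four bounds at once.

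Next I would carry out the reduction for a single term. For the $\hat p_j$-derivative, cancelling the common weight leaves the scalar inequality
\[
i_j\,d\,(1-d)^{|i|-1}(1-d)^{|m_1|+|m_2|}e^{-|k|d\sigma}\le 1\ ;
\]
discarding the spectator factors $(1-d)^{|m_1|+|m_2|}\le 1$ and $e^{-|k|d\sigma}\le 1$ and using $i_j\le|i|$, this reduces to $n\,d\,(1-d)^{n-1}\le 1$ for all integers $n\ge1$ and $0<d<1$. The $\xi_j$- and $\eta_j$-derivatives lead to the identical inequality with $|m_1|+|m_2|$ in place of $|i|$. For the $\hat q_j$-derivative the same cancellation, together with $|k_j|\le|k|$, leaves
\[
|k|\,\sigma\,d\,e^{-|k|d\sigma}\le \frac1e\ ,
\]
which is precisely what produces the extra factor $e$ in the denominator of the second estimate.

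It remains to verify the two one-variable inequalities, and this is really the only non-bookkeeping step. For the polynomial bound, $d\mapsto n\,d\,(1-d)^{n-1}$ vanishes at the endpoints of $(0,1)$ and attains its maximum at $d=1/n$, where it equals $(1-1/n)^{n-1}\le1$; hence $n\,d\,(1-d)^{n-1}\le1$ on all of $(0,1)$. For the exponential bound, the substitution $x=|k|\sigma d$ turns the left-hand side into $x\,e^{-x}$, whose maximum over $x\ge0$ is $1/e$ (attained at $x=1$). Both estimates hold uniformly in the indices, so the term-by-term comparison closes the argument.

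I do not expect any serious obstacle here: the proof is entirely elementary and the substance is confined to the two scalar optimizations. The one point worth flagging is the sharp constant $1/e=\max_{x\ge0}x\,e^{-x}$, which is exactly the reason the angle-derivative estimate carries the factor $e$ absent from the other three; this asymmetry is genuine and not an artefact of the bookkeeping.
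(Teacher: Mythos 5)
Your proof is correct: the term-by-term comparison in the weighted norm, combined with the two scalar optimizations $n\,d\,(1-d)^{n-1}\le 1$ (maximized at $d=1/n$) and $x\,e^{-x}\le 1/e$, is exactly the standard Cauchy-type argument for this class of norms, and your bookkeeping (including the observation that the factor $1/e$ arises only for the angle derivatives, where the weight is exponential rather than polynomial in the index) is accurate. The paper itself does not prove this lemma---it defers to the analogous estimates in~\cite{PenSD18}---and the proof given there is essentially the one you wrote, so there is nothing to reconcile beyond the trivial edge case $n=1$, where your phrase ``vanishes at the endpoints'' is slightly off (the function $d\mapsto d$ attains $1$ at $d=1$) but the stated bound still holds.
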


\begin{lemma}\label{lem:stima-derivata-gen}
Let $d\in\reali$ such that $0<d<1$ and $j\geq1$.  Then one has
\begin{align}
\left\|\lie{\chi_0^{(r)}}^{j}f\right\|_{1-d-d^{\prime}}
&\leq\frac{j!}{e}
\left(
\frac{e\|X_0^{(r)}\|_{1-d'}}{{d^2\rho\sigma}} + \frac{e |\zeta^{(r)}|}{{d\rho}}
  \right)^{j}
  \|f\|_{1-d^{\prime}}\ ,
  \label{frm:stima-liechi0}
\\
\left\|\lie{\chi_1^{(r)}}^{j}f\right\|_{1-d-d^{\prime}}
&\leq\frac{j!}{e^2}
\left(
\frac{\|\chi_1^{(r)}\|_{1-d'}}{{d^2}}\left(\frac{e}{\rho\sigma}+\frac{e^2}{R^2}\right)
  \right)^{j}
  \|f\|_{1-d^{\prime}}\ ,
    \label{frm:stima-liechi1}
\\
\left\|\lie{\chi_2^{(r)}}^{j}f\right\|_{1-d-d^{\prime}}
&\leq\frac{j!}{e^2}
\left(
\frac{\|\chi_2^{(r)}\|_{1-d'}}{{d^2}}\left(\frac{2e}{\rho\sigma}+\frac{e^2}{R^2}\right)
  \right)^{j}
  \|f\|_{1-d^{\prime}}\ ,
    \label{frm:stima-liechi2}
\\
\left\|\lie{\chi_3^{(r)}}^{j}f\right\|_{1-d-d^{\prime}}
&\leq\frac{j!}{e^2}
\left(
\frac{\|\chi_3^{(r)}\|_{1-d'}}{{d^2}}\left(\frac{2e}{\rho\sigma}+\frac{e^2}{R^2}\right)
  \right)^{j}
  \|f\|_{1-d^{\prime}}\ ,
    \label{frm:stima-liechi3}
\\
\left\|\lie{\chi_4^{(r)}}^{j}f\right\|_{1-d-d^{\prime}}
&\leq\frac{j!}{e^2}
\left(
\frac{2e\|\chi_4^{(r)}\|_{1-d'}}{{d^2\rho\sigma}}
  \right)^{j}
  \|f\|_{1-d^{\prime}}\ ,
    \label{frm:stima-liechi4}
\end{align}
\end{lemma}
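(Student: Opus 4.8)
The plan is to reduce all five estimates \eqref{frm:stima-liechi0}--\eqref{frm:stima-liechi4} to a single scheme: bound one Lie derivative $\lie{\chi_k^{(r)}}f=\{f,\chi_k^{(r)}\}$ on a slightly contracted domain using the Cauchy estimates of Lemma~\ref{lem:stima-derivata-Lie} together with the submultiplicativity of the weighted Fourier norm $\norm{\cdot}$, and then iterate $j$ times. The only case-dependent input is the Poisson-bracket structure, which is dictated by the class $\Pset_\ell$ to which each $\chi_k^{(r)}$ belongs and by the variables on which it depends. I would first record these explicitly. Since $\chi_0^{(r)}=X_0^{(r)}(\hat q)+\prsca{\zeta^{(r)}}{\hat q}$ depends only on $\hat q$, the bracket collapses to $\{f,\chi_0^{(r)}\}=-\sum_j \partial_{\hat p_j}f\,\partial_{\hat q_j}\chi_0^{(r)}$, so that a single $\hat p$-derivative of $f$ is involved: the $X_0^{(r)}$ contribution additionally needs a $\hat q$-derivative (two derivatives, weight $1/(d^2\rho\sigma)$), whereas the linear part produces the constant $\zeta^{(r)}$ with no further derivative (single power $1/(d\rho)$), which is precisely why the two terms in \eqref{frm:stima-liechi0} carry $d^2$ and $d$. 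For $\chi_1^{(r)},\chi_2^{(r)},\chi_3^{(r)}$ the bracket splits into an action--angle part and a transverse $(\xi,\eta)$ part, giving the combination $e/(\rho\sigma)+e^2/R^2$; the extra factor $2$ for $k=2,3$ reflects that $\chi_2^{(r)},\chi_3^{(r)}$ depend on $\hat p$ as well, so both terms $\partial_{\hat q}f\,\partial_{\hat p}\chi-\partial_{\hat p}f\,\partial_{\hat q}\chi$ survive. Finally $\chi_4^{(r)}$ depends only on $(\hat p,\hat q)$, leaving just the action--angle weight $2e/(\rho\sigma)$.

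The single-step estimate is then routine. Contracting the domain by an increment $\delta$ and applying Lemma~\ref{lem:stima-derivata-Lie} to each factor, submultiplicativity yields a bound of the form $\norm{\lie{\chi_k^{(r)}}g}_{1-\delta}\le (A_k/\delta^2)\,\norm{g}_{1}$ (and $\le (A_0'/\delta)\,\norm{g}_{1}$ for the $\zeta^{(r)}$ part of the case $k=0$), where $A_k$ collects $\norm{\chi_k^{(r)}}$ and the appropriate weights. The key bookkeeping point is that every $\hat q$-derivative carries the extra factor $1/e$ of Lemma~\ref{lem:stima-derivata-Lie}: this is exactly what distinguishes the weight $e/(\rho\sigma)$, whose action--angle pair contains one $\hat q$-derivative, from $e^2/R^2$, whose transverse pair contains none.

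To obtain the iterated bounds I would write $\lie{\chi_k^{(r)}}^{j}f$ as a composition of $j$ single steps over a sequence of nested subdomains interpolating from $1-d'$ down to $1-d-d'$. In each bracket one derivative falls on the \emph{evolving} function and needs only the incremental reduction $d/j$, while the other falls on the \emph{fixed} generating function $\chi_k^{(r)}$ and, since the latter lives on the full domain, sees the cumulative reduction. The product of the resulting per-step factors is then of the form $j^{2j}/(d^{2j}\,j!)\,A_k^{\,j}$, and the elementary Stirling-type inequality $j^{j}\le e^{j}j!$ turns $j^{2j}/j!$ into $j!$ times a geometric factor. Collecting terms reproduces the stated shape $\tfrac{j!}{e^2}(A_k/d^2)^{j}$ for $k=1,\dots,4$; for the $\zeta^{(r)}$ part of $\chi_0^{(r)}$ only one derivative reduction occurs per step, so the product is $j^{j}/d^{j}$ and the prefactor degrades to $\tfrac{j!}{e}$, which is why \eqref{frm:stima-liechi0} carries $1/e$ rather than $1/e^2$.

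The main obstacle is not any single step but the exact tracking of the constants through the iteration. One must coordinate the domain-splitting scheme with the Stirling conversion so that the combinatorial factor collapses to a \emph{single} $j!$ rather than the $(j!)^2$ that a naive equal-step splitting would produce, and simultaneously account for the powers of $e$ arising both from the $\hat q$-derivative Cauchy estimates and from $j^{j}\le e^{j}j!$, so as to land precisely on the prefactors $1/e$, $1/e^2$ and on the per-step weights $e/(\rho\sigma)$, $e^2/R^2$ displayed in \eqref{frm:stima-liechi0}--\eqref{frm:stima-liechi4}. Since estimates of this type are carried out in full detail in, e.g.,~\cite{GioLoc-1997,PenSD18}, I would present the bracket structures and the single-step inequalities and relegate the remaining arithmetic to those references.
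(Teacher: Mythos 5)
Your proposal is correct and is essentially the argument the paper itself relies on: the paper omits the proof of this lemma entirely, stating only that ``similar estimates have already been presented'' and deferring to~\cite{PenSD18} (see also~\cite{GioLoc-1997}), where exactly your scheme---single-step Cauchy estimates with incremental domain reduction on the evolving function and cumulative reduction on the fixed generating function, giving the factor $j^{2j}/(j!\,d^{2j})$, followed by the Stirling-type conversion $j^{j}\le e^{j-1}j!$ to collapse the combinatorics to a single $j!$---is carried out. Your accounting of the bracket structures also reproduces the stated constants faithfully: the $1/d$ versus $1/d^{2}$ split and the weaker prefactor $j!/e$ for $\chi_0^{(r)}$ coming from the constant $\zeta^{(r)}$ part, the factor $2$ on $e/(\rho\sigma)$ precisely for the generating functions depending on $\hat p$ ($\chi_2^{(r)},\chi_3^{(r)},\chi_4^{(r)}$), and the transverse weight $e^{2}/R^{2}$.
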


\subsection{Recursive scheme of estimates}

Having fixed $d\in\reali$, $0 < d \leq 1/4$, we consider a
sequence ${\delta}_{r\geq1}$ of positive real numbers satisfying
\begin{equation}
  \delta_{r+1} \leq \delta_r\ ,\quad \sum_{r\geq1} \delta_r \leq \frac{d}{5}\ ,
  \label{frm:delta_r}
\end{equation}
and a further sequence ${d}_{r\geq0}$ defined as
\begin{equation}
  d_0 = 0\ ,\quad d_r = d_{r-1} + 5\delta_r\ .
  \label{frm:d_r}
\end{equation}
This sequence allows to control the restrictions of the domain due to
the Cauchy's estimate.

The factors entered by the estimate of the norm of the Poisson brackets
are bounded by
\begin{equation}
  \label{frm:Xi-r}
  \Xi_r=\max\left(\frac{eE}{\alpha\delta_r^2\rho\sigma}+  \frac{eE}{4m\delta_r\rho^2},
  2+\frac{eE}{\alpha\delta_r^2\rho\sigma},
  \frac{E}{\alpha\delta_r^2}\left( \frac{2e}{\rho\sigma} +\frac{e^2}{R^2} \right)
  \right)
  \ ,
\end{equation}
with
$$
\alpha = \min_{k_1, j, l, k} \left(|\omega|, |k_1\omega \pm \Omega_j |\,, | k_1\omega \pm \Omega_l \pm \Omega_k| \right)\ ,
$$
that is strictly greater than zero in view of the the Melnikov conditions.

The number of terms in~\eqref{fzn-ricorsive-Istage-r-step},
\eqref{fzn-ricorsive-IIstage-r-step},
\eqref{fzn-ricorsive-IIIstage-r-step},
\eqref{fzn-ricorsive-IVstage-r-step} and~\eqref{fzn-ricorsive-Vstage-r-step} is controlled by the five
sequences
\begin{equation} \label{frm:nu-sequence}
\vcenter{\openup1\jot \halign{
    $\displaystyle\hfil#$&$\displaystyle{}#\hfil$&$\displaystyle#\hfil$\cr
    \nu_{0,s} &= 1 &\quad\hbox{for } s\geq 0\,, \cr
    \nu_{r,s}^{(\rmI)} &= \sum_{j=0}^{\lfloor s/r \rfloor}
    \nu_{r-1,r}^{j}\nu_{r-1,s-jr} &\quad\hbox{for } r\geq 1\,,\ s\geq
    0\,, \cr
    \nu_{r,s}^{(\rmII)} &= \sum_{j=0}^{\lfloor s/r \rfloor}
    (\nu_{r,r}^{(\rmI)})^{j}\nu_{r,s-jr}^{(\rmI)} 
    &\quad\hbox{for } r\geq 1\,,\ s\geq 0\,, \cr 
    \nu_{r,s}^{(\rmIII)} &= \sum_{j=0}^{\lfloor s/r \rfloor}
    (2\nu_{r,r}^{(\rmII)})^{j}\nu_{r,s-jr}^{(\rmII)}
    &\quad\hbox{for } r\geq 1\,,\ s\geq 0\,.  \cr 
   \nu_{r,s}^{(\rmIV)} &= \sum_{j=0}^{\lfloor s/r \rfloor}
   (\nu_{r,r}^{(\rmIII)})^{j}\nu_{r,s-jr}^{(\rmIII)}
   &\quad\hbox{for } r\geq 1\,,\ s\geq 0\,.  \cr 
   \nu_{r,s} &= \sum_{j=0}^{\lfloor s/r \rfloor}
   (\nu_{r,r}^{(\rmIV)})^{j}\nu_{r,s-jr}^{(\rmIV)}
   &\quad\hbox{for } r\geq 1\,,\ s\geq 0\,.  \cr }}
\end{equation}

\noindent
Again, since similar estimates has been already presented, see,
e.g.~\cite{PenSD18}, we only include the statement of the following Lemma.
 
\begin{lemma}\label{lem:nu}
The sequence of positive integers $\{\nu_{r,s}\}_{r\ge 0\,,\,s\ge 0}$
defined in~\eqref{frm:nu-sequence} is bounded by
$$
\nu_{r,s}\leq\nu_{s,s}\leq \frac{2^{14s}}{2^8}\ .
$$
\end{lemma}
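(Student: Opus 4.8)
The plan is to collapse the two-sided chain onto a single estimate for the diagonal $\nu_{s,s}$, and to obtain that estimate by converting the five stagewise recursions of~\eqref{frm:nu-sequence} into an operation on generating functions. First I would prove the left inequality $\nu_{r,s}\le\nu_{s,s}$ from two elementary observations. Each of the recursions defining $\nu^{(\rmII)}_{r,s},\nu^{(\rmIII)}_{r,s},\nu^{(\rmIV)}_{r,s},\nu_{r,s}$ reproduces its input as the $j=0$ term and adds only non-negative integers, while the $j=0$ term of $\nu^{(\rmI)}_{r,s}$ is exactly $\nu_{r-1,s}$; hence $\nu_{r,s}\ge\nu^{(\rmIV)}_{r,s}\ge\nu^{(\rmIII)}_{r,s}\ge\nu^{(\rmII)}_{r,s}\ge\nu^{(\rmI)}_{r,s}\ge\nu_{r-1,s}$, so $\nu_{r,s}$ is non-decreasing in $r$. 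On the other hand, for $r>s$ one has $\lfloor s/r\rfloor=0$, so every stagewise sum collapses to its single $j=0$ term and $\nu_{r,s}=\nu_{r-1,s}$; iterating gives $\nu_{r,s}=\nu_{s,s}$ for all $r\ge s$. Combining monotonicity with this stabilization yields $\nu_{r,s}\le\nu_{s,s}$ for every $r$, reducing the lemma to the diagonal bound $\nu_{s,s}\le 2^{14s}/2^8$ (for $s\ge1$; the case $s=0$ is the trivial $\nu_{r,0}=1$).

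For the diagonal bound I would pass to the generating functions $\Phi_r(x)=\sum_{s\ge0}\nu_{r,s}x^s$. Since the summand vanishes for $jr>s$, the floor in each recursion may be dropped and $\sum_{j\ge0}(c\,\nu^{(\cdot)}_{r,r})^jx^{jr}=(1-c\,\nu^{(\cdot)}_{r,r}x^r)^{-1}$; thus every stage multiplies the generating function by one factor $(1-c\,\nu^{(\cdot)}_{r,r}x^r)^{-1}$, and a full level reads $\Phi_r=\Phi_{r-1}\prod_{i=1}^{5}(1-c_i\nu^{(\cdot)}_{r,r}x^r)^{-1}$. Using $\lfloor r/r\rfloor=1$ and $\nu_{\cdot,0}=1$ one checks the diagonal identities $\nu^{(\rmI)}_{r,r}=2\nu_{r-1,r}$, $\nu^{(\rmII)}_{r,r}=4\nu_{r-1,r}$, $\nu^{(\rmIII)}_{r,r}=12\nu_{r-1,r}$, $\nu^{(\rmIV)}_{r,r}=24\nu_{r-1,r}$, so the five factor weights $c_i\nu^{(\cdot)}_{r,r}$ equal $1,2,8,12,24$ times $\nu_{r-1,r}$, with total $47\,\nu_{r-1,r}<2^6\,\nu_{r-1,r}$. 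Extracting $[x^r]$ gives the clean recursion $\nu_{r,r}=48\,\nu_{r-1,r}$, and more generally $\nu_{r,s}=\nu_{r-1,s}+\sum_{m\ge1}\big([x^{rm}]\prod_i(1-c_i\nu_{r-1,r}x^r)^{-1}\big)\nu_{r-1,s-rm}$, where the extracted coefficient is bounded by $(47\nu_{r-1,r})^m$. I would then close the estimate by strong induction on $s$, the factor $2^{14}$ being chosen well above the effective growth rate and the spare factor $2^8$ absorbing the five-stage total $47$ together with the convolution multiplicities.

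The hard part will be the feedback of the self-weights $\nu_{r-1,r}$, which enter the denominators and make the recursion nonlinear. A naive uniform hypothesis $\nu_{r,s}\le 2^{14s}/2^8$ is too weak: inserted into $\sum_{m\ge1}(47\nu_{r-1,r})^m\nu_{r-1,s-rm}$ it bounds each level-$r$ correction by $O(2^{14s}/2^8)$, and summing over $r=1,\dots,s-1$ produces a spurious factor $\sim s$ that breaks the induction. The remedy, and the technical core, is to carry a sharper, $r$-dependent estimate reflecting the faster-than-geometric decay of $\nu_{r,s}$ once the level $r$ drops below the order $s$, so that the corrections become summable in $r$ to a constant; the comfortable gap between the true diagonal base (numerically $\nu_{2,2}/\nu_{1,1}=1547\approx 2^{11}$) and the claimed $2^{14}$, reinforced by the $2^8$ cushion, is precisely what leaves room for such a bound. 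With that sharper hypothesis in hand, the induction step amounts to verifying that the diagonal grows by strictly less than the factor $2^{14}$ per unit of $s$, which the tuned constants were designed to satisfy.
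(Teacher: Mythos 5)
Your proof of the first inequality is correct and complete: each of the five stage recursions contains its input as the $j=0$ term with all remaining terms non-negative, giving $\nu_{r-1,s}\le\nu^{(\rmI)}_{r,s}\le\nu^{(\rmII)}_{r,s}\le\nu^{(\rmIII)}_{r,s}\le\nu^{(\rmIV)}_{r,s}\le\nu_{r,s}$, while $\lfloor s/r\rfloor=0$ for $r>s$ collapses every stage and yields $\nu_{r,s}=\nu_{s,s}$ for $r\ge s$; monotonicity plus stabilization give $\nu_{r,s}\le\nu_{s,s}$. Your structural work on the diagonal is also correct: from $\nu_{r,0}=1$ one gets $\nu^{(\rmI)}_{r,r}=2\nu_{r-1,r}$, $\nu^{(\rmII)}_{r,r}=4\nu_{r-1,r}$, $\nu^{(\rmIII)}_{r,r}=12\nu_{r-1,r}$, $\nu^{(\rmIV)}_{r,r}=24\nu_{r-1,r}$, $\nu_{r,r}=48\nu_{r-1,r}$, and the level-$r$ generating-function factor $\prod_i(1-w_i\nu_{r-1,r}x^r)^{-1}$ with weights $w=(1,2,8,12,24)$, $\sum_i w_i=47$. (For context: the paper itself omits the proof, deferring to the analogous lemma in PenSD18, so there is no in-paper argument to compare against.)

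The gap is that the second inequality, $\nu_{s,s}\le 2^{14s}/2^8$, which is the entire substance of the lemma, is never proved. You correctly diagnose why the uniform induction fails, but you then delegate the repair to a ``sharper, $r$-dependent estimate'' that is never written down, never checked on base cases, and never shown to close with the specific constants $2^{14}$ and $2^8$. This is not a detail one can wave at, for two reasons. First, the obstruction you found is structural: under \emph{any} purely geometric hypothesis $\nu_{r-1,r}\le A\mu^r$ (whatever $A$ and $\mu$), the generic correction $h_m(w)\,\nu_{r-1,r}^m\,\nu_{r-1,s-rm}$ is bounded by $48\,h_m(w)A^{m+1}\mu^s$, a quantity \emph{independent of $r$}, so the spurious factor $s$ reappears for every choice of constants; a workable hypothesis must carry sub-geometric corrections (e.g.\ of the type $A\mu^r/r^2$, so that the sum over $r$ of $1/(r^2(s-rm)^2)$-type terms becomes $\Oscr(1/s^2)$), together with separate treatment of the edge terms $r=1$, $s-rm\in\{0,1\}$ and $rm=s$, and a final tuning of $A,\mu$ against $2^{14}$ and $2^8$. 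Second, the claim that the ``comfortable gap'' between $\nu_{2,2}/\nu_{1,1}=1547$ and $2^{14}$ leaves room is an assertion, not an argument: the successive diagonal ratios $\nu_{s,s}/\nu_{s-1,s-1}$ produced by the recursion are $48$, $1547$, $\approx 2284$, $\approx 3304$, i.e.\ still increasing at the order where your numerics stop, and the statement that this growth saturates strictly below $2^{14}$ is precisely what the lemma asserts and what must be proved. As it stands, your proposal establishes the easy inequality and reduces the hard one to an unproved claim.
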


The following Lemma collects all the key estimates concerning the
generating functions and the transformed Hamiltonians.  A detailed
proof of the Lemma will take several pages of straightforward (and
tedious) calculation.  Since the key aspects have already been
presented in~\cite{PenSD18} we omit the proof and leave the adaptation to the
willing reader.

\begin{lemma} \label{lem:lemmone.1}
  Consider a Hamiltonian $H^{(r-1)}$ expanded as in~\eqref{frm:H(0)}.
  Let $\chi_0^{(r)}$, $\chi_1^{(r)}$, $\chi_2^{(r)}$, $\chi_3^{(r)}$ 
  and $\chi_4^{(r)}$ be the generating functions
  used to put the Hamiltonian in normal form at order~$r$, then one has
  \begin{equation*}
  \begin{aligned}
    \|X_0^{(r)}\|_{1-d_{r-1}} &\leq \frac{1}{\alpha} \nu_{r-1,r} \Xi_r^{5r-5} E\epsilon^r\ ,\cr
    |\zeta^{(r)}| &\leq  \frac{1}{4m\rho} \nu_{r-1,r} \Xi^{5r-3}E\epsilon^r\ ,\cr
    \|\chi_1^{(r)}\|_{1-d_{r-1}-\delta_r} &\leq  \frac{1}{\alpha}
    \nu_{r,r}^{(\rmI)} \Xi_r^{5r-4}  \frac{E}{2}\epsilon^r  \ ,\cr
    \|\chi_2^{(r)}\|_{1-d_{r-1}-2\delta_r} &\leq \frac{1}{\alpha}
    2\nu_{r,r}^{(\rmII)}   \Xi_r^{5r-3}  \frac{E}{2^2}\epsilon^r\ ,\cr
    \|\chi_3^{(r)}\|_{1-d_{r-1}-3\delta_r} &\leq  \frac{1}{\alpha}
    \nu_{r,r}^{(\rmIII)} \Xi_r^{5r-2}  \frac{E}{2^3}\epsilon^r  \ ,\cr
    \|\chi_4^{(r)}\|_{1-d_{r-1}-4\delta_r} &\leq  \frac{1}{\alpha}
    \nu_{r,r}^{(\rmIV)} \Xi_r^{5r-1}  \frac{E}{2^4}\epsilon^r  \ .
  \end{aligned}
  \end{equation*}
  The terms appearing in the expansion of $H^{(r)}$, i.e.
  in~\eqref{frm:H(r)}, are bounded as
  \begin{equation}\label{frm:f_l^(r,s)}
  \begin{aligned}
  \|f_{\ell}^{(r,s)}\|_{1-d_r} &\leq \nu_{r,s} 
  \Xi_r^{5s} \frac{E}{2^\ell}\epsilon^s\ .
  \end{aligned}
  \end{equation}
\end{lemma}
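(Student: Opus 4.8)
The plan is to establish all the stated bounds simultaneously by induction on the normalization order $r$, mirroring the five-stage structure of Section~\ref{sec:nfalg} and adapting the template of~\cite{PenSD18}. The base case is supplied directly by hypothesis~\textbf{H2}, i.e. $\|f_\ell^{(0,s)}\|_1 \leq E\,2^{-\ell}\epsilon^s$, which is~\eqref{frm:f_l^(r,s)} at order $0$ with $\nu_{0,s}=1$ and $d_0=0$. For the inductive step I assume~\eqref{frm:f_l^(r,s)} holds at order $r-1$ and derive first the six generating-function estimates and then the order-$r$ bound.

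First I would estimate the generating functions. Each of $X_0^{(r)}$, $\chi_1^{(r)}$, $\chi_2^{(r)}$, $\chi_3^{(r)}$ and $\chi_4^{(r)}$ solves a homological equation (the pair defining $\chi_0^{(r)}$ together with~\eqref{omologica-IIstage-r-step}, \eqref{omologica-IIIstage-r-step}, \eqref{omologica-IVstage-r-step} and~\eqref{omologica-Vstage-r-step}), which amounts to dividing each Taylor--Fourier coefficient by a divisor of the form $k_1\omega$ or $k_1\omega\pm\langle m_1-m_2,\Omega\rangle$. By the Melnikov conditions~\textbf{H3} every such divisor is bounded below in modulus by $\alpha$, so the weighted Fourier norm of each generating function is controlled by $\alpha^{-1}$ times the norm of the term being averaged or removed; inserting the inductive bound on that term yields the stated estimates. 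The momentum shift $\zeta^{(r)}$ is the exceptional case: it solves the linear system~\eqref{frm:traslazioneR} with matrix $C_0$, so here I invoke the twist condition~\textbf{H1} to get $|\zeta^{(r)}|\leq(4m\rho)^{-1}(\cdots)$, the right-hand side being bounded through the Cauchy estimate $\|\partial_{\hat p}(\cdot)\|_{1-d}\leq\|\cdot\|_1/(d\rho)$ of Lemma~\ref{lem:stima-derivata-Lie}.

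Next I would propagate the norms through the five Lie transformations. Each stage expresses the new terms as finite Lie-series sums of the previous ones, as in~\eqref{fzn-ricorsive-Istage-r-step}--\eqref{fzn-ricorsive-Vstage-r-step}. Applying the iterated Lie-derivative estimates of Lemma~\ref{lem:stima-derivata-gen}, each factor $L_{\chi_i^{(r)}}$ contributes a multiplicative constant that, after inserting the generating-function bounds just obtained and accepting the analyticity loss $\delta_r$ in the domain, is dominated by the quantity $\Xi_r$ of~\eqref{frm:Xi-r}. The number of summands produced at each stage is counted exactly by the sequences $\nu_{r,s}^{(\rmI)},\ldots,\nu_{r,s}$ of~\eqref{frm:nu-sequence}, so collecting the five stages raises the exponent of $\Xi_r$ to $5s$ and the combinatorial factor to $\nu_{r,s}$, giving~\eqref{frm:f_l^(r,s)}. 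The total analyticity loss over the five stages is $5\delta_r$, matching $d_r=d_{r-1}+5\delta_r$ in~\eqref{frm:d_r}, and by~\eqref{frm:delta_r} the restricted domain never shrinks below $1-d\geq 3/4$.

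The hard part is the precise bookkeeping of the powers of $\Xi_r$: one must verify that $\chi_i^{(r)}$ carries $\Xi_r^{5(r-1)+i}$ for $i=0,\ldots,4$, with $\zeta^{(r)}$ the sole exception (carrying $\Xi_r^{5r-3}$, two powers higher than the pattern, because the Cauchy derivative on the right-hand side of~\eqref{frm:traslazioneR} already consumes factors). The obstacle is thus not a new analytic estimate but a consistency check on a system of index inequalities: the exponents accumulated by iterating $L_{\chi_0^{(r)}},\ldots,L_{\chi_4^{(r)}}$ in the recursive formulas, added to the exponent carried into each stage by the inductive hypothesis, must close to $5s$ across all five stages. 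Finally, the uniform bound $\nu_{r,s}\leq 2^{14s}/2^8$ of Lemma~\ref{lem:nu} keeps the resulting constants summable, which is what the subsequent convergence argument requires.
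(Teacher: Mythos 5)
Your outline follows, in broad strokes, the scheme the paper intends: the paper itself does not prove this lemma (it explicitly omits the proof and defers to~\cite{PenSD18}), and the ingredients you assemble --- base case from \textbf{H2}, homological equations with divisors bounded below by $\alpha$ via \textbf{H3}, the twist condition \textbf{H1} for $\zeta^{(r)}$, Lie-series propagation through the five stages with the $\nu$-counting of~\eqref{frm:nu-sequence} and the domain losses $d_r=d_{r-1}+5\delta_r$ --- are exactly the right ones. However, there is a genuine gap in your inductive setup, and it is precisely the one the paper warns about in the remark following the lemma. You take as inductive hypothesis only~\eqref{frm:f_l^(r,s)} at order $r-1$, i.e.\ $\|f_{\ell}^{(r-1,s)}\|_{1-d_{r-1}}\leq \nu_{r-1,s}\,\Xi_{r-1}^{5s}\,E\,2^{-\ell}\epsilon^s$, and claim that inserting this bound into the homological equations yields the stated generating-function estimates. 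It does not: for $s=r$, $\ell=0$ the hypothesis gives $\|f_{0}^{(r-1,r)}\|\leq \nu_{r-1,r}\,\Xi_{r-1}^{5r}\,E\,\epsilon^r$, so dividing coefficient-by-coefficient by divisors of modulus at least $\alpha$ produces $\|X_0^{(r)}\|\leq \alpha^{-1}\nu_{r-1,r}\,\Xi_{r-1}^{5r}\,E\,\epsilon^r$, whose $\Xi$-exponent is $5r$, not the claimed $5r-5$. Since $\Xi_r\geq 2$ and the sequence $\Xi_r$ is nondecreasing (the $\delta_r$ decrease), this is weaker by at least a factor $\Xi^{5}\geq 2^5$, and the discrepancy is not cosmetic: once the generating functions carry $\Xi^{5r}$ instead of $\Xi^{5(r-1)+i}$, every Lie derivative $L_{\chi_i^{(r)}}$ appearing in the recursions~\eqref{fzn-ricorsive-Istage-r-step}--\eqref{fzn-ricorsive-Vstage-r-step} injects extra powers of $\Xi_r$, the final exponent overshoots $5s$, and the induction does not close.

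The missing idea is that the inductive statement itself must be strengthened: one has to carry along, as part of the hypothesis, sharper bounds (with smaller $\Xi$-exponents) for the low-degree terms $f_{\ell}^{(r-1,s)}$, $\ell\leq 4$, $s\geq r$ --- the ones that will feed the generating functions of the later steps --- together with separate estimates for the intermediate Hamiltonians $H^{(\rmI;r-1)},\ldots,H^{(\rmIV;r-1)}$ inside each step; the bound~\eqref{frm:f_l^(r,s)} with exponent $5s$ is only the coarsest consequence of this finer bookkeeping. This is exactly what the paper means when it stresses that the proof \emph{requires stricter estimates in~\eqref{frm:f_l^(r,s)} both for the lower order terms, as is evident from the bounds on the generating functions, and for the intermediate stages of the $r$-th normalization step}. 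You do notice the correct pattern --- $\chi_i^{(r)}$ carrying $\Xi_r^{5(r-1)+i}$, with $\zeta^{(r)}$ exceptional --- but you frame the exponent accounting as a consistency check that will succeed under your hypothesis, whereas under your hypothesis it fails; repairing it means reformulating the induction with the stronger statement, not merely verifying indices.
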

Let us stress that the proof of the Lemma actually requires stricter
estimates in~\eqref{frm:f_l^(r,s)} both for the lower order terms, as
is evident from the bounds on the generating functions, and for the
intermediate stages of the $r$-th normalization step.  The
interested reader can refer to~\cite{PenSD18}, {\sl mutatis mutandis}.

\subsection{Estimates for the approximate periodic orbit}

\begin{lemma}\label{lem:stima-approx-p.o.}
Let $x^*=(q^*,0,0)$ be a relative equilibrium for the truncated normal
form $Z^{(r)}$ defined in~\eqref{frm:Z(r)}, then $x^*$ is an approximate
periodic orbit for the Hamiltonian $H^{(r)}$ of order
$\Oscr(\epsilon^{r})$. Precisely, there exist $\epsilon^*(r)$ and $c_1(r)$ such that for $\epsilon<\epsilon^*$
\begin{equation}
  \label{frm:est.approx}
\norm{\Upsilon(x^*;\epsilon,q_{1}(0))}\leq c_1\epsilon^{r+1}\ .
\end{equation}
\end{lemma}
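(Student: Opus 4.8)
The plan is to exploit that $x^*$ is, by construction, an \emph{exact} relative equilibrium of the truncated normal form $Z^{(r)}$, and to treat the full Hamiltonian $H^{(r)}$ as a perturbation of $Z^{(r)}$ of size $\Oscr(\epsilon^{r+1})$. Writing $H^{(r)} = Z^{(r)} + R^{(r)}$, the remainder $R^{(r)}=H^{(r)}-Z^{(r)}$ collects exactly the terms $f_\ell^{(r,s)}$ with $s>r$ dropped in passing from~\eqref{frm:H(r)} to~\eqref{frm:Z(r)}, all of order $\Oscr(\epsilon^{r+1})$. Denote by $\phi^t_{Z}$ and $\phi^t_{H}$ the flows of $Z^{(r)}$ and $H^{(r)}$. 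The canonical equations written just before~\eqref{frm:qstar}, together with the relative-equilibrium condition~\eqref{frm:qstar}, show that $\phi^t_Z(x^*)$ reduces to the uniform rotation $q_1(t)=q_1(0)+\omega t$ with every other coordinate frozen at its initial value; hence at $t=T=2\pi/\omega$ the orbit closes up and $\Upsilon$, evaluated along the $Z^{(r)}$-flow, vanishes identically. It thus suffices to control the departure of the $H^{(r)}$-flow from the $Z^{(r)}$-flow over one period.

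First I would bound the remainder and its vector field. By Lemma~\ref{lem:lemmone.1} each term satisfies $\|f_\ell^{(r,s)}\|_{1-d_r}\le \nu_{r,s}\,\Xi_r^{5s}\,E\,2^{-\ell}\epsilon^s$; summing over $\ell\ge0$ and $s\ge r+1$ and inserting the bound $\nu_{r,s}\le 2^{14s}/2^8$ of Lemma~\ref{lem:nu}, the resulting geometric-type series converges whenever $2^{14}\Xi_r^5|\epsilon|<\tfrac12$ and is dominated by its leading term $s=r+1$, giving $\|R^{(r)}\|_{1-d_r}\le c\,\epsilon^{r+1}$. The Cauchy estimates of Lemma~\ref{lem:stima-derivata-Lie}, applied on a slightly smaller domain, then yield a uniform bound $\|X_{R^{(r)}}\|\le c'\,\epsilon^{r+1}$ for the Hamiltonian vector field of $R^{(r)}$ on $\Dscr_{\frac{1}{2}(\rho,\sigma,R)}$.

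Finally I would compare the two flows by a Gronwall argument. Setting $y(t)=\phi^t_H(x^*)$ and $z(t)=\phi^t_Z(x^*)$ and subtracting the vector fields, one has $\dot y-\dot z=\bigl[X_{Z^{(r)}}(y)-X_{Z^{(r)}}(z)\bigr]+X_{R^{(r)}}(y)$: the bracket is controlled by the Lipschitz constant $L$ of $X_{Z^{(r)}}$, which is $\Oscr(1)$ on the domain by analyticity, and the last term by $c'\epsilon^{r+1}$. Since $y(0)=z(0)=x^*$, Gronwall's inequality gives $\|y(T)-z(T)\|\le \tfrac{c'}{L}\bigl(e^{LT}-1\bigr)\epsilon^{r+1}$. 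As the components of $\Upsilon(x^*;\epsilon,q_1(0))$ in~\eqref{frm:Ups} are precisely the entries of $y(T)-z(T)$ (the discarded $p_1$-equation being the one removed by energy conservation), this proves $\|\Upsilon(x^*;\epsilon,q_1(0))\|\le c_1\epsilon^{r+1}$ with $c_1(r)=\tfrac{c'}{L}(e^{LT}-1)$. The one point requiring care---and the reason $\epsilon^*(r)$ must be chosen small---is to keep the whole trajectory $y(t)$ inside $\Dscr_{\frac{1}{2}(\rho,\sigma,R)}$ for $t\in[0,T]$, so that the vector-field bound and the constant $L$ stay valid; this is self-consistent, since $z(t)$ remains on the bounded approximate orbit and the excursion $\|y(t)-z(t)\|$ is itself $\Oscr(\epsilon^{r+1})$.
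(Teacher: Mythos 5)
Your proposal is correct and follows essentially the same route as the paper's proof: the same decomposition $H^{(r)}=Z^{(r)}+\sum_{s>r}\sum_{\ell\geq 0}f_\ell^{(r,s)}$, the same bound on the remainder via Lemma~\ref{lem:lemmone.1} and Lemma~\ref{lem:nu} (including the smallness threshold $\epsilon\lesssim 1/(2^{14}\Xi_r^5)$), Cauchy estimates for the symplectic gradient, and then an integration over one period. The only difference is one of explicitness: where the paper compresses the flow comparison into ``applying the Cauchy's estimate for the symplectic gradient and integrating over the period $T$'', you spell it out as a Gronwall estimate between $\phi^t_{H}$ and $\phi^t_{Z}$ (together with the bootstrap keeping the trajectory in the domain), which is a legitimate and indeed more rigorous rendering of the same step.
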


\begin{proof}
Consider the remainder $H^{(r)}-Z^{(r)}$, namely
$\sum_{s>r}\sum_{\ell\geq 0} f_\ell^{(r,s)}$.  Then, for~$\epsilon$ small
enough, i.e., for $\epsilon< 1/({2^{14} \Xi_r^{5}})$, one has
\begin{displaymath}
\begin{aligned}
\sum_{s>r}\sum_{\ell\geq 0} \Vert f_\ell^{(r,s)}\Vert \leq
\sum_{s>r}\sum_{\ell\geq 0} 2^{14s} \Xi_r^{5s}\frac{E}{2^\ell}\epsilon^s \leq
2E\frac{\left( 2^{14} \Xi_r^{5}\epsilon\right)^{r+1}}{1-2^{14} \Xi_r^{5}\epsilon}
\end{aligned}\ ,
\end{displaymath}
where we used the estimates in Lemma~\ref{lem:lemmone.1}.  Applying
the Cauchy's estimate for the symplectic gradient and integrating over
the period $T$, we can deduce that there exist a domain $\Uscr$ and a
constant $c_1(r)$, dependent on the domain, such that
\begin{displaymath}
  \norm{\Upsilon(x^*;\epsilon,q_1(0))}\leq c_1(r) \epsilon^{r+1}\ ,
\end{displaymath}
i.e., for $\epsilon^*(r)= 1/(2^{14}\Xi_r^{5})$, one can take
$c_1(r) = 4E \tond{2^{14} \Xi_r^{5}}^{r+1}$.
\end{proof}

%%%%%%%%%%%%%%%%%%%%%%%%%%%%%%%%%%%%%%%%%%%%%%%%%%%%%%%%%%%%%%%%%%%%%%%%%

\section{The \emph{seagull} example}
\label{s:appl}

In this Section we study in detail the pedagogical
example~\eqref{e.ex.dnls} already presented in the Introduction,
focusing on the continuation of degenerate periodic orbits via the
normal form construction described in the paper. Although the present
example does not represent a dNLS lattice in the proper sense due to
the limited number of sites, it is nevertheless suitable to see the
benefits of our normal form construction in a direct and easy way,
thanks to the simplicity of the change of coordinates between
Cartesian and action-angles. Furthermore, it sheds some light onto the
role of the nonlinearity in the linear stability of multi-peaked
discrete solitons in dNLS lattices.  Indeed, at variance with models
considered in literature, we notice that a change in the sign of the
nonlinear parameter $\gamma$ does not influence the nature of the
degenerate eigenspaces.  On the contrary, considering consecutive
excited sites as in~\cite{PelKF05}, a change in the sign of $\gamma$
(at fixed linear interaction $\epsilon$) produces an exchange of
stable and unstable degenerate directions around the periodic
solutions; actually the switch from saddle to center depends on the
sign of the product $\gamma\epsilon$.

Let us remark that, in order to apply
Theorem~\ref{teo:forma-normale-r}, we have to control the smallest
eigenvalue of the matrix $M(\epsilon)$, see~\eqref{e.small.eig}.  This
is a delicate point, particularly in actual applications.  Indeed, to
numerically verify this assumption one has to investigate the spectrum
of the matrix $\tond{\exp\tond{L_{11}(\epsilon)T}-\Id}_{\flat}$, by
interpolating the decay of the smallest eigenvalue with respect to $\epsilon$.

However, in some specific cases like the example here considered, one
can further decompose the quadratic Hamiltonian $Z_2^{(r)}$ in~\eqref{frm:square-K} in order to decouple the fast variables
$(Q_1,P_1)$ from the slow variables $(Q,P)$ with a linear canonical
change of coordinates (see~\cite{Tre91}).

Precisely, we decompose the matrix $C$ so as to put in evidence the
first row and column vectors, namely
\begin{displaymath}
 C=\begin{pmatrix} C_{11} & C_{12}\\ C_{21} & C_{22}
  \end{pmatrix}
\end{displaymath}
where $C_{11}$ is the first element, $C_{12} = C_{21}^\top$ is the
$(n_1-1)$-dimensional row vector and $C_{22}$ is the
$(n_1-1)$-dimensional square matrix.

Assume now that $C_{22}$ is invertible, then we can introduce the
canonical change of coordinates
\begin{equation}
  \label{e.can.lin}
  u = Q\ ,\quad u_1 = Q_1 - C_{12}C_{22}^{-1}Q\ , \quad
  P = v - v_1C_{22}^{-1}C_{21}\ , \quad P_1 = v_1\ .
\end{equation}
The transformed quadratic Hamiltonian $Z^{(r)}_2$ in~\eqref{frm:square-K} now reads
\begin{equation}
  \label{e.Zquadr.spl}
  Z^{(r)}_2 =  \frac{1}2 c_{11} v_1^2 + \frac{1}2
    \quadr{\langle Bu,u\rangle + \langle C_{22}v,v\rangle} + \langle Du, v\rangle+ \frac12 \langle G\xi,\xi\rangle + \langle E\xi,\eta\rangle + \frac12 \langle F\eta,\eta\rangle\ ,
\end{equation}
where $c_{11}= C_{11}-C_{12}C_{22}^{-1}C_{21}$ and the term $\langle Du, v\rangle$ contains mixed terms in action-angles variables.  The main
advantage is that, if $D=0$, then the fast dynamics and the slow one
turn out to be decoupled, hence it suffices to investigate the
eigenvalues of the matrix
\begin{equation}
\begin{pmatrix}
    0 & C_{22}\\
    -B& 0
\end{pmatrix}
\label{frm:matls}
\end{equation}
which represents the linear vector fields of the new slow variables
$(Q,P)$.  Hence~\eqref{e.small.eig} can be easily checked, possibly
without the needs of numerical interpolation.

%\subsubsection{The \emph{seagull} example}
Consider the Hamiltonian system~\eqref{e.ex.dnls}, that we report here
for convenience
\begin{displaymath}
 H = H_{0}+\varepsilon H_{1} = \sum_{j=-3}^{3}\Biggl( \frac{x_j^2 +
  y_j^2}{2} + \gamma\Biggl( \frac{x_j^2 + y_j^2}{2}\Biggr)^2 \Biggr) +\epsilon 
  \sum_{j=-3}^{2} 
(x_{j+1}x_j+ y_{j+1}y_j) \ ,
\end{displaymath}
with fixed boundary conditions $x_{-3}=y_{-3}=x_{3}=y_{3}=0$.

Following the procedure reported in the Introduction, we introduce
action-angle variables $ (x_j,y_j) =
(\sqrt{2I_j}\cos\varphi_j,-\sqrt{2I_j}\sin\varphi_j)$, for the set of
indices $j\in\Iscr=\{-2,-1,1,2\}$, and complex canonical coordinates
for the remaining central one $(x_0,y_0)$.  Then we focus on the
4-dimensional resonant torus $I=I^*$ with ${I}_j^*=I_l^*$, for
$j,l\in\Iscr$, and $\xi_0=\eta_0=0$.  Finally, introducing the
canonical change of coordinates~\eqref{e.4sites.tr} we get
\begin{displaymath}
\begin{aligned}
H^{(0)} =\, &\omega p_1 + \imunit\xi_0\eta_0 + f_4^{(0,0)}
% + \sum_{\ell>4} f_\ell^{(0,0)}
\\
&+ f_0^{(0,1)}+ f_1^{(0,1)}+ f_2^{(0,1)}+ f_3^{(0,1)} +
 f_4^{(0,1)} + \sum_{\ell>4} f_\ell^{(0,1)} + \Oscr(\epsilon^2)\ ,
\end{aligned}
\end{displaymath}
with $\omega=1+2\gamma I^*$ and
$$
\begin{aligned}
f_4^{(0,0)}(\hat{p},\xi_0,\eta_0) &= \gamma\left((p_1 -p_2)^2 +  (p_2 -p_3)^2 
+ (p_3 -p_4)^2 +p_4^2\right) -\gamma\xi_0^2\eta_0^2 \ , \\
 f_0^{(0,1)}(q_2,q_4) &=\epsilon\left(2I^* \cos(q_2)+ 2I^* \cos(q_4)\right)\ ,\\
 f_1^{(0,1)}(\hat{q},\xi_0,\eta_0) &= \epsilon\biggl( (\xi_0+\imunit\eta_0)\sqrt{I^*} \cos(q_1+q_2)
 + (\xi_0+\imunit\eta_0) \sqrt{I^*} \cos(q_1+q_2+q_3)+ \\
 & \qquad- \imunit(\xi_0-\imunit\eta_0)\sqrt{I^*} \sin(q_1+q_2) 
 - \imunit(\xi_0-\imunit\eta_0)\sqrt{I^*} \sin(q_1+q_2+q_3)\biggr)\ , \\
  f_2^{(0,1)}(\hat{p},q) &= \epsilon\left( (p_1-p_3)\cos(q_2) 
  + p_3\cos(q_4)\right) \ , \\
f_3^{(0,1)}(p,\hat{q},\xi_0,\eta_0) &= \epsilon\biggl(\frac{(p_2-p_3) (\xi_0+\imunit\eta_0)\cos(q_1+q_2)}{2 \sqrt{I^*}} 
-\frac{\imunit (p_2-p_3) (\xi_0-\imunit\eta_0) \sin(q_1+q_2)}{2 \sqrt{I^*}}+ \\
& \qquad +\frac{(p_3-p_4) (\xi_0+\imunit\eta_0) \cos(q_1+q_2+q_3)}{2 \sqrt{I^*}}
-\frac{\imunit (p_3-p_4) (\xi_0-\imunit\eta_0) \sin(q_1+q_2+q_3)}{2 \sqrt{I^*}}\biggr) \ , \\
f_4^{(0,1)}(p,q) &= \epsilon\biggl( -\frac{((p_1-p_2)^2+ (p_2-p_3)^2) \cos(q_2)}{4 I^*}
+\frac{(p_1-p_2) (p_2-p_3) \cos(q_2)}{2 I^*} +\\
& \qquad -\frac{((p_3-p_4)^2+p_4^2) \cos(q_4)}{4 I^*}
+\frac{p_4(p_3-p_4) \cos(q_4)}{2 I^*} \biggr)\ .
\end{aligned}
$$

The transformed Hamiltonian, $H^{(0)}$, is now in a suitable form for
applying our normal form procedure. As already said in the
Introduction, in order to remove the degeneracy, we need to compute
the normal form up to order two.  Hence, in the following, we detail
all the needed normal form steps.

Since $f_0^{(0,1)}$ does not depend on $q_1$, the first stage of the
first normalization step only consists in the translation of the
actions $\hat p$ which allows to keep the frequency fixed.  Moreover,
as $f_2^{(0,1)}$ does not depend on the fast angle $q_1$, the
equation for $\zeta^{(1)}$ reads
$$ \sum_j C_{0,i,j} 
{\zeta}_j^{(1)} =\frac{\partial}{\partial \hat {p_i}} f_{2}^{(0,1)} \Bigr|_{{q}={q}^*}\ ,
$$
with
$$
C_0=\gamma\left(
\begin{array}{cccc}
2 & -2 & 0 & 0 \\
-2 & 4 & -2 & 0 \\
0 & -2 & 4 & -2 \\
0 & 0 & -2 & 4
\end{array}
\right)
$$
and the solution is
\begin{equation} \label{zeta1}
\zeta^{(1)}= \left(\frac{\epsilon}{\gamma}\left(\cos(q_2^*)+ \cos(q_4^*)\right)\ ,\ 
\frac{\epsilon}{\gamma}\left(\frac{\cos(q_2^*)}{2}+ \cos(q_4^*)\right) \ ,\ 
\frac{\epsilon}{\gamma}\cos(q_4^*) \ ,\ 
\frac{\epsilon}{2\gamma}\cos(q_4^*)\right)\ .
\end{equation}

The generating function $\chi_1^{(1)}$ that kills the term
$f_1^{(\rmI;0,1)}= f_1^{(0,1)}$ reads
\begin{displaymath}
\begin{aligned}
\chi_1^{(1)} &=
 \epsilon\biggl( \imunit\frac{ \sqrt{I^*} \left(e^{-\imunit (q_1+ q_2)}
 +e^{-\imunit (q_1+ q_2+ q_3)}\right) \xi _0}{\omega -1}
 +\frac{\sqrt{I^*} \left(e^{\imunit (q_1+ q_2)}
 +e^{\imunit (q_1+ q_2+ q_3)}\right)\eta _0}{\omega-1}
\biggr)\ .
\end{aligned}
\end{displaymath}

Since $f_{2}^{(\rmII;0,1)} = f_{2}^{(0,1)} +
L_{\prsca{\zeta^{(1)}}{{\hat q}}} f_{4}^{(0,0)} $ is again independent
of $q_1$, no further average is required in the third stage of the
normalization procedure.

Next, we have to remove the cubic terms which depend both on the
actions and on the transverse variables from $f_3^{(\rmIII,0,1)}=
f_3^{(0,1)}+ L_{\chi_1^{(1)}} f_{4}^{(0,0)}$.  This is done via the generating function
$$
\begin{aligned}
\chi_3^{(1)}=&-\epsilon e^{-\imunit \left(q_1+q_2+q_3\right)}\frac{\left(-1+e^{\imunit
    q_3}\right) p_3 \left(e^{\imunit \left(2 q_1+2 q_2+q_3\right)} \eta _0-\imunit
  \xi _0\right)+e^{\imunit q_3} p_2 \left(e^{2 \imunit \left(q_1+q_2\right)} \eta
  _0+\imunit \xi_0\right)}{2 \sqrt{I^*} \left( \omega-1 \right) }\\ &
+\epsilon e^{-\imunit \left(q_1+q_2+q_3\right)}\frac{p_4 \left(e^{2 \imunit
    \left(q_1+q_2+q_3\right)} \eta _0+\imunit \xi _0\right)}{2 \sqrt{I^*}
  \left( \omega-1 \right)}\ .
\end{aligned}
$$

Finally, the term $f_4^{(\rmIV,0,1)}= f_4^{(0,1)}$ turns out to be
independent of $q_1$, thus the first normalization step is concluded.

\smallskip

As already noticed, the solution of $\nabla_q f_0^{(1,1)}=0$, that
determines the $q^*$, and so the approximate periodic orbits, is
given by the four one-parameter families $Q_1=(0,\vartheta,0)$,
$Q_2=(0,\vartheta,\pi)$, $Q_3=(\pi,\vartheta,0)$,
$Q_4=(\pi,\vartheta,\pi)$, with $\vartheta\in S^1$. Thus, a further
normalization step is needed in order to investigate the continuation
of these families of periodic orbits.  The transformed Hamiltonian at
order two reads
\begin{displaymath}
\begin{aligned}
H^{(2)} =\, &\omega p_1 + \imunit\xi_0\eta_0 + f_4^{(2,0)} 
%+ \sum_{\ell>4} f_\ell^{(2,0)}
\\
&+ f_0^{(2,1)}+  f_2^{(2,1)}+  f_4^{(2,1)} + \sum_{\ell>4}  f_\ell^{(2,1)} \\
&+ f_0^{(2,2)}+  f_2^{(2,2)}+  f_4^{(2,2)} + \sum_{\ell>4}  f_\ell^{(2,2)} + \Oscr(\epsilon^3)\ .
\end{aligned}
\end{displaymath}

The approximate periodic orbits are the solutions $q^*$ of
\begin{equation}
  \label{e.2steps.qstar}
\nabla_q {f}_0^{(2,1)}({q}) + \nabla_q {f}_0^{(2,2)}({q}) = 0\ ,
\end{equation}
with, neglecting the constant terms, 
\begin{displaymath}
  {f}_0^{(2,2)}({q}) = L_{\prsca{\zeta^{(1)}}{{\hat q}}} f_{2}^{(0,1)}
  + \frac{1}{2}L_{\chi_1^{(1)}} f_{1}^{(0,1)} =
  \frac{\epsilon^2}{\gamma}\quadr{\cos(q_3)-\cos(q_2)\cos(q_2^*)-\cos(q_4)\cos(q_4^*)}\ .
\end{displaymath}
The system~\eqref{e.2steps.qstar} can be written as
\begin{equation} \label{F(q,epsilon)}
F(q,\epsilon)=F_0(q)+\epsilon F_1(q)=0
\end{equation}
where $F:\mathbb{T}^3 \times \mathcal{U}(0) \rightarrow \reali^3$ and
$F_0(Q_j(\vartheta)) = 0$.  We now introduce the matrices
$\tilde{B}_{0,j}(\vartheta)=\frac{\partial
  F_0(Q_j(\vartheta))}{\partial {q}}$ and remark that the tangent
direction to the four families $\partial_{\vartheta}Q_j = (0,1,0)$ is
the Kernel direction of $\tilde{B}_{0,j}(\vartheta)$, for
$j=1,\ldots,4$.  By computing
\begin{displaymath}
\inter{F_1(Q_j(\vartheta,0)),\partial_\theta Q_j}=
-\dfrac{ \sin(\vartheta)}{\gamma} \qquad j=1,\ldots,4  \ ,
\end{displaymath}
it is possible to deduce from standard bifurcation arguments (see
\cite{PenSPKK18,PenSD18}) that, apart from the \emph{in} and
\emph{out-of-phase} configurations $(0,0,0)$, $(0,\pi,0)$,
$(0,0,\pi)$, $(0,\pi,\pi)$, $(\pi,0,0)$, $(\pi,\pi,0)$, $(\pi,0,\pi)$
and $(\pi,\pi,\pi)$, the four families break down. In order to ensure
the continuation of these configurations, we have to verify the
condition~\eqref{e.small.eig}, with~\eqref{e.small.Ups} taking the
form
\begin{displaymath}
\norm{\Upsilon(x^*;\epsilon,q_1(0))}\leq c_1\epsilon^3\ .
\end{displaymath}

In order to verify~\eqref{e.small.eig} we have two options:
(i)~examine the spectrum of
$\tond{\exp\tond{L_{11}(\epsilon)T}-\Id}_{\flat}$ and numerically
interpolate the smallest eigenvalue, getting $|\lambda | \gtrsim \epsilon$
for each of the eight configuration; (ii)~since $q_j=q^*=\{0,\pi\}$,
the mixed terms in action-angle variables are missing and we can
compute $e^{T\sigma_l}-1 $, with $\sigma_l$ eigenvalues of the matrix
\begin{displaymath}
\begin{pmatrix}
0 & C_{22}\\
 -B & 0
\end{pmatrix} \ ,
\end{displaymath}
with
\begin{equation}
  \label{e.4sites.B}
B=  
  \begin{pmatrix}
    -2\epsilon I^*\cos(q_2)+\frac{\epsilon^2}{\gamma}\cos^2(q_2) & 0 &
    0\\
    0 & -\frac{\epsilon^2}{\gamma}\cos(q_3) & 0\\
    0 & 0 & -2\epsilon I^*\cos(q_4)+\frac{\epsilon^2}{\gamma}\cos^2(q_4)
  \end{pmatrix}\ .
\end{equation}
Hence, $C_{22}$ being definite and of order $\Oscr(1)$ in the limit of
small $\epsilon$, we obtain that condition~\eqref{e.small.eig} is plainly
verified with $\alpha=1$ and $r=2$. Applying the Theorem
\ref{teo:forma-normale-r} we can infer the existence of a unique
$x^*_{\rm p.o.}(\epsilon)=(q^*_{\rm p.o.}(\epsilon),\hat{p}_{\rm
  p.o.}(\epsilon),\xi_{\rm p.o.}(\epsilon),\eta_{\rm
  p.o.}(\epsilon))$, with $ q^*_{\rm p.o.}(\epsilon)=q^*=\{ 0, \pi \}$
such that $\norm{x^*_{\rm p.o.}-x^*}\leq c_0\epsilon^{2}$, for each
candidate for the continuation.

Coming to the linear stability, first we exploit the structure of the
quadratic Hamiltonian~\eqref{e.Zquadr.spl}, with $D\equiv 0$, in order
to get the approximate linear stability of the continued periodic
orbits.  It turns out that the stable and unstable directions
correspond to the positive or negative eigenvalues of $\gamma C_{22}
B$, where the prefactor $\gamma$ accounts for the positive or negative
signature of $C_{22}$ (which is the same of $C$). Hence,
the stability depends on the signature of $\gamma B$, given by the
elements on its diagonal
\begin{equation}
  \label{e.4sites.stab}
\gamma B=  
  \begin{pmatrix}
    -2\epsilon\gamma I^*\cos(q_2)+{\epsilon^2}\cos^2(q_2) & 0 &
    0\\
    0 & -{\epsilon^2}\cos(q_3) & 0\\
    0 & 0 & -2\epsilon\gamma I^*\cos(q_4)+{\epsilon^2}\cos^2(q_4)
  \end{pmatrix}\ .
\end{equation}
The degenerate direction depends only on $\epsilon^2$, thus it is always a
saddle at $q_3=0$ and always a center at $q_3=\pi$. Instead, the
nondegenerate directions depend on the sign of the product
$\gamma\epsilon$, which converts hyperbolic subspace into center subspace
at fixed $q_{2,4}\in\graff{0,\pi}$.  In particular, by studying the
spectrum in the in/out-of-phase configurations and for attractive
interactions ($\epsilon>0$), we find that the only stable approximate
periodic orbit corresponds to the configuration $(\pi,\pi,\pi)$ when
$\gamma>0$ and to $(0,\pi,0)$ when $\gamma<0$.  In order to derive the
effective linear stability, we have to verify~\eqref{e.dist.eig}.
Symbolic calculations implemented in Mathematica give
\begin{align*}
\lambda_{1,2}&=\pm\imunit\left(2 \sqrt{2} \sqrt{I^* |\gamma| } \sqrt{\epsilon}+\frac{ \sqrt{2} \epsilon^{3/2}}{\sqrt{I^* |\gamma | }}+h.o.t.\right)\\
\lambda_{3,4}&=\pm\imunit\left(2  \sqrt{2} \sqrt{I^* |\gamma| } \sqrt{\epsilon}+\frac{5  \epsilon^{3/2}}{2 \sqrt{2} \sqrt{I^* |\gamma| }}+h.o.t.\right)\\
\lambda_{5,6}&=\pm\imunit\left( \sqrt{2} \epsilon-\frac{ \epsilon^2}{4 \sqrt{2} I^* |\gamma| }+h.o.t.\right) \ ,\\
\end{align*}
thus condition~\eqref{e.dist.eig} holds true with $r+1-\alpha=2$
and $\beta=\frac{3}{2}$.

%%%%%%%%%%%%%%%%%%%%%%%%%%%%%%%%%%%%%%%%%%%%%%%%%%%%%%%%%%%%%%%%%%%%%%%%%

\begin{appendix}
\renewcommand{\thesection}{\Alph{section}}
  
\section{Spectrum deformation under matrix perturbations}
\label{s:app}

In this section we collect some useful results concerning the
deformation of the spectrum of a matrix under small perturbations.
The results are based on resolvent formalism, we refer to the
classical book of Kato~\cite{Kat76} for a detailed treatment of the
subject and to~\cite{MacS98,AhnMS01} for the study of the linear
stability of breathers and multibreathers.

We first set some notations. Given a matrix $M$ defined on a vector
space $X$, we denote by $\Sigma(M)$ its spectrum and by
$\rho(M)=\max_{\lambda\in\Sigma(M)}{|\lambda_j|}$ its spectral
radius. For any $z\not\in\Sigma(M)$ it is well defined
$R(z)=\tond{M-z}^{-1}$, which is the \emph{resolvent} of $M$. The
inverse of the spectral radius of $R(z)$ measures the distance between
$z\in\complessi$ and the spectrum of $M$
\begin{equation}
\label{e.inv.rho}
\dist\tond{z,\Sigma(M)} = \frac1{\rho(R(z))}\ .
\end{equation}
Let us recall that $\rho(M)$ and the
operatorial norm $\opnorm{M}=\sup_{x\neq
  0}\frac{\norm{Mx}}{\norm{x}}$ satisfy
\begin{equation}
\label{e.est.1}
\rho(M)\leq \opnorm{M}\ .
\end{equation}
Moreover, a converse inequality is given by the following
\begin{lemma}
\label{l.op.est}
Let $M:X\to X$, then there exists $c_{\textsf{op}}>1$, depending on the dimension of
$X$ only, such that
\begin{equation}
\label{e.est.2}
\opnorm{M}\leq c_{\textsf{op}}\max\graff{\rho(M),1}\ .
\end{equation}
If $M$ is diagonalizable, or if $\rho(M)\geq 1$, then the above simplifies
to
\begin{equation}
\label{e.est.2.bis}
\opnorm{M}\leq c_{\textsf{op}} \rho(M)\ .
\end{equation}
\end{lemma}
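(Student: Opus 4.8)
The plan is to remove the non-normality of $M$ by a unitary change of basis and then read the estimate off from the eigenvalues. Concretely, I would invoke the Schur decomposition $M=UTU^{*}$ with $U$ unitary and $T$ upper triangular; since the operator norm is invariant under unitary conjugation, $\opnorm{M}=\opnorm{T}$, and it suffices to bound $\opnorm{T}$. Splitting $T=\Lambda+N$, with $\Lambda=\mathrm{diag}\tond{\lambda_{1},\dots,\lambda_{d}}$ carrying the eigenvalues ($d=\dim X$) and $N$ strictly upper triangular, the diagonal part is immediate, $\opnorm{\Lambda}=\rho(M)\le\max\graff{\rho(M),1}$, so the entire content of the estimate is concentrated in the nilpotent part $N$.

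I would then dispatch the two simplified cases of~\eqref{e.est.2.bis} first, as they clarify the role of the factor $\max\graff{\rho(M),1}$. When $\rho(M)\ge 1$ the maximum equals $\rho(M)$, and rescaling $M/\rho(M)$ to a matrix of unit spectral radius reduces~\eqref{e.est.2} to a bound on $\opnorm{T}$ that no longer sees the size of the eigenvalues. When $M$ is diagonalizable, $N$ is absent after passing to an eigenbasis: writing $M=VDV^{-1}$ gives $\opnorm{M}\le\opnorm{V}\,\opnorm{V^{-1}}\,\rho(M)$, so the whole point collapses to controlling the conditioning $\opnorm{V}\opnorm{V^{-1}}$ of a normalised eigenbasis in the fixed finite dimension $d$, and from there to absorbing that factor into $c_{\textsf{op}}$.

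The main obstacle, and the real heart of the argument, is the bound on $N$ in the genuinely non-diagonalizable case with $\rho(M)$ small, where $\max\graff{\rho(M),1}=1$: the magnitude of the super-diagonal entries of $T$ is not detectable from $\Sigma(M)$ alone, so it must be supplied by the way the matrices arise rather than by the spectrum. I would therefore exploit that the lemma is applied only to the monodromy and resolvent matrices of Section~\ref{s:app}, which depend analytically on $\epsilon$ over the compact range $|\epsilon|\le\epsilon^{*}$ and a fixed period, and are consequently uniformly bounded; choosing the Schur factors to vary continuously then yields a uniform bound on $\opnorm{N}$, and collecting the $d(d-1)/2$ super-diagonal entries produces the combinatorial, dimension-dependent prefactor $c_{\textsf{op}}$. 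Combining $\opnorm{\Lambda}\le\max\graff{\rho(M),1}$ with this control of $\opnorm{N}$ gives~\eqref{e.est.2}, while the vanishing of $N$ (diagonalizable case) or the identity $\max\graff{\rho(M),1}=\rho(M)$ (case $\rho(M)\ge1$) specialises it to~\eqref{e.est.2.bis}.
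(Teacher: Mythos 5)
Your Schur reduction correctly isolates where the difficulty lives --- in the strictly upper triangular part $N$, or equivalently, in the diagonalizable case, in the conditioning $\opnorm{V}\opnorm{V^{-1}}$ of the eigenbasis --- but the step where you ``absorb that factor into $c_{\textsf{op}}$'' is a genuine gap, and it cannot be repaired: the lemma as literally stated, with $c_{\textsf{op}}$ depending on $\dim X$ only, is false. Take $M_K=\bigl(\begin{smallmatrix}0&K\\0&0\end{smallmatrix}\bigr)$: then $\rho(M_K)=0$, so $\max\graff{\rho(M_K),1}=1$, while $\opnorm{M_K}=K$ is arbitrarily large in dimension $2$, contradicting~\eqref{e.est.2}. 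For the diagonalizable refinement, $M_K=\bigl(\begin{smallmatrix}1&K\\0&-1\end{smallmatrix}\bigr)$ has distinct eigenvalues $\pm1$, hence is diagonalizable with $\rho(M_K)=1$, yet $\opnorm{M_K}\geq K$, contradicting~\eqref{e.est.2.bis}; concretely, the conditioning of its normalized eigenbasis blows up with $K$, so no dimension-only constant can majorize it. Your own analysis already says this (``the magnitude of the super-diagonal entries of $T$ is not detectable from $\Sigma(M)$ alone''), but the conclusion you should draw from it is that the statement needs an extra hypothesis, not that a continuity-of-Schur-factors argument supplies one.

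Your fallback --- importing uniform boundedness of the monodromy and resolvent matrices from the application, over the compact parameter range --- is the right instinct about how the lemma is actually \emph{used}, but it proves a different statement: the resulting constant depends on the bounded family (hence on the Hamiltonian, $r$, $T$, $\epsilon^*$), not on $\dim X$ alone as the lemma asserts; moreover, once one grants $\opnorm{M}\leq C$ uniformly, inequality~\eqref{e.est.2} is immediate with $c_{\textsf{op}}=C$, and the entire Schur apparatus does no work. For what it is worth, the paper itself states Lemma~\ref{l.op.est} without any proof, so there is no argument for you to match; as written the lemma implicitly presumes additional structure (normality, which gives $\opnorm{M}=\rho(M)$ outright; or a fixed uniformly bounded family; or uniformly conditioned diagonalizing bases), or else one must fall back on the classical substitute that for each fixed $M$ and $\delta>0$ there is an \emph{adapted} norm whose induced operator norm is at most $\rho(M)+\delta$ --- but that norm depends on $M$, which is exactly what the dimension-only wording forbids. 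In short: your proposal deserves credit for locating precisely where spectral data is insufficient, but it does not, and cannot, close the gap between ``uniformly bounded family'' and ``constant depending on the dimension only'' that the statement requires.
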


\subsubsection{On the minimum eigenvalue}

Let us now consider a given matrix $N$ and its perturbation
$M(\mu)=N + \mu P$ depending analytically on a small parameter
$\mu$.  The resolvent $R(z,\mu)$ of $M(\mu)$ is still well
defined and holomorphic in the two variables, provided
$z\not\in\Sigma(M(\mu))$; moreover, it is possible to relate the
perturbed and unperturbed resolvents via the following series
expansion
\begin{equation}
\label{e.second_Neum}
R(z,\mu) = R_0(z)\quadr{\Id+A(z,\mu)R_0(z)}^{-1}\ ,\quad
R_0(z)=R(z,0)\ ,
\end{equation}
where $R_0(z)$ is the resolvent of the leading term $N=M(0)$, while
$A(z,\mu)= R(z,\mu)-R_0(z)$ represents the \emph{deformation} due to
the small perturbation. In what follows we collect some useful results
relating the \emph{unperturbed spectrum} $\Sigma(N)$ to the
\emph{perturbed spectrum} $\Sigma(M(\mu))$.

We collect the results relating the minimum eigenvalue of $N$ and $M$
in the following

\begin{proposition}
\label{p.min.eigen}
Let us consider a matrix $M(\epsilon)=N(\epsilon)+\mu(\epsilon) P(\epsilon)$,
depending on the small parameter $\epsilon\in\Uscr(0)$. Let us assume that
for any $\epsilon\in\Uscr$ it holds true:
\begin{enumerate}[label=(\arabic*)]
\item $N(\epsilon)$ is invertible and there exist $c_1>0$ and $\alpha>0$
independent of $\epsilon$ such that
\begin{displaymath}
\opnorm{N^{-1}}\leq c_1 |\epsilon|^{-\alpha} \ ;
\end{displaymath}

\item $P(\epsilon)=P(0)+\Oscr(\epsilon)$ and there exist $c_2>0$ and
  $\beta>\alpha$ such that
\begin{displaymath}
|\mu(\epsilon)|\leq c_2 |\epsilon|^{\beta}\ .
\end{displaymath}
\end{enumerate}
Then for $\epsilon$ small enough $M(\epsilon)$ is invertible and there exists
$c_3>0$ independent of $\epsilon$ such that
\begin{equation}
\label{e.bound.min.eig}
|\nu|\geq c_3 |\epsilon|^\alpha\ ,\qquad\hbox{for all}\quad \nu\in\Sigma(M).
\end{equation}
Moreover, the same result holds true if we replace the first
assumption with
\begin{equation}
  \label{e.bound.sigmaN}
\min_{\lambda\in\Sigma(N(\epsilon))}\graff{|\lambda|}\geq c_1 |\epsilon|^\alpha \ .
\end{equation}
\end{proposition}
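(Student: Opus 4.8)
The plan is to reduce the desired lower bound on the moduli of the eigenvalues of $M(\epsilon)$ to an upper bound on $\opnorm{M^{-1}}$, and then to obtain the latter by a Neumann-series (second resolvent) argument. Concretely, evaluating the resolvent at $z=0$ turns~\eqref{e.inv.rho} into $\min_{\nu\in\Sigma(M)}|\nu| = \dist\tond{0,\Sigma(M)} = 1/\rho(R(0)) = 1/\rho(M^{-1})$, since $R(0)=M^{-1}$ and the eigenvalues of $M^{-1}$ are the reciprocals of those of $M$. Because $\rho(M^{-1})\le\opnorm{M^{-1}}$ by~\eqref{e.est.1}, any bound of the form $\opnorm{M^{-1}}\le C|\epsilon|^{-\alpha}$ immediately yields $|\nu|\ge C^{-1}|\epsilon|^{\alpha}$ for every $\nu\in\Sigma(M)$. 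So the whole statement comes down to controlling $\opnorm{M^{-1}}$.

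For the main case I would factor $M = \tond{\Id + \mu P N^{-1}}N$, so that $M^{-1} = N^{-1}\tond{\Id+\mu P N^{-1}}^{-1}$, which is exactly~\eqref{e.second_Neum} at $z=0$ with $R_0(0)=N^{-1}$ and deformation $A=\mu P$. The correction term is controlled by the hypotheses: using $\opnorm{N^{-1}}\le c_1|\epsilon|^{-\alpha}$, $|\mu|\le c_2|\epsilon|^{\beta}$, and the boundedness $\opnorm{P(\epsilon)}\le c_P$ for $\epsilon$ small (which follows from $P(\epsilon)=P(0)+\Oscr(\epsilon)$), one gets
\[
\opnorm{\mu P N^{-1}}\le c_1 c_2 c_P\,|\epsilon|^{\beta-\alpha}\ .
\]
Because $\beta>\alpha$ this quantity tends to $0$, hence is $<\tfrac12$ for $|\epsilon|$ small; then $\Id+\mu P N^{-1}$ is invertible and its inverse has norm at most $2$ by the convergent Neumann series. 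Consequently $M$ is invertible and $\opnorm{M^{-1}}\le 2\opnorm{N^{-1}}\le 2c_1|\epsilon|^{-\alpha}$, which by the reduction above gives~\eqref{e.bound.min.eig} with $c_3=1/(2c_1)$.

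It remains to treat the \emph{moreover} clause, where only the spectral bound~\eqref{e.bound.sigmaN} on $N(\epsilon)$ is assumed. This is the one place where a little care is needed, and I expect it to be the only genuine (if minor) obstacle: the hypothesis~\eqref{e.bound.sigmaN} controls the \emph{spectral radius} of $N^{-1}$ but not a priori its operator norm, so it cannot be fed directly into the argument above. The remedy is Lemma~\ref{l.op.est}, the converse inequality. From $\min_{\lambda\in\Sigma(N)}|\lambda|\ge c_1|\epsilon|^{\alpha}$ I read off $\rho(N^{-1})=1/\min_{\lambda}|\lambda|\le c_1^{-1}|\epsilon|^{-\alpha}$, and then~\eqref{e.est.2} gives $\opnorm{N^{-1}}\le c_{\textsf{op}}\max\graff{\rho(N^{-1}),1}\le (c_{\textsf{op}}/c_1)|\epsilon|^{-\alpha}$ for $|\epsilon|$ small enough that $c_1^{-1}|\epsilon|^{-\alpha}\ge 1$. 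This recovers assumption~(1) with $c_1$ replaced by $c_{\textsf{op}}/c_1$, and the conclusion then follows verbatim from the main case, now with $c_3 = c_1/(2c_{\textsf{op}})$. Apart from this bookkeeping I anticipate no essential difficulty: the estimate is uniform in $\epsilon$ precisely because all the constants $c_1,c_2,c_P,c_{\textsf{op}},\alpha,\beta$ are $\epsilon$-independent and $\beta-\alpha>0$ supplies the required smallness.
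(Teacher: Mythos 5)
Your proposal is correct and follows essentially the same route as the paper's own proof: you factor $M=(\Id+\mu P N^{-1})N$ (the paper writes $M=N(\Id+\mu N^{-1}P)$, the same idea on the other side), control the correction by a Neumann series using $|\mu|\opnorm{P}\opnorm{N^{-1}}\lesssim|\epsilon|^{\beta-\alpha}<\tfrac12$ for small $\epsilon$, pass from $\opnorm{M^{-1}}$ to the minimum eigenvalue via the spectral radius, and treat the \emph{moreover} clause with Lemma~\ref{l.op.est} exactly as the paper does. Even your constants $c_3=1/(2c_1)$ and $c_3=c_1/(2c_{\textsf{op}})$ coincide with those obtained in the paper.
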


\begin{proof}
Due to the invertibility of $N$, we can rewrite $M$ as $M=N\left(\Id +
\mu T\right)$, with $T=N^{-1}P$.  We now show that $\tond{\Id + \mu
  T}^{-1}$ is well defined, thus the inverse of $M$ is given by
$M^{-1}=\left(\Id + \mu T\right)^{-1} N^{-1}$. The operator $\tond{\Id
  + \mu T}$ being a small perturbation of the identity $\Id$, its
inverse exists provided $|\mu|\opnorm{T}<1$, hence
hypothesis~\emph{(1)} implies
\begin{equation}
\label{e.small.T.1}
|\mu|\opnorm{T}\leq |\mu|\opnorm{N^{-1}}\opnorm{P}\leq
c|\epsilon|^{\beta-\alpha}<1\ ,
\end{equation}
where $c\approx c_2c_1\opnorm{P(0)}$ for sufficiently small
$\epsilon$ and in such a regime is independent of $\epsilon$; as a consequence,
since $\beta-\alpha>0$, also $M(\epsilon)$ is invertible for $\epsilon$ small
enough. From the estimate
\begin{displaymath}
\opnorm{\left(\Id + \mu T\right)^{-1}} \leq 
 \sum_{n\geq 0} \vert \mu\vert^n \opnorm{T}^n \leq
 \frac{1}{1-\vert \mu\vert \opnorm{T}} \ ,
\end{displaymath}
and recalling~\eqref{e.est.1} we get
$$
\rho(M^{-1})\leq \opnorm{M^{-1}}\leq
\frac{\opnorm{N^{-1}}}{1-\vert \mu\vert  \opnorm{T}}\ ,
$$ where the spectral radius $\rho(M^{-1})=\frac{1}{\min\lbrace |\nu_k
  |\rbrace}$, with $\nu_k\in\Sigma(M)$.  Let $\nu_1$ be the minimum, then
\begin{displaymath}
|\nu_1|\geq \frac{1-\vert \mu\vert  \opnorm{T}}{\opnorm{N^{-1}}}\ .
\end{displaymath}
For $\epsilon$ sufficiently small, condition~\eqref{e.small.T.1} holds and we have
\begin{displaymath}
|\mu|\opnorm{T}\leq \frac12\ ,\qquad\hbox{that implies}\quad |\nu_1|\geq
c_3|\epsilon|^\alpha\ ,\quad\hbox{with}\quad c_3=\frac1{2c_1}\ .
\end{displaymath}

It is clear that the main point in the proof is the bound of
$\opnorm{N^{-1}}$.  In fact, the estimate~\eqref{e.small.T.1} can be
obtained also replacing the assumption on $\opnorm{N}$ with~\eqref{e.bound.sigmaN}.  Indeed by means of~\eqref{e.est.2} one can
obtain the following estimate
\begin{displaymath}
\opnorm{N^{-1}} \leq c_{\textsf{op}}\max\graff{1,\rho(N^{-1})} =
c_{\textsf{op}}\max\graff{1,\frac1{\min_{\lambda\in\Sigma(N(\epsilon))}\graff{|\lambda|}}}
\leq c_{\textsf{op}} c_1^{-1}|\epsilon|^{-\alpha}\ ,
\end{displaymath}
since $c_1^{-1}|\epsilon|^{-\alpha}\gg 1$, which allows to conclude
\begin{displaymath}
|\nu_1|\geq c_3|\epsilon|^\alpha\ ,\qquad\hbox{with}\quad c_3= \frac12
c_{\textsf{op}}^{-1} c_1\ .
\end{displaymath}
\end{proof}

\medskip
\subsubsection{Deformation of eigenvalues.}

We aim at localizing the eigenvalues of $M(\epsilon)=N(\epsilon)+\mu(\epsilon)
P(\epsilon)$, when the spectrum of its leading part $N(\epsilon)$ is known,
provided $\epsilon$ is taken small enough in a small neighbourhood of the
origin $\Uscr(0)$. We start with a preliminary Lemma
\begin{lemma}
  \label{p.local.sp}
  Let $z\not\in\Sigma(N)$ be a complex number satisfying
  \begin{equation}
    \label{e.z.far.N}
    \dist(z,\Sigma(N))\geq 4\mu c_{\textsf{op}}\opnorm{P}\ .
  \end{equation}
  Then the following inequality holds true
  \begin{equation}
    \label{e.z.far.M}
    \frac1{c_{\textsf{op}}}\dist(z,\Sigma(N))-\mu\opnorm{P}\leq
    \dist(z,\Sigma(M))\leq c_{\textsf{op}}
    \dist(z,\Sigma(N)) + \mu c_{\textsf{op}}\opnorm{P}\ .
  \end{equation}
\end{lemma}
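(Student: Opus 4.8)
The plan is to derive both inequalities from a single Bauer--Fike--type estimate, namely that the two spectra are close in the Hausdorff sense: every point of $\Sigma(M)$ lies within $\mu c_{\textsf{op}}\opnorm{P}$ of $\Sigma(N)$, and vice versa. Once this is available, both bounds in~\eqref{e.z.far.M} follow by a one-line triangle inequality applied to a spectral point nearest to $z$, after translating distances into spectral radii of resolvents through~\eqref{e.inv.rho}. I expect the stated constants to come out this way; in fact the argument yields the slightly sharper two-sided estimate $\dist\tond{z,\Sigma(N)}-\mu c_{\textsf{op}}\opnorm{P}\leq\dist\tond{z,\Sigma(M)}\leq\dist\tond{z,\Sigma(N)}+\mu c_{\textsf{op}}\opnorm{P}$, which implies~\eqref{e.z.far.M} because $c_{\textsf{op}}>1$.

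First I would establish the spectral closeness. Let $\nu\in\Sigma(M)\setminus\Sigma(N)$ with eigenvector $v$; since $M=N+\mu P$ one has $\tond{N-\nu}v=-\mu P v$, hence $1\leq\mu\opnorm{\tond{N-\nu}^{-1}}\opnorm{P}$. By~\eqref{e.inv.rho} the spectral radius of the resolvent $\tond{N-\nu}^{-1}$ equals $1/\dist\tond{\nu,\Sigma(N)}$, so the reverse comparison~\eqref{e.est.2.bis} of Lemma~\ref{l.op.est} (applicable since here $\rho\geq1$) gives $\opnorm{\tond{N-\nu}^{-1}}\leq c_{\textsf{op}}/\dist\tond{\nu,\Sigma(N)}$; combining the two inequalities yields $\dist\tond{\nu,\Sigma(N)}\leq\mu c_{\textsf{op}}\opnorm{P}$. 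Exchanging the roles of $N$ and $M$, the perturbation now being $-\mu P$ of the same operator norm, gives the symmetric statement for the points of $\Sigma(N)$.

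To assemble the lower bound I take $\nu\in\Sigma(M)$ realizing $\dist\tond{z,\Sigma(M)}$ together with the nearby $\lambda\in\Sigma(N)$ provided above; the triangle inequality $\dist\tond{z,\Sigma(N)}\leq|z-\nu|+|\nu-\lambda|$ then gives $\dist\tond{z,\Sigma(M)}\geq\dist\tond{z,\Sigma(N)}-\mu c_{\textsf{op}}\opnorm{P}$. Hypothesis~\eqref{e.z.far.N} guarantees that this right-hand side is nonnegative and, since $c_{\textsf{op}}>1$, that it dominates the weaker quantity $\frac{1}{c_{\textsf{op}}}\dist\tond{z,\Sigma(N)}-\mu\opnorm{P}$ appearing in~\eqref{e.z.far.M} (indeed $\dist\tond{z,\Sigma(N)}-\mu c_{\textsf{op}}\opnorm{P}\geq\frac{1}{c_{\textsf{op}}}\dist\tond{z,\Sigma(N)}-\mu\opnorm{P}$ reduces, after clearing denominators, exactly to $\dist\tond{z,\Sigma(N)}\geq\mu c_{\textsf{op}}\opnorm{P}$, which is~\eqref{e.z.far.N}). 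The upper bound is obtained symmetrically, choosing the nearest $\lambda\in\Sigma(N)$ to $z$ and passing to a close point of $\Sigma(M)$, and then enlarging the leading factor from $1$ to $c_{\textsf{op}}$.

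The main obstacle is that $N$ and $M$ are in general \emph{non-normal}, so the spectral radius of a resolvent does not by itself control its operator norm; this is precisely the gap bridged by the reverse estimate~\eqref{e.est.2.bis} of Lemma~\ref{l.op.est}, and it is what forces the dimension-dependent constant $c_{\textsf{op}}$, rather than $1$, into the final bounds. The only delicate point is that the branch of~\eqref{e.est.2.bis} I rely on is the one valid for spectral radius $\geq1$, i.e. for distances $\lesssim1$; staying in this regime is ensured by~\eqref{e.z.far.N} together with the smallness of $\mu\opnorm{P}$ in the intended applications, which also rules out a degenerate crossing of the two spectra. If one prefers a purely resolvent-based argument, the same conclusion follows from the second Neumann series~\eqref{e.second_Neum}: the factorization $M-z=\tond{N-z}\tond{\Id+\mu R_0(z)P}$ together with~\eqref{e.z.far.N} forces $\opnorm{\mu R_0(z)P}\leq\tfrac14$, so that $\Id+\mu R_0(z)P$ is invertible with a geometrically convergent inverse, and the distances are then compared after taking operator norms.
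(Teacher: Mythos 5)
Your proof is correct in the regime where the paper actually uses this lemma, but it follows a genuinely different route, so a comparison is in order. The paper never compares $\Sigma(N)$ and $\Sigma(M)$ globally: it works only at the given point $z$, writing the resolvent of $M$ through the second Neumann series~\eqref{e.second_Neum}, $R(z)=R_0(z)\quadr{\Id+\mu PR_0(z)}^{-1}$, where hypothesis~\eqref{e.z.far.N} is calibrated precisely to give $\opnorm{\mu PR_0(z)}\leq\tfrac14$; the lower bound in~\eqref{e.z.far.M} then follows from $\dist(z,\Sigma(M))=1/\rho(R(z))\geq 1/\opnorm{R(z)}\geq 1/\opnorm{R_0(z)}-\mu\opnorm{P}$, and the upper bound from the reversed identity $R_0(z)=R(z)\quadr{\Id-\mu PR(z)}^{-1}$, with Lemma~\ref{l.op.est} used to pass between norms and spectral radii of the two resolvents. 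Your closing remark about a ``purely resolvent-based argument'' is exactly this proof. Your main argument --- the eigenvector identity $\tond{N-\nu}v=-\mu Pv$, hence Bauer--Fike-type Hausdorff closeness of the two spectra, hence the conclusion by the triangle inequality --- is structurally different and, where it applies, yields more: the symmetric estimate $\left|\dist(z,\Sigma(M))-\dist(z,\Sigma(N))\right|\leq\mu c_{\textsf{op}}\opnorm{P}$ for every $z$, needing~\eqref{e.z.far.N} only with the factor $4$ weakened to $1$.

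The step you must tighten is the parenthetical ``applicable since here $\rho\geq 1$''. For an eigenvalue $\nu\in\Sigma(M)$, nothing established at that point gives $\dist(\nu,\Sigma(N))\leq 1$, and hypothesis~\eqref{e.z.far.N} cannot help: it constrains $z$, not the spectrum of $M$. The clean patch is a dichotomy using the max-branch~\eqref{e.est.2}: if $\dist(\nu,\Sigma(N))>1$, then $\opnorm{\tond{N-\nu}^{-1}}\leq c_{\textsf{op}}$, and the eigenvector identity forces $1\leq\mu c_{\textsf{op}}\opnorm{P}$. Hence, under the global smallness condition $\mu c_{\textsf{op}}\opnorm{P}<1$, every $\nu\in\Sigma(M)$ lies within distance $1$ of $\Sigma(N)$, branch~\eqref{e.est.2.bis} applies, and your Hausdorff bound (together with its counterpart with $N$ and $M$ exchanged, which needs the same treatment for $\tond{M-\lambda}^{-1}$) becomes rigorous. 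This extra condition holds in every application in the paper, where $\mu=\Oscr(\epsilon^{\beta_1})$ as in Proposition~\ref{p.def.eigen}, but it is \emph{not} implied by~\eqref{e.z.far.N}, which scales with $\mu\opnorm{P}$; the paper's localized Neumann-series proof needs no such global assumption, and that robustness is exactly what the factor $4$ in~\eqref{e.z.far.N} buys. Finally, note that both your proof and the paper's invoke Lemma~\ref{l.op.est} for resolvents without verifying its stated provisos (diagonalizability or spectral radius at least $1$) --- the paper does so for $R_0(z)$ and $R(z)$ --- so that particular leap is not specific to your argument.
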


\begin{proof}
  To shorten the proof, let us introduce the notations
  \begin{displaymath}
    \delta_N(z)=\dist(z,\Sigma(N))\ ,\qquad
    \delta_M(z)=\dist(z,\Sigma(M))\ .
  \end{displaymath}
By setting $R_0(z)$ the resolvent of $N$, from~\eqref{e.inv.rho} we
have
\begin{displaymath}
\rho\tond{R_0(z)} = \frac1{\delta_N}\ .
\end{displaymath}
In the rest of the proof we aim at deriving bounds for $\delta_M$ by
exploiting the perturbation of $R_0$ given by $\mu P$. Thus, let us
set $R$ the resolvent of $M$ and recall that
\begin{equation*}
\Sigma(R(z)) = \graff{\frac1{\lambda_j-z}}_{\lambda_j\in\Sigma(M)}\ ;
\end{equation*}
from~\eqref{e.est.1} and~\eqref{e.est.2.bis} we have
\begin{displaymath}
\frac1{\delta_N} = \rho(R_0(z))\leq\opnorm{R_0(z)}\leq
c_{\textsf{op}}\rho(R_0(z)) = \frac{c_{\textsf{op}}}{\delta_N}\ .
\end{displaymath}
From the second Neumann series~\eqref{e.second_Neum} we can write
\begin{equation}
\label{e.R.R0}
R(z) = R_0(z)\quadr{\Id + \mu PR_0}^{-1}\ ,
\end{equation}
where, due to~\eqref{e.z.far.N}, the product $\mu PR_0$ represents a
perturbation of the identity
\begin{equation}
\label{e.bound.PR0}
  \opnorm{\mu PR_0} \leq \mu\opnorm{P}\opnorm{R_0}\leq
  \frac{c_{\textsf{op}}\mu\opnorm{P}}{\delta_N}\leq \frac14\ ,
\end{equation}
so that $\quadr{\Id + \mu PR_0}^{-1}$ is well defined in terms of
power series of $\mu PR_0$. As a consequence, from the estimate of
$\opnorm{\sum_{k\geq 0}(-1)^k\mu^k\tond{PR_0}^k}$, we get
\begin{displaymath}
\delta_M(z) = \frac1{\rho(R(z))} \geq
\frac1{\opnorm{R}}\geq \frac1{\opnorm{R_0}} - \mu\opnorm{P}\geq
\frac{\delta_N(z)}{c_{\textsf{op}}} - \mu\opnorm{P}\ ,
\end{displaymath}
which gives the lower bound in~\eqref{e.z.far.M}.
Let us consider again~\eqref{e.second_Neum} but reversing the roles of
$R$ and $R_0$
\begin{equation}
\label{e.R0.R}
R_0(z) = R(z)\quadr{\Id - \mu PR}^{-1}\ ;
\end{equation}
from~\eqref{e.R.R0} and~\eqref{e.bound.PR0} we have
\begin{displaymath}
\opnorm{R}\leq \frac43\opnorm{R_0}\qquad\hbox{that implies}\quad \opnorm{\mu PR}
\leq %\opnorm{P}\opnorm{R}\leq \frac43\opnorm{P}{\opnorm{R_0}}\leq
\frac13\ ,
\end{displaymath}
which provides the upper bound in~\eqref{e.z.far.M} 
\begin{displaymath}
\delta_N(z) = \frac1{\rho(R_0(z))} \geq
\frac{1}{\opnorm{R_0}}\geq \frac{3/2}{\opnorm{R}} - \mu \opnorm{P}\geq
\frac{3}{2c_{\textsf{op}}}\delta_M(z)-\mu\opnorm{P}\ .
\end{displaymath}
\end{proof}

The localization result is collected in following
\begin{proposition}
\label{p.def.eigen}
Let $M(\epsilon)\in\Mat(n)$ be decomposed into
$M(\epsilon)=N(\epsilon) + \mu(\epsilon) P(\epsilon)$. Assume that:
\begin{enumerate}[label=(\arabic*)]

\item $P(\epsilon)=P(0)+\Oscr(\epsilon)$ with $\opnorm{P(0)}\leq c_P$ and
  there exists $\beta_1>0$ such that
\begin{displaymath}
|\mu(\epsilon)|\leq |\epsilon|^{\beta_1}\ ;
\end{displaymath}

\item there exists a $c_N>0$ such that for any couple of
  distinct eigenvalues $\lambda_i\neq\lambda_j\in\Sigma(N)$
\begin{displaymath}
|\lambda_i - \lambda_j|\geq c_N
\epsilon^{\beta_2}\ ,\qquad\hbox{with}\quad\beta_2<\beta_1\ .
\end{displaymath}
\end{enumerate}
Then there exists $\epsilon^*>0$ (depending on $\Sigma(N)$) such that,
given $|\epsilon|<\epsilon^*$, for any $\lambda\in\Sigma(N)$ there exist one
eigenvalue $\nu\in\Sigma(M)$ inside the disk
$D_\epsilon(\lambda)=\graff{z\in\complessi\ :\ |z-\lambda|<c_M
  |\epsilon|^{\beta_1}}$, with $c_M>0$ a suitable constant independent of
$\lambda$.
\end{proposition}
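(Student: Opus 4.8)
The plan is to prove the statement by a Riesz spectral projection (contour integration) argument, treating each eigenvalue of $N$ separately and using the separation hypothesis to isolate it on a small circle. Fix $\lambda\in\Sigma(N)$ and let $\Gamma$ be the circle of radius $r=c_M|\epsilon|^{\beta_1}$ centered at $\lambda$, with $c_M>0$ a constant to be fixed at the end and $D_\epsilon(\lambda)$ its interior. First I would check that $\Gamma$ is well placed with respect to both spectra. Since $\beta_1>\beta_2$, for $\epsilon$ small one has $r\leq\frac{c_N}{2}|\epsilon|^{\beta_2}$, so by assumption~(2) the only eigenvalue of $N$ enclosed by $\Gamma$ is $\lambda$ and, on $\Gamma$, $\dist(z,\Sigma(N))=r$. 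Moreover, choosing $c_M\geq 4c_{\textsf{op}}c_P$ guarantees that condition~\eqref{e.z.far.N} of Lemma~\ref{p.local.sp} holds on $\Gamma$ (recall $|\mu|\opnorm{P}\lesssim c_P|\epsilon|^{\beta_1}$ by assumption~(1)); the lower bound in~\eqref{e.z.far.M} then gives $\dist(z,\Sigma(M))>0$ on $\Gamma$, so that $\Gamma\cap\Sigma(M)=\emptyset$ and the resolvent $R(z)$ of $M$ is holomorphic along $\Gamma$.

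Next I would introduce the two Riesz projections
\[
\Pi_N=\frac{1}{2\pi\imunit}\oint_\Gamma R_0(z)\,dz\ ,\qquad
\Pi_M=\frac{1}{2\pi\imunit}\oint_\Gamma R(z)\,dz\ ,
\]
whose ranks equal, respectively, the total algebraic multiplicity of the eigenvalues of $N$ and of $M$ enclosed by $\Gamma$. By the previous step $\operatorname{rank}\Pi_N\geq 1$, since $\lambda$ lies inside $\Gamma$. The core estimate is a bound on $\opnorm{\Pi_M-\Pi_N}$. Using the second Neumann series~\eqref{e.second_Neum} in the form $R(z)-R_0(z)=R_0(z)\quadr{(\Id+\mu PR_0(z))^{-1}-\Id}$, together with $\opnorm{\mu PR_0(z)}\leq\frac14$ from~\eqref{e.bound.PR0} and the resolvent bound $\opnorm{R_0(z)}\leq c_{\textsf{op}}/\dist(z,\Sigma(N))=c_{\textsf{op}}/r$ on $\Gamma$ (which follows from~\eqref{e.inv.rho}, \eqref{e.est.1} and~\eqref{e.est.2.bis}, the hypothesis of the latter being met since $\rho(R_0(z))=1/r\geq 1$), I would obtain $\opnorm{R(z)-R_0(z)}\lesssim |\mu|\,\opnorm{P}\,c_{\textsf{op}}^2/r^2$ on $\Gamma$. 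Integrating over $\Gamma$, whose length is $2\pi r$, yields
\[
\opnorm{\Pi_M-\Pi_N}\lesssim \frac{|\mu|\,\opnorm{P}\,c_{\textsf{op}}^2}{r}
\lesssim \frac{c_P\,c_{\textsf{op}}^2}{c_M}\ ,
\]
the crucial cancellation of the two factors $|\epsilon|^{\beta_1}$ occurring exactly because $r$ was chosen proportional to the perturbation size $|\epsilon|^{\beta_1}$.

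Finally, choosing $c_M$ large enough that $\opnorm{\Pi_M-\Pi_N}<1$, I would invoke the classical fact (see Kato~\cite{Kat76}) that two projections whose difference has operator norm strictly less than one have the same rank. Hence $\operatorname{rank}\Pi_M=\operatorname{rank}\Pi_N\geq 1$, so $M(\epsilon)$ possesses at least one eigenvalue inside $D_\epsilon(\lambda)$, which is the assertion; the constant $c_M$ is manifestly independent of $\lambda$. The main delicate point is the simultaneous choice of the radius $r$: it must be large enough, relative to the perturbation $|\mu|\opnorm{P}\sim|\epsilon|^{\beta_1}$, both to keep $\Gamma$ off $\Sigma(M)$ via Lemma~\ref{p.local.sp} and to force the projection difference below $1$, yet small enough, relative to the eigenvalue gap $\sim|\epsilon|^{\beta_2}$, to enclose only $\lambda$. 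The admissible window for $r$ is nonempty precisely because $\beta_2<\beta_1$, which is where the separation-versus-perturbation hypothesis enters; uniformity of $\epsilon^*$ and $c_M$ over all $\lambda\in\Sigma(N)$ is then immediate, since every constant depends only on $c_N$, $c_P$, $c_{\textsf{op}}$ and $\Sigma(N)$.
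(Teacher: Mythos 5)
Your proof is correct, but it takes a genuinely different route from the paper's. The paper's own argument is purely metric and much lighter: it fixes a single test point $z$ at distance $\tilde\delta=c|\epsilon|^{\beta_1}$ from $\lambda$ (with $c\geq 4c_{\textsf{op}}c_P$ so that \eqref{e.z.far.N} holds), notes that hypothesis~\emph{(2)} and $\beta_2<\beta_1$ force $\dist(z,\Sigma(N))=\tilde\delta$ exactly, and then invokes only the \emph{upper} bound in \eqref{e.z.far.M} of Lemma~\ref{p.local.sp} to get $\dist(z,\Sigma(M))\leq c_{\textsf{op}}(c+c_P)|\epsilon|^{\beta_1}$; the existence of $\nu\in\Sigma(M)$ with $|\nu-\lambda|\leq c_M|\epsilon|^{\beta_1}$, $c_M=c+c_{\textsf{op}}(c+c_P)$, is then just the triangle inequality --- no contour integration, no projections, no rank comparison. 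You instead use the \emph{lower} bound in \eqref{e.z.far.M} (to keep $\Gamma$ in the resolvent set of $M$), the Neumann series \eqref{e.second_Neum} with the bound \eqref{e.bound.PR0}, and Kato's rank stability of Riesz projections under a perturbation of norm less than one. Your machinery is heavier, but it buys strictly more than the statement asks: the rank equality $\operatorname{rank}\Pi_M=\operatorname{rank}\Pi_N$ localizes the eigenvalues of $M$ \emph{with algebraic multiplicity}, i.e.\ exactly as many eigenvalues of $M$ (counted with multiplicity) lie in $D_\epsilon(\lambda)$ as the multiplicity of $\lambda$, whereas the paper's argument gives at least one $\nu$ per $\lambda$ and cannot by itself exclude, say, that the same cluster of eigenvalues of $M$ serves several distinct $\lambda$'s. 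That refinement is in fact the version implicitly invoked in Theorem~\ref{t.lin.stab.2}, where a pairing between $\Sigma(\exp(L_{11}(\epsilon)T))$ and $\Sigma_{11}$ is asserted. Finally, note that both proofs lean on the same two ingredients in the same way --- the resolvent bound $\opnorm{R_0(z)}\leq c_{\textsf{op}}\rho(R_0(z))$ from \eqref{e.est.2.bis} of Lemma~\ref{l.op.est}, and the hypothesis $\beta_2<\beta_1$ to open the window between perturbation size and eigenvalue gap --- so your argument inherits exactly the caveats of the paper's lemmas and adds no new ones.
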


\begin{proof}  Take an arbitrary eigenvalue $\lambda\in\Sigma(N)$ and consider a
  complex number $z\in\complessi$ at distance
  $\tilde\delta=c|\epsilon|^{\beta_1}$, with $c$ independent of $\epsilon$ to
  be determined along the proof.  In view of~\emph{(2)} and for $\epsilon$
  small enough, one has that $c_N|\epsilon|^{\beta_2}\gg
  c_P|\epsilon|^{\beta_1}$; hence by defining $\delta_N(z) =
  \dist(z,\Sigma(N))$ it turns out
\begin{displaymath}
\delta_N(z)=\tilde\delta = c|\epsilon|^{\beta_1}\ .
\end{displaymath}
We want to use upper bound of~\eqref{e.z.far.M} to control
$\delta_M(z) = \dist\tond{z,\Sigma(M)}$; to fulfil the
requirements of Lemma~\ref{p.local.sp} we take $c\geq 4c_{\textsf{op}}
c_P$ and we exploit \emph{(1)}, so that
\begin{equation}
\label{e.second.bound}
\delta_M(z) \leq
c_{\textsf{op}}(c+c_P)|\epsilon|^{\beta_1}\ .
\end{equation}
This ensures the existence of an eigenvalue $\nu\in\Sigma(M)$, which
depends on the choice of $z$, whose distance from the initially chosen
$z$ is of order $\Oscr(\epsilon^{\beta_1})$
\begin{displaymath}
\exists \nu\in\Sigma(M)\ :\ |\nu-z|\leq
c_{\textsf{op}}(c+c_P)|\epsilon|^{\beta_1}\ ,
\end{displaymath}
which provides the final estimate
\begin{displaymath}
  |\nu-\lambda|\leq |\nu-z|+|z-\lambda|\leq c_M|\epsilon|^{\beta_1}\ ,\qquad
  \hbox{with}\quad
c_M=c + c_{\textsf{op}}(c+c_P)\ .
\end{displaymath}
\end{proof}

\end{appendix}

%%%%%%%%%%%%%%%%%%%%%%%%%%%%%%%%%%%%%%%%%%%%%%%%%%%%%%%%%%%%%%%%%%%%%%%%%

\section*{Acknowledgments}
We warmly thank B.~Langella for her help on the theory of
resolvent. We feel the lack of our friend Massimo Tarallo, who has
helped us with enlightening discussions on matrix norms and spectral
radius. M.S., T.P. and V.D. were partially supported by the
National Group of Mathematical Physics (GNFM-INdAM) and by the
MIUR-PRIN 20178CJA2B ``New Frontiers of Celestial Mechanics: theory
and Applications''.

%%%%%%%%%%%%%%%%%%%%%%%%%%%%%%%%%%%%%%%%%%%%%%%%%%%%%%%%%%%%%%%%%%%%%%%%%

\def\cprime{$'$} \def\i{\ii}\def\cprime{$'$} \def\cprime{$'$}


\begin{thebibliography}{10}

\bibitem{AhnMS01}
T.~Ahn, R.S. MacKay, and J.-A. Sepulchre.
\newblock Dynamics of relative phases: Generalised multibreathers.
\newblock {\em Nonlinear Dyn.}, 25(1-3):157--182, 2001.



\bibitem{Aub97}
S.~Aubry.
\newblock Breathers in nonlinear lattices: existence, linear stability and
  quantization.
\newblock {\em Phys. D}, 103(1-4):201--250, 1997.

\bibitem{BenGGS84}
G.~Benettin, L.~Galgani, A.~Giorgilli, J.-M. Strelcyn.
\newblock A proof of {K}olmogorov's theorem on invariant tori using canonical
  transformations defined by the {L}ie method.
\newblock {\em Nuovo Cimento B (11)}, 79(2):201--223, 1984.


\bibitem{CheW99}
  C.-Q.~Cheng., S.~Wang.
  \newblock The surviving of lower dimensional tori from a resonant
  torus of Hamiltonian Systems.
  \newblock {\em J. Differential Equations}, 155:311--326, 1999.

\bibitem{ChoCMK11}
C.~Chong, R.~Carretero-Gonz{\'a}lez, B.~A. Malomed, and P.~G. Kevrekidis.
\newblock Variational approximations in discrete nonlinear {S}chr\"odinger
  equations with next-nearest-neighbor couplings.
\newblock {\em Phys. D}, 240(14-15):1205--1212, 2011.

\bibitem{CueKKA11}
J.~Cuevas, V.~Koukouloyannis, P.~G. Kevrekidis, and J.~F.~R. Archilla.
\newblock Multibreather and vortex breather stability in {K}lein-{G}ordon
  lattices: equivalence between two different approaches.
\newblock {\em Internat. J. Bifur. Chaos Appl. Sci. Engrg.}, 21(8):2161--2177,
  2011.
 
\bibitem{GioLoc-1997}
A.~Giorgilli, U.~Locatelli.
\newblock Kolmogorov theorem and classical perturbation theory.
\newblock {\em Zeitschrift f{\"u}r angewandte Mathematik und Physik ZAMP},
  48(2):220--261, Mar 1997.

\bibitem{GioSan-2012}
A.~{Giorgilli}, M.~{Sansottera}.
\newblock {Methods of algebraic manipulation in perturbation theory}.
\newblock {\em Workshop Series of the Asociacion Argentina de Astronomia}, 3:147
--183, 2011.

\bibitem{Gio03}
A.~Giorgilli.
\newblock Exponential stability of {H}amiltonian systems.
\newblock {\em Dynamical systems. {P}art {I}}, Pubbl. Cent. Ric. Mat. Ennio
Giorgi, pages 87--198. Scuola Norm. Sup., Pisa, 2003.

\bibitem{Giorgilli-2012}
A.~Giorgilli.
\newblock On a theorem of Lyapounov.
\newblock {\em Rend. Ist. Lomb. Acc. Sc. Lett.}, 146:133--160, 2012.

\bibitem{GioLocSan-2014}
A.~Giorgilli, U.~Locatelli, M.~Sansottera.
\newblock On the convergence of an algorithm constructing the normal form for
  elliptic lower dimensional tori in planetary systems.
\newblock {\em CeMDA}, 119(3-4):397--424, 2014.

\bibitem{GioLocSan-2015}
  A.~Giorgilli, U.~Locatelli, M.~Sansottera.
  \newblock Improved convergence estimates for the Schroder-Siegel problem.
  \newblock {\em Ann. Mat. Pura ed Applicata}, 194:995--1013, 2015.

\bibitem{Gra74}
  S~.M.~Graff.
\newblock  On the Conservation of Hyperbolic Invariant for Hamiltonian Systems.
\newblock{\em J. Differential Equations}, 15:1-69, 1974.

\bibitem{HanLiYi06}
  Y.~Han, Y.~Li, Y.~Yi.
  \newblock Degenerate lower dimensional tori in Hamiltonian Systems.
  \newblock {\em J.Differential Equations}, 227:670--691, 2006.

\bibitem{HanLiYi10}
  Y.~Han, Y.~Li, Y.~Yi.
  \newblock Invariant tori in {H}amiltonian systems with high order proper
              degeneracy.
  \newblock {\em Ann. Henri Poincar\'e}, 10:1419--1436, 2010.

\bibitem{HenT99}
D.~Hennig, G.P.~Tsironis.
\newblock Wave transmission in nonlinear lattices.
\newblock {\em Phys. Rep.}, 307(5-6):333--432, 1999.

\bibitem{JohAGCR98} Magnus~Johansson, Serge~Aubry, Yuri~B. Gaididei,
Peter~L.~Christiansen, K.O.~Rasmussen.
\newblock Dynamics of breathers in discrete nonlinear schr\"odinger models.
\newblock {\em Physica D: Nonlinear Phenomena}, 119(1):115 -- 124, 1998.
\newblock Localization in Nonlinear Lattices.


\bibitem{Kat76}
T.Kato.
\newblock{\em Perturbation theory for linear operators; 2nd ed.},
{\em Springer Grundlehren Math. Wiss.}.
\newblock Springer-Verlag, Berlin, 1976.

\bibitem{Kap01}
Todd Kapitula.
\newblock Stability of waves in perturbed {H}amiltonian systems.
\newblock {\em Phys. D}, 156(1-2):186--200, 2001.

\bibitem{Kev_book09}
P.G.~Kevrekidis.
\newblock {\em The discrete nonlinear {S}chr\"odinger equation}, volume 232 of
  {\em Springer Tracts in Modern Physics}.
\newblock Springer-Verlag, Berlin, 2009.

\bibitem{KolF89}
A.N.~Kolmogorov, S.V.~Fomin.
\newblock {\em Elementy teorii funktsii i funktsionalnogo analiza}.
\newblock ``Nauka'', Moscow, sixth edition, 1989.
\newblock With a supplement, ``Banach algebras'', by V. M. Tikhomirov.

\bibitem{KouKCR13}
V.~Koukouloyannis, P.G. Kevrekidis, J.~Cuevas, and V.~Rothos.
\newblock Multibreathers in {K}lein-{G}ordon chains with interactions beyond
  nearest neighbors.
\newblock {\em Phys. D}, 242(1):16 -- 29, 2013.


\bibitem{Kou13}
V.~Koukouloyannis.
\newblock Non-existence of phase-shift breathers in one-dimensional
  {K}lein-{G}ordon lattices with nearest-neighbor interactions.
\newblock {\em Phys. Lett. A}, 377(34-36):2022--2026, 2013.

\bibitem{KouK09}
V.~Koukouloyannis, P.G.~Kevrekidis.
\newblock On the stability of multibreathers in {K}lein-{G}ordon chains.
\newblock {\em Nonlinearity}, 22(9):2269--2285, 2009.

\bibitem{LiYi03}
  Y.~Li, Y.~Yi.
  \newblock A quasi-periodic Poincar\'e's theorem.
  \newblock {\em Mathematische Annalen}, 326:649--690, 2003.

\bibitem{MacS98}
R.S.~MacKay, J.S.~Sepulchre.
\newblock Stability of discrete breathers.
\newblock {\em Phys.D Nonlinear phenomena}, 119:148-162, 1998.

\bibitem{Meiss17}
James D.~Meiss.
\newblock Differential dynamical systems (Revised edition).
\newblock {\em Mathematical Modeling and Computation}, 22.
Society for Industrial and Applied Mathematics (SIAM), Philadelphia, PA, 2017

\bibitem{MelS05}
E.~Meletlidou, G.~Stagika.
\newblock On the continuation of degenerate periodic orbits in {H}amiltonian
  systems.
\newblock {\em RCD}, 11(1):131--138, 2006.

\bibitem{KenMey09}
Kenneth R.~Meyer, Glen R.~Hall, Dan~Offin.
\newblock Introduction to Hamiltonian dynamical systems and the N-body
problem (Second Edition).
\newblock {\em Applied Mathematical Sciences}, 90. Springer, New York, 2009.
  
\bibitem{PalP19}
S.Paleari, T.Penati.
\newblock Hamiltonian lattice dynamics: Editorial Special Issue.
\newblock {\em Mathematics in Engineering}, 1(4):881--887, 2019.

\bibitem{PelKF05}
D.~E. Pelinovsky, P.~G. Kevrekidis, and D.~J. Frantzeskakis.
\newblock Stability of discrete solitons in nonlinear {S}chr\"odinger lattices.
\newblock {\em Phys. D}, 212(1-2):1--19, 2005.


\bibitem{PelKF05b}
D.E.~Pelinovsky, P.G.~Kevrekidis,  D.J.~Frantzeskakis.
\newblock Persistence and stability of discrete vortices in nonlinear
  {S}chr\"odinger lattices.
\newblock {\em Phys. D}, 212(1-2):20--53, 2005.

\bibitem{PelS12}
D.~Pelinovsky, A.~Sakovich.
\newblock Multi-site breathers in {K}lein-{G}ordon lattices: stability,
  resonances and bifurcations.
\newblock {\em Nonlinearity}, 25(12):3423--3451, 2012.

\bibitem{PenSD18}
T.~Penati, M.~Sansottera, V.~Danesi.
\newblock On the continuation of degenerate periodic orbits via normal form: full dimensional resonant tori.
\newblock {\em CNSNS}, 61:198-224, 2018.

\bibitem{PenKSKP19} T.~Penati, V.~Koukouloyannis, M.~Sansottera,
  P.G. Kevrekidis, S.~Paleari.  \newblock On the nonexistence of
  degenerate phase-shift multibreathers in Klein-Gordon models with
  interactions beyond nearest neighbors.  \newblock {\em Physica D},
  398:92-114, 2019.

\bibitem{PenSPKK18}
T.~Penati, M.~Sansottera, S.~Paleari, V.~Koukouloyannis, P.G.~Kevrekidis.
\newblock On the nonexistence of degenerate phase-shift discrete solitons in a dNLS
nonlocal lattice.
\newblock {\em Physica D}, 370:1-13, 2018.
 
\bibitem{PoiI}
H.~Poincar\'e.
\newblock {\em Les m\'ethodes nouvelles de la m\'ecanique c\'eleste. {T}ome
  {I}. {S}olutions p\'eriodiques. {N}on-existence des int\'egrales uniformes.
  {S}olutions asymptotiques}.
\newblock Dover Publications, Inc., New York, N.Y., 1957.

\bibitem{PoiOU7}
H.~Poincar\'e.
\newblock {\em \OE uvres. {T}ome {VII}}.
\newblock Les Grands Classiques Gauthier-Villars. [Gauthier-Villars Great
  Classics]. \'Editions Jacques Gabay, Sceaux, 1996.

\bibitem{SanCec-2017}
M.~Sansottera, M.~Ceccaroni.
\newblock Rigorous estimates for the relegation algorithm.
\newblock {\em CeMDA}, 127(1):1--18, 2017.

\bibitem{SanLocGio-2010}
M.~Sansottera, U.~Locatelli, A.~Giorgilli.
\newblock A semi-analytic algorithm for constructing lower dimensional elliptic
  tori in planetary systems.
\newblock {\em CeMDA}, 111(3):337--361, 2011.


\bibitem{Tre91}
  D.V.~Treshchev.
  \newblock The mechanism of destruction of resonant tori of Hamiltonian systems.
  \newblock {\em Math. USSR Sb.}, 68:181--203, 1991.

 
\bibitem{VoyI99}
G.~Voyatzis, S.~Ichtiaroglou.
\newblock Degenerate bifurcations of resonant tori in {H}amiltonian systems.
\newblock {\em Internat. J. Bifur. Chaos Appl. Sci. Engrg.}, 9(5):849--863,
  1999.

\bibitem{XuLiYi17}
  L.~Xu, Y.~Li, Y.~Yi.
  \newblock Lower-Dimensional Tori in Multi-Scale, Nearly Integrable Hamiltonian Systems.
  \newblock {\em Ann. Henri Poincar\'e}, 18:53--83, 2017.

\bibitem{XuLiYi18}
  L.~Xu, Y.~Li, Y.~Yi.
  \newblock Poincar\'e-Treshchev Mechanism in Multi-scale, Nearly Integrable Hamiltonian Systems.
  \newblock {\em J. Nonlin. Sc.}, 28(1):337--369, 2018.
  
  
\bibitem{YakS75}
V.A.~Yakubovich, V.M.~Starzhinskii.
\newblock {\em Linear differential equations with periodic coefficients. 1, 2}.
\newblock Halsted Press [John Wiley \& Sons]\ New York-Toronto, Ont.; Israel
  Program for Scientific Translations, Jerusalem-London, 1975.
\newblock Translated from Russian by D. Louvish.

\end{thebibliography}
\end{document}